\newtheorem{theorem}{Theorem}
\newtheorem{assumption}[theorem]{Assumption}
\newtheorem{lemma}[theorem]{Lemma}
\newcommand{\be}{\begin{equation}}
\newcommand{\ee}{\end{equation}}
\newcommand{\bee}{\begin{equation*}}
\newcommand{\eee}{\end{equation*}}
\newcommand{\beaa}{\begin{eqnarray*}}
\newcommand{\eeaa}{\end{eqnarray*}}
\newcommand{\Prob}{\mathbb{P}}
\newcommand{\Rn}{\mathbb{R}}
\newcommand{\ovec}{\operatorname{vec}}
\newcommand{\fracp}[2]{\frac{\partial{#1}}{\partial{#2}}}
\newcommand{\Expe}{\mathbb{E}}
\begin{document}

\twocolumn[
\icmltitle{Sketchy Empirical Natural Gradient Methods for Deep Learning}

% It is OKAY to include author information, even for blind
% submissions: the style file will automatically remove it for you
% unless you've provided the [accepted] option to the icml2021
% package.

% List of affiliations: The first argument should be a (short)
% identifier you will use later to specify author affiliations
% Academic affiliations should list Department, University, City, Region, Country
% Industry affiliations should list Company, City, Region, Country

% You can specify symbols, otherwise they are numbered in order.
% Ideally, you should not use this facility. Affiliations will be numbered
% in order of appearance and this is the preferred way.
\icmlsetsymbol{equal}{*}

\begin{icmlauthorlist}
\icmlauthor{Minghan Yang}{af1,af2}
\icmlauthor{Dong Xu}{af1,af2}
\icmlauthor{Zaiwen Wen}{af2,af3,af4}
\icmlauthor{Mengyun Chen}{af5}
\icmlauthor{Pengxiang Xu}{af6}
%\icmlauthor{Tateu H.~Yasehe}{ed,to,goo}
%\icmlauthor{Aaoeu Iasoh}{goo}
%\icmlauthor{Buiui Eueu}{ed}
%\icmlauthor{Aeuia Zzzz}{ed}
%\icmlauthor{Bieea C.~Yyyy}{to,goo}
%\icmlauthor{Teoau Xxxx}{ed}
%\icmlauthor{Eee Pppp}{ed}
\end{icmlauthorlist}

\icmlaffiliation{af1}{School of Mathematical Sciences, Peking University, China}
\icmlaffiliation{af2}{Beijing International Center for Mathematical Research, Peking University, China}
\icmlaffiliation{af3}{Center for Data Science, Peking University, China}
\icmlaffiliation{af4}{National Engineering Laboratory for Big Data Analysis and Applications, Peking University, China}
\icmlaffiliation{af5}{Huawei Technologies Co. Ltd, China}
\icmlaffiliation{af6}{Peng Cheng Laboratory, Shenzhen, China}

\icmlcorrespondingauthor{Zaiwen Wen}{wenzw@pku.edu.cn}
%\icmlcorrespondingauthor{Minghan Yang}{yangminghan@pku.edu.cn}

% You may provide any keywords that you
% find helpful for describing your paper; these are used to populate
% the "keywords" metadata in the PDF but will not be shown in the document
\icmlkeywords{Machine Learning, ICML}

\vskip 0.3in
]

% this must go after the closing bracket ] following \twocolumn[ ...

% This command actually creates the footnote in the first column
% listing the affiliations and the copyright notice.
% The command takes one argument, which is text to display at the start of the footnote.
% The \icmlEqualContribution command is standard text for equal contribution.
% Remove it (just {}) if you do not need this facility.

\printAffiliationsAndNotice{}  % leave blank if no need to mention equal contribution
%\printAffiliationsAndNotice{\icmlEqualContribution} % otherwise use the standard text.

\begin{abstract}
In this paper, we develop an efficient sketchy empirical natural gradient method (SENG) for large-scale deep learning problems. The empirical Fisher information matrix is usually low-rank since the sampling is only practical on a small amount of data at each iteration. Although the corresponding natural gradient direction lies in a small subspace, both the computational cost and memory requirement are still not tractable due to the high dimensionality. We design randomized techniques for different neural network structures to resolve these challenges. For layers with a reasonable dimension, sketching can be performed on a regularized least squares subproblem. Otherwise, since the gradient is a vectorization of the product between two matrices, we apply sketching on the low-rank approximations of these matrices to compute the most expensive parts. A distributed version of SENG is also developed for extremely large-scale applications. Global convergence to stationary points is established under some mild assumptions and a fast linear convergence is analyzed under the neural tangent kernel (NTK) case. Extensive experiments on convolutional neural networks show the competitiveness of SENG compared with the state-of-the-art methods. On the task ResNet50 with ImageNet-1k, SENG achieves 75.9\% Top-1 testing accuracy within 41 epochs. Experiments on the distributed large-batch training show that the scaling efficiency is quite reasonable. \end{abstract}

% In the unusual situation where you want a paper to appear in the
% references without citing it in the main text, use \nocite

%\begin{figure}
%\centering
%\subfigure[Total Time]{
%    \begin{tikzpicture}[scale=0.2]
%    \pgfplotstableread[row sep=\\]{
%thread	DCRBR  idea\\
%4	1  1\\
%8	1.8311  2 \\
%16	3.4467   4\\
%32	6.9559  8\\
%}\mydata
%
%\begin{axis}[
%    grid,
%    width=\textwidth,
%    height=0.66\textwidth,
%    legend pos=north west,
%    xtick=data,
%    ylabel={Acceleration rate},
%    xlabel={Number of GPUs}
%    ]
%    \addplot[thick,mark=square,draw=red!90!black] table[x=thread,y=DCRBR]{\mydata};
%
%    \addplot[color=black,dashed] table[x=thread,y=idea]{\mydata};
%    \legend{SENG}
%\end{axis}
%    \end{tikzpicture}
%    }
%    \subfigure[Per Epoch Time]{
%      \begin{tikzpicture}[scale=0.2]
%    \pgfplotstableread[row sep=\\]{
%thread	DCRBR  idea\\
%4	1  1\\
%8	1.8311  2 \\
%16	3.3626   4\\
%32	7.4644  8\\
%}\mydata
%
%\begin{axis}[
%    grid,
%    width=\textwidth,
%    height=0.66\textwidth,
%    legend pos=north west,
%    xtick=data,
%    ylabel={Acceleration rate},
%    xlabel={Number of GPUs}
%    ]
%    \addplot[thick,mark=square,draw=red!90!black] table[x=thread,y=DCRBR]{\mydata};
%
%    \addplot[color=black,dashed] table[x=thread,y=idea]{\mydata};
%    \legend{SENG}
%\end{axis}i
%    \end{tikzpicture}
%    }
%    \vspace{-2ex}
%    \caption{Acceleration Rate in Terms of Different Indexes.}
%    \vspace{-2ex}
%\end{figure}
\section{Introduction}
%1. Deep Learning problems arise in practice. Emphasis the importance.
Deep learning
%\cite{lecun2015deep, Goodfellow-et-al-2016, Sch15}
makes a breakthrough and holds great promise in many applications, e.g., machine translation, self-driving and etc. Developing practical deep learning optimization methods is an urgent need from end users.

The goal of deep learning is to find a fair good decision variable $\theta\in\Rn^n$ so that the output of the network $f(x,\theta) \in \Rn^m$ matches the true target $y$. Specifically, for a given dataset $\{x_i,y_i\}_{i=1}^N$, we consider the following empirical risk minimization problem:
\be
\label{finite-sum}
\min_{\theta \in \Rn^n}\Psi(\theta) = \frac{1}{N} \sum_{i=1}^N \psi_i(\theta)  = \frac{1}{N} \sum_{i=1}^N  \psi(y_i, x_i,\theta),
%=  \frac{1}{N} \sum_{i=1}^N  -\log\left(p_i(\theta)\right) ,
 \ee
where $\psi_i(\theta)= \psi(y_i, x_i,\theta)$ is the loss function and it is common to use the negative log probability loss $\psi(y_i,x_i,\theta)=-\log \left (p \left (y_i| f\left(x_i,\theta \right) \right ) \right )$, e.g., the mean squared error (MSE) or the CrossEntropy loss.
% $p_i(\theta)=p(y_i|x_i,\theta)$ and $p$ is a distribution learned by the current network. $\psi(x,y,\theta)= -\log (p(y|x,\theta))$:
% for short.

The basic and most popular optimization methods in deep learning are first-order type methods, such as SGD \cite{RobMon51}, Adam \cite{kingma2014adam}, and etc. They are easy to implement but suffer a slow convergence rate and generalization gap in distributed large-batch training \cite{KeskarMNST17,shallue2019measuring}. Second-order type methods enjoy better convergence properties and exhibit a good potential in distributed large-batch training \cite{9123671}, but suffer a high computational cost at each iteration. They leverage the curvature information in different ways. The natural gradient method \cite{amari1997neural} corrects the gradient according to the local KL-divergence surface.
%However, these algorithms have to compute/approximate the inverse of Hessian matrix of the neural network at each iteration. This is seldom acceptable due to high computational cost.
An online approximation to the natural gradient direction is used in the TONGA method \cite{roux2008topmoumoute}. The online Newton step algorithm \cite{hazan2007logarithmic} uses the empirical fisher information matrix (EFIM) and the authors analyze the convergence properties in the online learning setting. The Fisher Information Matrix (FIM) is integrated naturally with a practical Levenberg-Marquardt framework \cite{ren2019efficient} and the direction can be economically computed by using the Sherman-Morrison-Woodbury (SMW) formula.
%KFAC \cite{martens2015optimizing} and Shampoo \cite{gupta2018shampoo} have been proven successful in large-scale applications in \cite{35epochsKFAC} and \cite{anil2020second}.
The KFAC method \cite{martens2015optimizing} based on independence assumptions approximates the FIM by decomposing the large matrix into a Kronecker product between two smaller matrices each layer. A recursive block-diagonal approximation to the Gauss-Newton matrix is studied in \cite{practicalGN} and each block is Kronecker factored.
% and can be computed by a single backward pass.
% By using fast Hessian matrix-vector products, the Hessian-free method \cite{HF-DL} uses the conjugate-gradient method to compute the corresponding direction. The CURVEBALL method  \cite{CURVEBALL} takes the curvature information with two additional forward-mode automatic differentiation operations over the network.

Theoretical understanding of the second-order type methods for deep learning problems focuses on the natural gradient descent (NGD) methods. The authors in \cite{Bernacchia2018ExactNG} consider the deep linear networks and show the fast convergence of NGD. The properties of NGD for both shallow and deep nonlinear networks in the NTK regime are shown in \cite{zhang2019fast,Cai2019AGM,Karakida2020UnderstandingAF}.

In this paper, we develop a novel \textbf{Sketchy Empirical Natural Gradient (SENG)} method.
The EFIM is usually low-rank and thus the direction lies in a small subspace. However, the cost is not tractable due to the high dimensionality. Our SENG method utilizes randomized techniques to reduce the computational complexity and memory requirement. By using the SMW formula, it is easy to know that the search direction is actually a linear combination of the subsampled gradients where the coefficients are determined by a regularized least squares (LS) subproblem. For layers with a reasonable dimension, we construct a much smaller subproblem by sketching on the subsampled gradients. Otherwise, since the gradient is a vectorization of the product between two matrices, we first take low-rank approximations to these matrices and then use randomized algorithms to approximate the expensive operations. We further extend SENG to the distributed setting and propose suitable strategies to reduce both the communication and computational cost for extremely large-scale applications. Global convergence is established under some mild assumptions and  the linear convergence rate is proved for the NTK case. Numerical comparisons with the state-of-the-art methods demonstrate the competitiveness of our method on a few typical neural network architectures and datasets. On the task of training ResNet50 on ImageNet-1k, we show great improvement over the well-tuned SGD (with momentum) method. Experiments on large-batch training are investigated to show the good scaling efficiency and the great potential in practice.

 %Due to the curse of
%    dimensionality, calculating the corresponding direction is not tractable.
%The corresponding direction is located in a low-rank subspace and can be obtained by solving a high dimensional regularized least square problem.
%To make the method practical, we propose several randomized techniques for different neural network structures. %\begin{itemize}
%\item
%We propose a novel sketchy empirical natural gradient framework for large-scale deep learning problems and a global convergence is established under some standard assumptions.

\section{The Empirical Fisher Information Matrix}
%\subsection{Fisher Information Matrix}
%By using the chain-rule, the Hessian matrix of the objective function in (\ref{finite-sum}) can be derived as follows:
%\be\label{Hessian}
%\nabla^2 \Psi(\theta)=\frac{1}{N} \sum_{i=1}^N J_f^i  \nabla_f^2 \psi_i(\theta)(J_f^i)^\top+\frac{1}{N} \sum_{i=1}^N\sum_{k=1}^m \nabla_{f_i^k}
%\psi_i(\theta)\nabla_\theta^2f_i^k(\theta),
%\ee
%where $J_f^i=\nabla_\theta f(x_i,\theta)\in \mbR^{n\times m}$,
%$f_i^k(\theta)$ is the $k$-th component of $f_i(\theta)$ and $f_i(\theta):= f(x_i,\theta)$ .
%The first part in (\ref{Hessian}) is also called the generalized Gauss-Newton (GGN) matrix,
%which is positive semi-definite (PSD) if the loss function is convex.

The FIM of the loss in \eqref{finite-sum} is based on the distribution learned by the neural network \cite{martens2015optimizing, martens2020new} and is defined by:
%the Hessian matrix can be written as:
%\be
%\label{Hessian-2}
%\nabla^2 \Psi(\theta)=
%% \frac{1}{N}\sum_{i=1}^N \frac{\partial \psi_i(\theta)}{\partial \theta}\left( \frac{\partial \psi_i(\theta)}{\partial \theta}\right)^\top   -  \frac{1}{N}\sum_{i=1}^N \frac{1}{p(y_i\mid x_i,\theta)} \frac{\partial^2 p(y_i\mid x_i,\theta)}{\partial^2 \theta}.
%   \frac{1}{N}\sum_{i=1}^N  \nabla \psi_i(\theta)\nabla \psi_i(\theta)^\top -   \frac{1}{N}\sum_{i=1}^N \frac{1}{p_i(\theta)} \nabla^2 p_i(\theta).
%\ee
$
\frac{1}{N} \sum_{i=1}^N \Expe_{y\sim p(y|x_i,\theta)} \nabla_{\theta} \psi(y,x_i,\theta)\nabla_{\theta} \psi(y,x_i,\theta)^\top,
$
where the distribution $p(y|x,\theta)$ coincides with that used in the loss function. The subsampled FIM is also considered in the KFAC method \cite{martens2015optimizing} and is defined as
\[
\frac{1}{N} \sum_{i=1}^N \frac{1}{m_i}\sum_{j=1}^{m_i} \nabla_{\theta} \psi(y_i^j,x_i,\theta)\nabla_{\theta} \psi(y_i^j,x_i,\theta)^\top,
\]
where $y_i^j \sim p(y |x_i,\theta)$ and $m_i$ is the number of the samples for $x_i$. Computing the subsampled FIM needs more backward passes. Instead, we consider the EFIM and use its low-rank subsampled variant as our curvature matrix.  Given a mini-batch $S \subset \{1,2,\dots,N\}$ with a sample size $\varrho= |S|$, the subsampled EFIM can be represented as follows:
\be
\label{mb-EFIM}
M_S (\theta)=
 \frac{1}{\varrho}\sum_{i \in S}
%  \frac{\partial \psi_i(\theta)}{\partial \theta}\left( \frac{\partial \psi_i(\theta)}{\partial \theta}\right)^\top  ,
 \nabla \psi_i(\theta)\nabla \psi_i(\theta)^\top.
\ee
 The subsampled EFIM (\ref{mb-EFIM}) is a summation of a few rank-one matrices and is low-rank if $n\gg \varrho$. In practice, when the over-parameterized neural networks ($n \gg N$) are used, the deterministic EFIM ($\rho = N$) is still low-rank. Another important motivation is that the EFIM is a part of the Hessian matrix in certain cases e.g., for the negative log probability loss, 
$
 \nabla^2 \Psi(\theta) = \frac{1}{N}\sum_{i=1}^N\left ( \nabla \psi_i(\theta)\nabla \psi_i(\theta)^\top -   \frac{1}{p \left (y_i| f\left(x_i,\theta \right) \right)} \frac{\partial^2 p \left (y_i| f\left(x_i,\theta \right) \right )}{\partial \theta^2}\right ).$
% where $(f_i)_j$ is the $j$-th element of $f(x_i,\theta).$
% Note that the FIM is not low-rank except in some special case.
% Our method also applies to FIM when it is easy to get.
Considering a neural network with $L$ layers, the gradient with respect to (w.r.t.) the layer $l$ for a single sample $\{x_i, y_i\}$ can be obtained by the back-propagation process and written as a vectorization of matrix-matrix multiplication \cite{sun2019optimization}
 \be \label{grad-back}u_i^l(\theta) = \ovec( \hat G^l_i (\theta)(\hat A^l_i(\theta))^\top),\ee
  where $\hat{G}_i^l(\theta) \in \mathbb{R}^{n^l_G \times \kappa^l}$, $\hat{A}_i^l(\theta) \in \mathbb{R}^{n_A^l \times \kappa^l}$, $n^l = n^l_G\cdot n^l_A$ and $n^l$ is the number of parameters in the $l$-th layer. Note that $\hat G^l_i (\theta)$ and $\hat A^l_i(\theta)$ are computed by the backward and forward process, respectively. Hence, the per-sample gradient is a concatenation of $L$ sub-vectors:
\begin{align*} &\nabla \psi_i (\theta) : =u_i(\theta)\\ &= [(u_i^1(\theta))^\top,\dots, (u_i^l(\theta))^\top, \dots, (u_i^L(\theta))^\top]^\top \in \Rn^n .\end{align*}

% We denote the collection of gradients with respect to the sample set $S$ as $U_S (\theta) =\frac{1}{\sqrt{\varrho}}[u_1 (\theta), u_2 (\theta), \ldots, u_{\varrho} (\theta)] \in \Rn^{n\times \varrho}$ and  the collection of gradients with respect to $l$-th layer as $U^l_S (\theta) =\frac{1}{\sqrt{\varrho}} [u_1^l (\theta) ,u_2^l (\theta), \dots, u_{\varrho}^l (\theta)]\in \Rn^{n^l\times \varrho}$.
%
 Hence, the subsampled EFIM matrix $M_S (\theta)$ and its block diagonal part can be written as:
\be
\label{mb-EFIM2}
\begin{aligned}
M_{S}  (\theta)&= \frac{1}{\varrho} \sum_{i \in S} u_i (\theta) u_i (\theta)^\top = U_S (\theta)U_S (\theta)^\top, \\
%g_S (\theta)& =  \frac{1}{\varrho}\sum_{i\in S} u_i  (\theta).
\end{aligned}
\ee
and
\be
\label{mb-EFIM2}
\begin{aligned}
M^l_{S}  (\theta)&= \frac{1}{\varrho} \sum_{i \in S} u^l_i (\theta) u^l_i (\theta)^\top = U^l_S (\theta)U^l_S (\theta)^\top,
%&= \frac{1}{\varrho} \sum_{i \in S} \ovec( \hat G^l_i (\theta)(\hat A^l_i(\theta))^\top)
\end{aligned}
\ee
where $U_S (\theta) =\frac{1}{\sqrt{\varrho}}[u_1 (\theta), u_2 (\theta), \ldots, u_{\varrho} (\theta)] \in \Rn^{n\times \varrho}$
%To obtain the $M_S$, we can reuse per-sample gradients that is used to compute mini-batch gradients.
and $U^l_S (\theta) =\frac{1}{\sqrt{\varrho}} [u_1^l (\theta) ,u_2^l (\theta), \dots, u_{\varrho}^l (\theta)]\in \Rn^{n^l\times \varrho}$.

Note that the subscript $S$ and $\theta$ will be dropped if no confusion can arise. For example, we denote $ M_{S^k}^l(\theta_k)$ by $M_k^l$ at the point $\theta_k$. Throughout this paper, the layer number is expressed by the superscripts.
%We consider the basic subsampling scheme and other schemes can be also considered, such as \cite{johnson2013accelerating}.
%It is natural that neural network is composed of several layers. This leads to an idea that diagonal approximation to $M_S$ with each block to per layer:
%\be
%\label{mb-EFIM-block}
%M_{S}\approx   \tilde{M}_{S} :=\diag\left\{ M^1_{S},\dots,M^L_{S}\right \}= \diag\left\{  U^1_S(U^1_S)^\top,\dots,U^L_S(U^L_S)^\top \right\}.
%\ee

\section{The SENG Methods}
We first describe a second-order framework for the problem (\ref{finite-sum}). At the $k$-th iteration, a regularized quadratic minimization problem at the point $\theta_k$ is constructed as follows:
\be
\label{quadratic-model}
\min_d F_k(d)=\Psi_k + g_k^\top d + \frac{1}{2}d^\top (B_k+\lambda_k I) d,\ee
where $\Psi_k = \Psi(\theta_k)$, $g_k=g_{S_k}(\theta_k) = \frac{1}{|S_k|}\sum_{i\in S_k} u_i(\theta_k) $ is the mini-batch gradient, $B_k$ is an approximation to the Hessian matrix of $\Psi$ at $\theta_k$ and $\lambda_k$ is a regularization parameter to make $B_k + \lambda_k I$ positive definite. Note that the sample sets in the $B_k$ and $g_k$ can be different.
To reduce the computational cost, $B_k$ is designed to be block diagonal according to the network structure:\[B_k = \text{block-diag}\{ M^1_k,\dots M^L_k \}.\] Hence, $B_k$ is positive semi-definite. By solving the subproblem (\ref{quadratic-model}), we obtain $d_k :=[(d_k^1)^\top,\dots,(d_k^L)^\top]^\top$, where
\be \label{eq:dk} d_k^l =- (M_k^l+\lambda_k I)^{-1}g_k^l .\ee
Then we set $\theta_{k+1} = \theta_k + \alpha_k d_k$, where $\alpha_k$ is the step size.
Since the formulations of the directions $d_k^l$ for all layers are identical, we next only focus on a single layer by dropping the explicit layer indices and the iteration number $k$ if no confusion can arise. For example, $n^l$ and $U_k^l$ are written as $n$ and $U$ for simplicity in certain cases. %In that case, we can view the network as $1$-layer network.
\subsection{Direction in a Low-rank Subspace}
The main concern in \eqref{eq:dk} is the expensive computation of the inverse of $M$. However,
considering the low-rank structure of $M$ in \eqref{mb-EFIM2} and by the SMW formula, the direction actually is:
\be \begin{aligned}
\label{inversion} d
%&  - (\lambda I + U U^\top)^{-1}g\\
% =- \frac{1}{\lambda} g + \frac{1}{\lambda}  U\left((\lambda I + U^\top U)^{-1} (U^\top g )\right )
 = -  a g  +   aUb,
\end{aligned}
\ee
where $a=\frac{1}{\lambda}$ is a scalar and
\be \label{sol-ls-1}b=(\lambda I + U^\top U)^{-1} (U^\top g)\in \Rn^{\varrho}.\ee
Thus, the direction $d$ is located in a low-rank subspace spanned by $g$ and the column space of $U$. %If $g_{\hat{S}}$ is estimated by the same samples $S$ as $U_S$, e.g., $\hat{S} = S$, the direction will be reduced to
%\[
%%\begin{aligned}
%d = - \frac{a}{\varrho}U\textbf{1} + a Ub = - a U(\frac{1}{\varrho}\textbf{1} - b ),
%%\end{aligned}
%\]
%where $\textbf{1}\in \Rn^{\varrho}$ is the vector that all entries are 1.
%This means a second-order type direction is a proper combination of the gradient estimation.
%An interesting problem is how to choose good coefficients with a low cost.

This computation involves three basic operations: $U^\top z$, $U^\top U$ and $Uc$ for certain vectors $z$ and $c$. The main cost is the computation of the coefficients $b$, whose complexity is $O(\varrho^3 +\varrho^2 n)$. Since the batch size $\varrho$ is not large in many cases, the bottleneck in \eqref{sol-ls-1} is computing the product $U^\top U$ rather than computing the inverse of a $\varrho\times \varrho$ matrix.

The number of parameters $n$ for each layer is usually very large, see Table \ref{resnet-stat}. Therefore, both the computational cost and memory requirement of $U$ can not be ignored due to the high dimensionality. When $n$ is large and $n > (n_G + n_A ) \kappa$, e.g., for the cases IV, V, VI in Table \ref{resnet-stat}, it is better to store $\hat{G_i}$ and $\hat{A}_i$ other than the gradient $u_i$. Otherwise, e.g., for the cases I, II, III in Table \ref{resnet-stat}, we store $U$ explicitly and the computational cost can be reduced by sketching. We next present explicit and implicit methods for both cases by designing different sketching mechanisms in Sec \ref{SketchLS} and Sec \ref{ImplicitConstruction}, respectively.
\begin{table}%[t]%[!b]
\footnotesize
\centering
\caption{Statistics on a few typical layers of ResNet50. Conv means the convolutional layer while Fc means the fully-connected layer.}
%{\color{red} To be completed.}.}
{
\vspace{0.5ex}
\begin{tabular}{cccccc}
\hline \hline
Case & $n^l$ & $n_G^l$ &  $n_A^l$ & $\kappa^l$ & Type \\ \hline
I &9, 408 & 64 & 147 & 12, 544 &Conv \\
II &147, 456 & 128 & 1, 152 & 784 &Conv \\
III&524, 288 & 1, 024 & 512 & 196 &Conv \\
IV&1, 048, 576 & 2, 048 & 512 & 49 &Conv \\
V&2, 359, 296 & 512 & 4, 608 & 49 &Conv \\
VI&2, 049, 000 & 1, 000 & 2, 049 & 1 &Fc \\
\hline \hline
\end{tabular}
}
\label{resnet-stat}
\vspace{-2ex}
\end{table}

\subsection{Sketching on a Regularized LS Subproblem}
\label{SketchLS}
In this part, we use sketching  on a regularized least squares subproblem to reduce the computational cost of $b$. This is based on the key observation that the vector $b$ in (\ref{inversion}) is the solution of the following regularized LS problem:
\be\label{least-square-1}
\min_{b \in \Rn^{\varrho}} \|Ub - g\|^2 + \lambda\|b\|_2^2.
\ee
%From (\ref{least-square-1}), the direction $d$ in (\ref{inversion}) can be explained as the ``residual'' of $g$ in the space $U$ in some sense.
We use the sketching method to reduce the scale of the subproblem by denoting $\Xi = \Omega U$, $\xi = \Omega g$, where $\Omega \in \Rn^{q\times n}$ is a sketching matrix ($q\ll n$). Then, the subproblem is modified as:
\be\label{least-square-2}
\min_{\hat b\in \Rn^{\varrho}} \|\Omega U\hat b - \Omega g\|^2 + \lambda\|\hat b\|_2^2 = \|\Xi \hat b-\xi\|^2+\lambda\|\hat b\|^2.
\ee
The solution of the problem (\ref{least-square-2}) is
\be
\label{sol-ls-2}
\hat b = \left(\lambda I + \Xi^\top\Xi\right)^{-1}\Xi^\top\xi.\ee Hence, the direction is changed to:
\be
\begin{aligned}
\label{sketch-direc}
%\hat{d} &= - \frac{1}{\lambda} g + \frac{1}{\lambda}U\hat b.
\hat{d} &= -a g +aU\hat b.
   \end{aligned}
\ee
Replacing (\ref{least-square-1}) by (\ref{sketch-direc}), the complexity of calculating the coefficients is reduced from $O(\varrho^2n)$ to $O(\varrho^2 q)$.
%By analyzing the formula (\ref{sketch-direc}), we find the direction from sketching least minimization problem is :
%\be
%\begin{aligned}
%\label{sketch-direc2}
%\hat{d} =- (\lambda I  + UU^\top\Omega^\top \Omega)^{-1}g.
%   \end{aligned}
%\ee
%Compare with the primal format (\ref{inversion}), this is the result by replacing $B_k = UU^\top$ with $UU^\top\Omega^\top \Omega^\top$. $UU^\top\Omega^\top \Omega^\top$ has the same non-zero eigenvalues as $UU^\top$ and for any vector $v$, we have $v^\top UU^\top\Omega^\top \Omega^\top v\geq 0 $.

\textbf{Construction of $\Omega$.} We consider random row samplings where the rows of $\Omega_{i,:}, \; i=1,2,\dots, q,$ are sampled from \be\label{sketch-mat} \omega\leftarrow \frac{e_{i}^\top}{p_i}, i=1,2,\dots,n, \ee with/without replacement, where $\{p_j\}$ are given sampling probabilities. Two common strategies are listed below:
\begin{itemize}
\item Uniform sampling: All $p_i$ are the same and $p_i = \frac{1}{n},\; \forall i=1,2,\dots,n$.
\item Leverage score sampling: Each $p_i$ is proportional to the row norm squares $\|U_{i,:}\|_2^2$, where $U_{i,:}$ is the $i$-th row of $U$, that is, $p_i = \frac{\|U_{i,:}\|_2^2}{\sum_{i=1}^{n}\|U_{i,:}\|_2^2}$.
\end{itemize}
In fact, the kinds of the sketching methods do not have a strong influence on the performance. Moreover, sketching the matrix $U$ by row sampling is cheap.

\subsection{Implicit Computation and Storage of $U$ to Reduce Complexity}
\label{ImplicitConstruction}
Although the computational complexity is reduced by sketching, the memory consumption in (\ref{sketch-direc}) is still large in certain cases. In this part, we take advantage of the structure of the gradient to reduce the memory usage.
%The main idea is from the construction \eqref{grad-back} where the storage of $\hat{G_i}$ and $\hat{A}_i$ is smaller than that of the gradient $u_i$ if $n > (n_G + n_A ) \kappa$. %In this part, we consider the computation in one layer and drop the layer $l$ in the next.

 We first assume that each element of $U_S$ can be approximated as follows:
\be
\label{ApproximationGA}
u_i =\ovec\left( \hat{G_i}\hat{A}_i^\top\right) \approx \ovec\left(G_iA_i^\top \right)= \sum_{j=1}^r {a}_{ij} \otimes {g}_{ij} ,
\ee
where ${G}_i = [{g}_{i1}, \ldots, {g}_{ir}]\in \mathbb{R}^{n_G \times r}$, ${A}_i = [{a}_{i1}, \ldots, {a}_{ir}] \in \mathbb{R}^{n_A \times r}$ and $r \leq \kappa$. \footnote{When $\kappa$ is large enough, the approximation (\ref{ApproximationGA}) can be obtained by computing a partial SVD of $\hat G_i$ or $\hat A_i$ regarding to their sizes.}
%%, i.e.,
%%$$\hat G_i \approx N_{G_i}\Sigma_{G_i}V_{G_i},$$ where $N_{G_i}\in\Rn^{n_{G}\times r}, $ $\Sigma_{G_i}\in\Rn^{r\times r}$ and $V_{G_i}\in\Rn^{r\times \kappa}$. Hence, (\ref{ApproximationGA}) can be obtained by setting $G_i = N_{G_i}\Sigma_{G_i}$ and $A_i = \hat A_iV_{G_i}^\top$. These two matrices can be constructed in a similar fashion if a low-rank approximation to $\hat A_i$ is available. The partial SVD can be obtained by cheap randomized SVD methods. Note that the above scheme is not needed when $\kappa$ is small enough.

We next describe sketching methods to compute $U^\top z$, $U^\top U$ and $Uc$ for any vector $z,c$ by using $\{G_i,A_i\}$.
Denote \be
\begin{aligned}
\label{storeAG}\widetilde{A} &= [A_1,A_2,\dots,A_{\varrho}] \in \Rn^{n_A\times r\varrho},\\ \widetilde{G} &= [G_1,G_2,\dots,G_{\varrho}] \in \Rn^{n_G\times r\varrho}. \end{aligned}\ee
When $n_A$ or $n_G$ is large, we sample the rows of $\widetilde G$ and $\widetilde A$ with two sketching matrices $\Omega_G\in \Rn^{\zeta_G \times n_G}$ and $\Omega_A\in \Rn^{\zeta_A \times n_A}$. Hence, we obtain
\be
\label{sketchedAG}
\begin{aligned}
 \Xi_{\widetilde A }&= \Omega_A\widetilde{A} = [\Xi_{A_1},\dots,\Xi_{A_\varrho}], \\ \Xi_{\widetilde G} &= \Omega_G\widetilde{G} = [\Xi_{G_1},\dots,\Xi_{G_\varrho}],
 \end{aligned}
\ee
where $\Xi_{G_i} =  \Omega_GG_i $ and $\Xi_{A_i} = \Omega_A A_i$. When $n_A$ and $n_G$ are already small enough, we simply let $\Xi_{\widetilde A} =\widetilde A$ and $\Xi_{\widetilde G} = \widetilde G$.

\textbf{Computation of $U^\top z$}.  We sketch $\texttt{mat}(z)$ with the same sketching matrices and define $\Xi_{z} = \Omega_G\texttt{mat}(z)\Omega_A^\top$, where $\texttt{mat}(\cdot):\Rn^{n}\rightarrow \Rn^{n_G\times n_A}$. By the randomized techniques, the $i$-th element of $U^\top z$ can be approximated as:
   \begin{align}
  u_i^\top z &\approx \sum_{j=1}^r{g}_{ij}^\top\texttt{mat}(z){a}_{ij} \label{computeUtz}\\
&  \approx
  \texttt{elesum}\left( \left( \Xi_z^\top \Xi_{G_i}  \right)\odot (\Xi_{A_i})\right),   \label{implicit-sketching-2}
  \end{align}
%  \be
%  \label{Utopz}
%  \begin{aligned}
%  (u^i)^\top z &= \sum_{j=1}^r ({a}_{ij}^\top \otimes {g}_{ij}^\top)\texttt{mat}(z) =  \sum_{j=1}^r{g}_{ij}^\top\texttt{mat}(z){a}_{ij}\\
%  &=\texttt{elesum}\left( \left( \texttt{mat}(z)^\top G_i  \right)\odot A_i\right),
%  \end{aligned}
%  \ee
where $\odot$ is the Hadamard product and $\texttt{elesum}(X)=\sum_{ij} X_{ij}$. %Denote $\mathcal{A}_{\Omega_A,\Omega_G,U}(z) = [  \texttt{elesum}\left( \left( \Xi_z^\top \Xi_{G_1}  \right)\odot (\Xi_{A_1})\right),\dots,  \texttt{elesum}\left( \left( \Xi_z^\top \Xi_{G_\varrho}  \right)\odot (\Xi_{A_\varrho})\right)]^\top.$ Hence, $U^\top z \approx \mathcal{A}_{\Omega_A,\Omega_G,U}(z).$
  %  When the dimension is large, we use the sketching method to compute the expensive matrix-matrix computation in (\ref{Utopz}). By sampling the rows of $G_i$ and $A_i$ with sketching matrix $\Omega_G\in \Rn^{\zeta_G \times n_G}$ and $\Omega_A\in \Rn^{\zeta_A \times n_A}$, (\ref{Utopz}) is changed as:
%  \be
%  \label{implicit-sketching-2}
%  (u^i)^\top z\approx
%  \texttt{ele-sum}\left( \left( \Xi_z^\top \Omega_GG_i  \right)\odot (\Omega_A A_i)\right)
%  \ee
%  where $\Xi_z= \Omega_G\texttt{mat}(z)\Omega_A^\top$, $\Xi_{G_i} =  \Omega_GG_i $ and $\Xi_{A_i} = \Omega_A A_i$.

\textbf{Computation of $U^\top U$}.
%In the next, let us introduce how to compute $U^\top U$ which is crucial in (\ref{inversion}).
%Assume ${G}_i = [{g}_{i1}, \ldots, {g}_{ip}]$, ${A}_i = [{a}_{i1}, \ldots, {a}_{ip}]$, then
%\begin{align}u_i \approx \ovec({G}_i{A}_i^\top) &= \sum_{j=1}^p {a}_{ij} \otimes {g}_{ij}.\label{approx-1}
%\end{align}
Similarly, the $(i,j)$ element of $U^\top U$ is approximated as:
\begin{align}
(u_i)^\top u_j &\approx \left(\sum_{k=1}^r a_{ik} \otimes g_{ik}\right)^\top \left(\sum_{k=1}^r a_{jk}\otimes g_{jk}\right)\label{UtUapprox1} \\
%& = \texttt{elesum}\left ( (A_i^\top A_j) \odot (G_i^\top G_j)\right) \\
%= \sum_{s=1}^r\sum_{t=1}^r \left(a_{is}^\top a_{jt}\right)\cdot\left( g_{is}^\top g_{jt} \right)
%\label{UtUapprox1}\\
&\approx \texttt{elesum}\left ( (\Xi_{A_i}^\top \Xi_{A_j}) \odot (\Xi_{G_i}^\top \Xi_{G_j})\right)\label{UtUapprox2}
. \end{align}

\textbf{Computation of $Uc$}. To compute $Uc$, we first have to compute the per-sample gradient $u_i$ for all $i\in S$, multiply them with corresponding $c_i$ and finally sum them together:
\begin{align}
 Uc = \sum_{i\in S} u_ic_i \approx \sum_{i\in S} \ovec({G}_i{A}_i^\top) c_i
% = \sum_{i\in S} \ovec({G}_i \sqrt{|c_i |}\cdot \frac{c_i}{\sqrt{|c_i|}}{A}_i^\top)
 \label{approx-2}.
 \end{align}
The process (\ref{approx-2}) is expensive since it requires the computation of $\varrho$ matrix-matrix products and vectorizations as well. When the dimension $n$ is large, the computational cost is not tractable. Alternatively, we assume $A_i$ and $G_i$ are independent and approximate $Uc$ by $ \mathcal{C}_U(c)$, i.e., the product between the weighted averages of $G_i$ and $A_i$:
 \begin{align}
% Uc
% \approx
  \mathcal{C}_U(c)
 =\ovec \left( \sum_{i\in S} \sqrt{|c_i|}G_i\right) \left(\sum_{i\in S} \frac{c_i}{\sum_{i=1}^\varrho\sqrt{|c_i|}}A_i\right)^\top. \label{approx-3}
 \end{align}
Therefore, an explicit calculation and storage of $u_i$ is avoided, and only one matrix-matrix multiplication is needed.

Let $\mathcal{A}_{\Omega_A,\Omega_G,U}(z)$ and $\mathcal{B}_{\Omega_A,\Omega_G, U}$ be the approximation of $U^\top z$ by \eqref{implicit-sketching-2} and $U^\top U$ by \eqref{UtUapprox2}. Combining them with (\ref{approx-3}), the direction can be obtained as follows:%Combining (\ref{implicit-sketching-2}), (\ref{UtUapprox2}) and (\ref{approx-3}), the direction can be obtained as follows:
\be\label{implicit-dir} \hat{d} =-\frac{1}{\lambda} g + \mathcal{C}_U(\hat{b}),\ee
where $\hat b =(\mathcal{B}_{\Omega_A,\Omega_G,U}+\lambda I)^{-1}\mathcal{A}_{\Omega_A,\Omega_G,U}( g) $. Note that (\ref{implicit-sketching-2}) and (\ref{UtUapprox2}) are equal to $((\Omega_A \otimes \Omega_G)u_i)^\top((\Omega_A \otimes \Omega_G) z)$ and $((\Omega_A \otimes \Omega_G)u_i)^\top ((\Omega_A \otimes \Omega_G)u_j)$, respectively. Therefore, the computation of $\hat b $ here can be seen as a special case of (\ref{sol-ls-2}) by choosing $\Omega = (\Omega_A \otimes \Omega_G).$

\LinesNumberedHidden
\begin{algorithm2e}
%\small
\caption{The Computation of the Direction}
\label{alg:dir-compute}
\lnlset{alg:dir-compute}{1}{\textbf{INPUT:}} Curvature matrix update frequency $T$, threshold $\mathcal{T}$ and regularization $\lambda_k$.\\
\For{\text{layer} $l = 0,1,..., L$}{
\lIf{$n^l < \mathcal{T}$ }{\\
 \quad  \lIf{ $k$ mod T $=0$}{\\
   \qquad construct $U_k^l$ based on the sample set $\tilde{S}_k$
  }
 \quad \lElse{ \\
   \qquad set $U_k^l = U_{k-1}^l$
 }
  \quad Construct the sketching matrix $\Omega^l_k$ by (\ref{sketch-mat});\\
  \quad Solve the sketched least squares problem(\ref{least-square-2}); \\
  \quad Set $\hat d_k^l$ by (\ref{sketch-direc})
}
\lElse{ \\
  \quad \lIf{ $k$ mod T $=0$}{\\
    \qquad update $\widetilde A_{k}^l, \widetilde G_k^l$ by the set $\tilde{S}_k$ and (\ref{storeAG})}
\quad \lElse{\\
 \qquad set $\widetilde A_k^l = \widetilde A_{k-1}^l$ and $\widetilde G_k^l=\widetilde G_{k-1}^l$}
\quad {Construct $(\Omega_G)^l_k$ and $(\Omega_A)^l_k$ by (\ref{sketch-mat})};\\
\quad Compute $\hat d_k^l$ by (\ref{implicit-dir})
}
} % For
\lnlset{alg:prefim}{7}{\textbf{OUTPUT:} $\hat d_k :=[(\hat d_k^1)^\top,\dots,(\hat d_k^L)^\top]^\top$.}
\end{algorithm2e}
We summarize the computation of the direction $\hat d$ in Algorithm \ref{alg:dir-compute} and SENG in Algorithm \ref{alg:SENG}, respectively.

 \LinesNumberedHidden
\begin{algorithm2e}[ht]
%\small
\caption{The SENG Methods}
\label{alg:SENG}
\lnlset{alg:prefim}{1}{\textbf{INPUT:}} Initial parameter $\theta_1$, step size $\{\alpha_k\}$ and regularization $\{\lambda_k\}.$\\
\For{$k = 1,..., T$}{
\lnlset{alg:prefim}{2}{Choose the samples $S_k$ and compute $g_k$;} \\
%\lnlset{alg:prefim}{3}{Construct $U_{S_k}$ }; \\
\lnlset{alg:prefim}{3}{Compute the direction $\hat d_k$ by Algorithm \ref{alg:dir-compute};\\ }
\lnlset{alg:prefim}{4}{Set $\theta_{k+1} = \theta_k+\alpha_k  \hat d_k$;}
}
\lnlset{alg:prefim}{5}{\textbf{OUTPUT:}} $\theta_{T+1}$.
\end{algorithm2e}

\subsection{Computational Cost and Memory Consumption}
\label{computation-and-storage}
In this part, we summarize computational cost and memory consumption of our methods in Table \ref{ComStoCost}. We can observe that the randomized methods reduce both the computational cost and memory usage.

The SENG method avoids the inversion of high dimensional matrices. The size of the matrices equals the batch size. In practice, this number is often set to be 32, 64 or 256, which means that the computational cost of the matrix inversion is not a bottleneck. Instead, the matrix-matrix multiplications are required each iteration, but the cost is alleviated by our proposed sketching strategies. Note that the matrix inversion is the main computational cost in KFAC method. For example, the sizes of matrices to be inverted are 4, 608 and 512 for the case V in Table \ref{resnet-stat}. Since the cost of matrices multiplications is usually smaller than that of the matrix inversion, generally speaking, the SENG methods can show greater advantages in large neural networks where the matrix inversion takes up most of the computational time.

\section{Distributed SENG}
In this section, we extend our methods to the distributed setting. Assume that $\mathcal{M}$ parallel workers are available and the samples $S_k$ are allocated to the $\mathcal{M}$ workers evenly, that is, $S_k = [S_{k,1},S_{k,2},\dots,S_{k,\mathcal{M}}],$
where $S_{k,i}$ is the samples in the $i$-th worker. The corresponding mini-batch gradient and the collection of gradients are also computed and stored in different workers accordingly
\[
\begin{aligned}
g_k =\frac{1}{\mathcal{M}}\sum_{i=1}^{\mathcal{M}} g_{S_{k,i}}, \ \qquad
 U_k = [U_{S_{k,1}},\dots,U_{S_{k,\mathcal{M}}}].
\end{aligned}
\]
Then, the direction in (\ref{inversion}) can be rewritten as follows
\be
\label{inversion-dist}
\begin{aligned}
d_k
%&= -a_kg_{k} + a_kU_kb_k\\
 & = -a_k \frac{1}{\mathcal{M}}\sum_{i=1}^{\mathcal{M}} g_{S_{k,i}}+ a_k \frac{1}{\mathcal{M}}\sum_{i=1}^{\mathcal{M}} U_{S_{k,i}} b_{k,i}.
\end{aligned}
\ee
where $a_k=\frac{1}{\lambda_k}$, $b_k = [b_{k,1}^\top,\dots,b_{k,\mathcal{M}}^\top]^\top,\; b_{k,i}\in \Rn^{|S_{k,i}|}$.
%\be\label{sol-ls-1-dist}
%\begin{aligned}
%&b_k=(\lambda_k I + U_k^\top U_k)^{-1} (U_k^\top g_{k}). \\
%\end{aligned}
%\ee
%\be\label{sol-ls-12-dist}
%\begin{aligned}
%&(\lambda_k I + U_k^\top U_k)^{-1}  \\
%&= \left [ \begin{matrix} &\lambda_k I + U_{S_{k,1}}^\top U_{S_{k,1}},&\dots, &\lambda_k I + U_{S_{k,1}}^\top U_{S_{k,\mathcal{N}}}\\
%&\vdots &\vdots &\vdots\\
%&\lambda_k I + U_{S_{k,\mathcal{N}}}^\top U_{S_{k,1}},&\dots, &\lambda_k I + U_{S_{k,\mathcal{N}}}^\top U_{S_{k,\mathcal{N}}}\\
%\end{matrix} \right]^{-1}
%%\left [\begin{matrix}
%%&U_{S_{k,1}}^\top g_k\\
%%&\vdots\\
%%&U_{S_{k,\mathcal{N}}}^\top g_k
%%\end{matrix}\right ] .
%\end{aligned}
%\ee
%
%\be\label{sol-ls-13-dist}
%\begin{aligned}
%&U_k^\top g_{k} =
%%&= \left [ \begin{matrix} &\lambda_k I + U_{S_{k,1}}^\top U_{S_{k,1}},&\dots, &\lambda_k I + U_{S_{k,1}}^\top U_{S_{k,\mathcal{N}}}\\
%%&\vdots &\vdots &\vdots\\
%%&\lambda_k I + U_{S_{k,\mathcal{N}}}^\top U_{S_{k,1}},&\dots, &\lambda_k I + U_{S_{k,\mathcal{N}}}^\top U_{S_{k,\mathcal{N}}}\\
%%\end{matrix} \right]^{-1}
%\left [\begin{matrix}
%&U_{S_{k,1}}^\top g_k\\
%&\vdots\\
%&U_{S_{k,\mathcal{N}}}^\top g_k
%\end{matrix}\right ] .
%\end{aligned}
%\ee

To compute $U_k^\top U_k$ in (\ref{sol-ls-1}), we need compute $U_{S_{k,i}}^\top U_{S_{k,j}}$ for all $i,j = 1,\dots, \mathcal{M}$. However, since $U_{S_{k,j}}$ is stored by different workers, extensive communication cost is required among them. In the next, we use the block approximation to overcome this difficulty.

%Note that the sketchy methods stated in Sec \ref{SketchLS}-\ref{ImplicitConstruction} are also used in this setting.
%\subsection{SENG-Dist-I}
%Let us introduce a new notation which emphasizes the role of the samples
%\be d(S) = -ag_{S} + aU_S(\lambda I + U_S^\top U_S)^{-1}(U_S^\top g_{S}) \approx \text {Approximation methods in Sec. \ref{SketchLS}-\ref{ImplicitConstruction}}.\ee
%Therefore, $d_k$ in (\ref{inversion-dist}) can be also represented as $d_k = d(S_k)$. Notice that the computation of $d(S_{k,i})$ can be computed efficiently at each node independently and therefore do not need extra communication. Simultaneously, we can compute direction $d(S_{k,i})$ based on the samples $S_{k,i}$ at each node. The first proposed method is simple to aggregate $d(S_{k,i})$ by $\text{All-Reduce}$ operation as follows:
%\be \label{direc-dist-1}d_{k,\text{dist-I}} = \frac{1}{\mathcal{N}}\sum_{i=1}^\mathcal{N}d(S_{k,i}).\ee
%The communication cost of calculating (\ref{direc-dist-1}) is small, each iteration, only a tensor of the same size as the gradient is synchronized among the nodes.
%\subsection{Algorithm Description}

\subsection{Distributed SENG Algorithm}
\textbf{Block Diagonal Approximation to $U^\top U.$}

A direct idea is to use a diagonal approximation of $U_k^\top U_k$ to calculate the components $b$ in (\ref{sol-ls-1}). Specifically, $(\lambda_k I + U_k^\top U_k)^{-1} $ is approximated by a block-diagonal matrix with $\mathcal{M}$ blocks and
 \be\label{sol-ls-1-dist}
\begin{aligned}
b_k
%&=(\lambda_k I + U_k^\top U_k)^{-1} (U_k^\top g_{k}) \\
\approx \text{block-approx}\left \{ (\lambda_k I + U_k^\top U_k)^{-1}\right \} (U_k^\top g_{k})
%& = \left [\dots; (\lambda_k I + U_{S_{k,i}}^\top U_{S_{k,i}})^{-1} (U_{S_{k,i}}^\top g_{k});\dots \right]
:=\hat b_k.
%\left [ \begin{matrix} &\lambda_k I + U_{S_{k,1}}^\top U_{S_{k,1}},&\dots, &0\\
%&\vdots &\dots ,&\vdots\\
%&0,&\dots, &\lambda_k I + U_{S_{k,\mathcal{N}}}^\top U_{S_{k,\mathcal{N}}}\\
%\end{matrix} \right]^{-1}
%\left [\begin{matrix}
%&U_{S_{k,1}}^\top g_k\\
%&\vdots\\
%&U_{S_{k,\mathcal{N}}}^\top g_k
%\end{matrix}\right ] := \hat b_k(g_k) .
\end{aligned}
\ee

Since $\hat b_{k,i}$ only relates to $U_{k,i}$ and $g_k$, once $g_k$ is available, we can compute $ \hat b_k$ and further calculate $ U_{S_{k,i}} \hat b_{k,i}$ simultaneously. Therefore, the direction can be obtained by averaging (All-Reduce) them among all workers by \eqref{inversion-dist}. The extra communication traffic is one tensor with the same size as the gradient and this synchronization is done separately from that of the gradient.

\begin{table}
\scriptsize
\centering
\caption{A summary of the computational and memory complexity.}
\begin{tabular}{c|ccc|ccc}
\hline \hline
&\multicolumn{3}{c|}{Computational Cost} &\multicolumn{3}{c}{Memory Consumption} \\
 \hline
% Original Direction Computation (\ref{eq:dk})&\multicolumn{3}{c|}{$n^3$} &\multicolumn{3}{c}{$np$} \\
Low-rank Computation (\ref{inversion}) &\multicolumn{3}{c|}{$\varrho^3 +\varrho^2 n$} &\multicolumn{3}{c}{$\varrho n$} \\
%\hline
Original LS (\ref{least-square-1})&\multicolumn{3}{c|}{$\varrho^2 n$} &\multicolumn{3}{c}{$\varrho n$} \\
Sketchy LS (\ref{least-square-2})&\multicolumn{3}{c|}{$\varrho^2 q$} &\multicolumn{3}{c}{$\varrho n$} \\
%\hline
$U^\top z$ &\multicolumn{3}{c|}{$\varrho n$}  & \multicolumn{3}{c}{$\varrho n$}  \\
$U^\top z$ (\ref{computeUtz})&\multicolumn{3}{c|}{$n_An_G\varrho r $} &\multicolumn{3}{c}{$(n_A+n_G)r\varrho$} \\
Randomized $U^\top z$ (\ref{implicit-sketching-2})&\multicolumn{3}{c|}{$\zeta_A\zeta_G \varrho r$} &\multicolumn{3}{c}{$(\zeta_A+\zeta_G)r\varrho$} \\
%\hline
$U^\top U$&\multicolumn{3}{c|}{$\varrho^2 n$} &\multicolumn{3}{c}{$\varrho n$} \\
 $U^\top U$ (\ref{UtUapprox1})&\multicolumn{3}{c|}{$r^2\varrho^2(n_{ A} +n_{ G})$} &\multicolumn{3}{c}{$(n_A+n_G)r\varrho$} \\
Randomized $U^\top U$ (\ref{UtUapprox2})&\multicolumn{3}{c|}{$r^2\varrho^2(\zeta_{ A} +\zeta_{ G})$} &\multicolumn{3}{c}{$(\zeta_A+\zeta_G)r\varrho$} \\
%\hline
$Uc$&\multicolumn{3}{c|}{$\varrho n$} &\multicolumn{3}{c}{$\varrho n$} \\
$Uc$ (\ref{approx-2})&\multicolumn{3}{c|}{$n_An_Gr\varrho$} &\multicolumn{3}{c}{$(n_A+n_G)r\varrho$} \\
 $Uc$ (\ref{approx-3})&\multicolumn{3}{c|}{$n_An_Gr$} &\multicolumn{3}{c}{$(n_A+n_G)r\varrho$} \\
 \hline \hline
\end{tabular}
\label{ComStoCost}
\vspace{-2ex}
\end{table}

We further consider a distributed variant which has the same communication cost and the extra tensor can be synchronized simultaneously with the gradient. The coefficient $b_k$  is approximated by using the gradient $g_{k-1}$ in the last step as:
\be
\label{Dist-syn-2}
\begin{aligned}
%\hat g_{k-1} &\approx (1-\zeta_k)\hat g_{k-2} + \zeta_k g_{k-1}\\
b_k \approx \text{block-approx}\left \{ (\lambda_k I + U_k^\top U_k)^{-1}\right \} (U_k^\top  g_{k-1}) := \tilde b_k.
\end{aligned}
\ee

The sketching method presented in Sec \ref{SketchLS} can be used naturally in the above distributed algorithms by replacing $U_{S_{k,i}}$ by $\Omega_k U_{S_{k,i}}$ in (\ref{sol-ls-1-dist}) and \eqref{Dist-syn-2}. If we use the implicit computation and storage of $U_k$ in Sec \ref{ImplicitConstruction}, the diagonal approximation to $U_k^\top U_k$ can also be applied to (\ref{UtUapprox2}).

\textbf{Computation of $U^\top z$}
%
%Note that the $g_k$ in \eqref{sol-ls-1-dist} or $g_{k-1}$ in \eqref{Dist-syn-2} has already been synchronized among different nodes before computing $d_k$. The operation $U^\top z$
can be the same way as that in Sec \ref{SketchLS} or Sec \ref{ImplicitConstruction}.

\textbf{Computation of $Uc$}

For the layers that use the explicit mechanisms, the operation $Uc$ is the same as (\ref{inversion-dist}). Otherwise, we use a new mechanism to overcome the communication in the summation of the absolute values $c_k$ presented in \eqref{approx-3}. We first compute $(G^c)_{i} = \sum_{j\in S_{k,i}} \sqrt{|c_j|}G_j$ and $(A^c)_i = \sum_{j\in S_{k,i}} \frac{c_j}{\sum_{i=1}^{|S_{k,i}|}\sqrt{|c_j|}}A_j$ in different workers, then synchronize them and approximate $Uc$ as:
 \begin{align}
 &Uc  \approx \ovec \left( \sum_{i =1 }^\mathcal{M} (G^c)_i\right) \left(\sum_{i =1 }^\mathcal{M} (A^c)_i\right)^\top.
 \end{align}
%However, the computation $Uc$ in (\ref{approx-3}) need to be adjusted to reduce the synchronous cost. We consider the following approximation:
%\begin{align}
%& Uc  \approx \mathcal{C}_U(c) \\ &\approx \sum_{i=1}^{\mathcal{N}} \ovec \left( \sum_{i\in S_{k,i}} \sqrt{|c_i|}G_i\right) \left(\sum_{i\in S_{k,i}} \frac{c_i}{\sum_{i=1}^{|S_{k,i}|}\sqrt{|c_i|}}A_i\right)^\top.
% \end{align}
% In this way, the size of synchronized tensor is also the same as the gradient.

% \subsection{SENG-Dist-III}
%  \be\label{sol-ls-1-dist}
%\begin{aligned}
%\hat b_k(\widetilde g_k) &=(\lambda_k I + U_k^\top U_k)^{-1} (U_k^\top \widetilde g_{k}), \\
%\widetilde g_k &= m_k \widetilde g_{k-1} + (1-m_k) g_{k-1}.\\
%\end{aligned}
%\ee

\section{Convergence Analysis}
The convergence analysis of SENG is established in this section. We first prove that for the general objective function $\Psi(\theta)$, the algorithm converges to the stationary point globally. Furthermore, it is shown that the SENG method can converge to the optimal solution linearly under some mind conditions in the fully-connected neural network.
\subsection{Global Convergence}
In this part, we show the global convergence when $g_k$ is the unbiased mini-batch gradient. We assume the directions for all layers are obtained by the sketched subproblem (\ref{least-square-2}) in Sec \ref{SketchLS}. Since the update rules for all layers are identical, we only consider one layer and drop the layer indices. The main idea of our proof is to first estimate the error between $d_k$ (\ref{inversion}) and $\hat d_k$ (\ref{sol-ls-2}), and the descent of the function values, then balance them by choosing a suitable step size. We give some necessary assumptions below.

\begin{assumption} Let $\eta_k,\epsilon_k \in(0,1)$. Let $v$ be any fixed vector and $N_k \in \Rn^{n\times \rho_k}$ be an orthogonal basis for the column span of $U_k$, where $\rho_k = \text{rank}(U_k).$ Let $\Omega_k \in \Rn^{q_k\times n}$ be a sketching matrix, where the sample size $q_k$ depends on $\eta_k$, $\epsilon_k$ and $\delta_k$. The following two assumptions hold for all $k$ with a probability $1-\delta_k$:
\begin{itemize}
\item[A.1)] $\|N_k^\top \Omega_k^\top \Omega_k N_k- I\|_2 \leq \eta_k,$
\item[A.2)] $\|N_k^\top \Omega_k^\top \Omega_k v - N_k^\top v\|^2_2 \leq \epsilon_k \|v\|_2^2.$
\end{itemize}
\end{assumption}
Assumptions A.1-A.2 are called subspace embedding property and matrix multiplication property, respectively. They are standard in related sketching methods \cite{10.5555/3122009.3242075,10.5555/2946645.2946694}. When the sample size $q_k$ is large enough, Assumptions A.1 and A.2 will be satisfied. Throughout the paper, we assume the sketching matrices are independent from each other and from the stochastic gradients. The sequence $\{\theta_k\}$ do not affect the stochasticity of $\mathbf{\Omega} = \{\Omega_k\}_{k=1}^\infty$.
\begin{assumption}
\begin{itemize}
\item[B.1]$\Psi$ is continuously differentiable on $\Rn^n$ and is bounded from
    below. The gradient $\nabla \Psi$ is Lipschitz continuous on $\Rn^n$ with modulus $L_\Psi \geq 1$.
\item[B.2] There exists positive constants $h_1,h_2$ such that the matrix holds:
$
h_1I \preceq (B_k+\lambda_k I) \preceq h_2 I
$
for all k.
\item[B.3] $B_k$ and $g_k$ are independent for any iteration $k$. In addition, it holds almost surely that the stochastic gradient is unbiased, i.e., $\Expe[g_k| {\theta_{k-1},\dots,\theta_0}, \mathbf{\Omega}] = \nabla \Psi(\theta_k)$ and the variance of the stochastic gradient is bounded:
\[
 \Expe[\|g_k -\nabla \Psi(\theta_k)
 \|^2| {\theta_{k-1},\dots,\theta_0}, \mathbf{\Omega}]  \leq  \sigma_k^2.
\]

\end{itemize}
\end{assumption}
Assumptions B.1-B.3 are common in stochastic quasi-Newton type methods \cite{ByrHanNocSin16,WanMaGolLiu17,yangSEQN}. We next summarize our analysis.

%We next estimate the error between $b_k$ (\ref{sol-ls-1}) and $\hat b_k$ (\ref{sol-ls-2}). %\begin{lemma}

%%\begin{lemma}
%%\label{diffbhatb}
%%Suppose that Assumptions A and B.2 are satisfied with $\eta_k$ and $\epsilon_k$. It holds
%%\be
%%\|b_k-\hat b_k\|_2 \leq
%%%h_1^{1/2} \frac{\sqrt{\epsilon_k}+\sqrt{\frac{h_2}{h_1}}\eta_k}{1-\eta_k} \|g_k\| =
%% \frac{\sqrt{\epsilon_k h_1}+\eta_k\sqrt{h_1}}{1-\eta_k}\|g_k\|_2
%%\ee
%%with probability at least $1-\delta_k$.
%%\end{lemma}
%%The proof of Lemma \ref{diffbhatb} is shown in Appendix.

\iffalse
\begin{figure*}[ht]
\vspace{0.5ex}
\centering
\begin{tabular}{ccc}
\includegraphics[width=0.25\textwidth]{./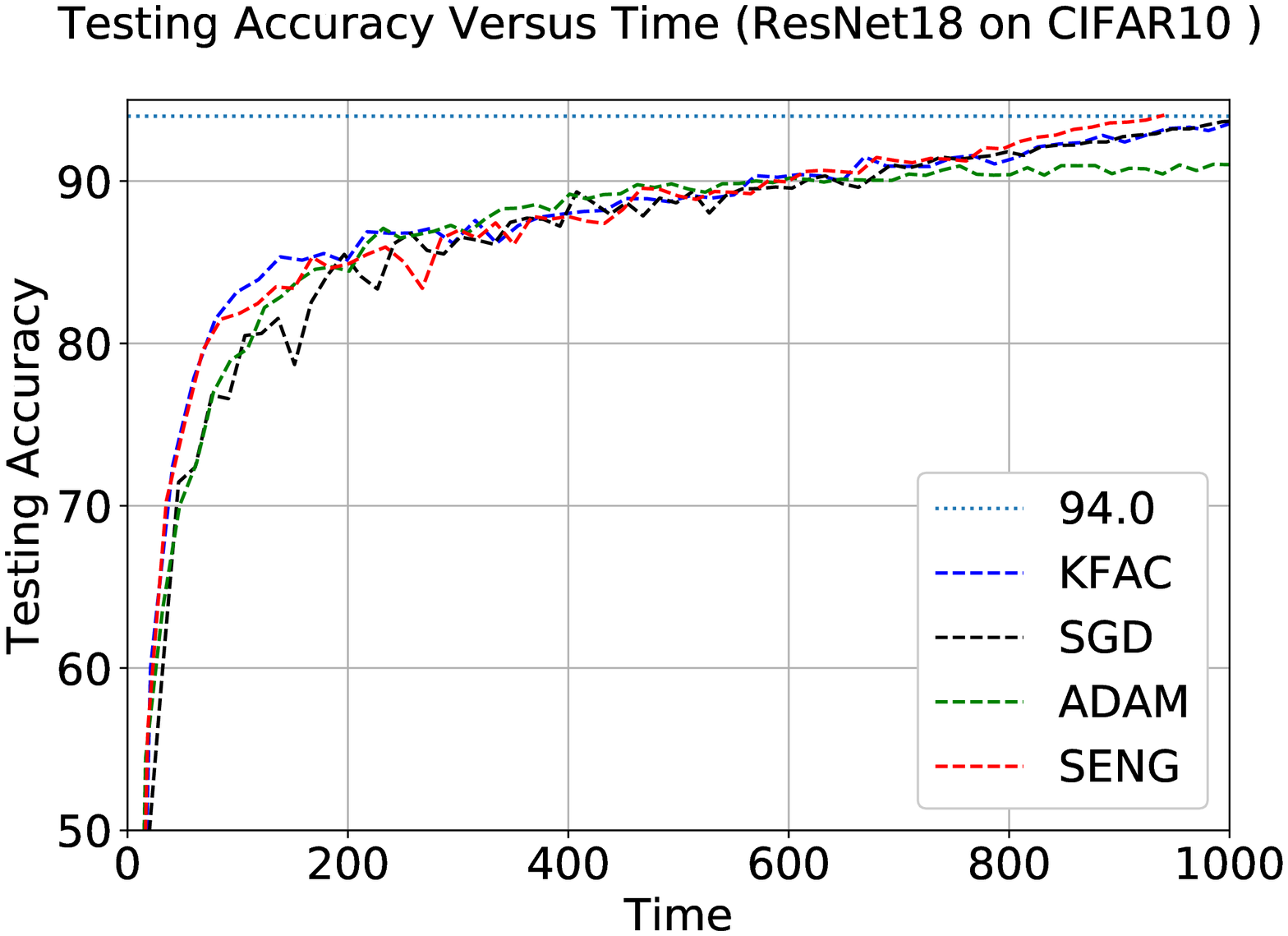}
\includegraphics[width=0.25\textwidth]{./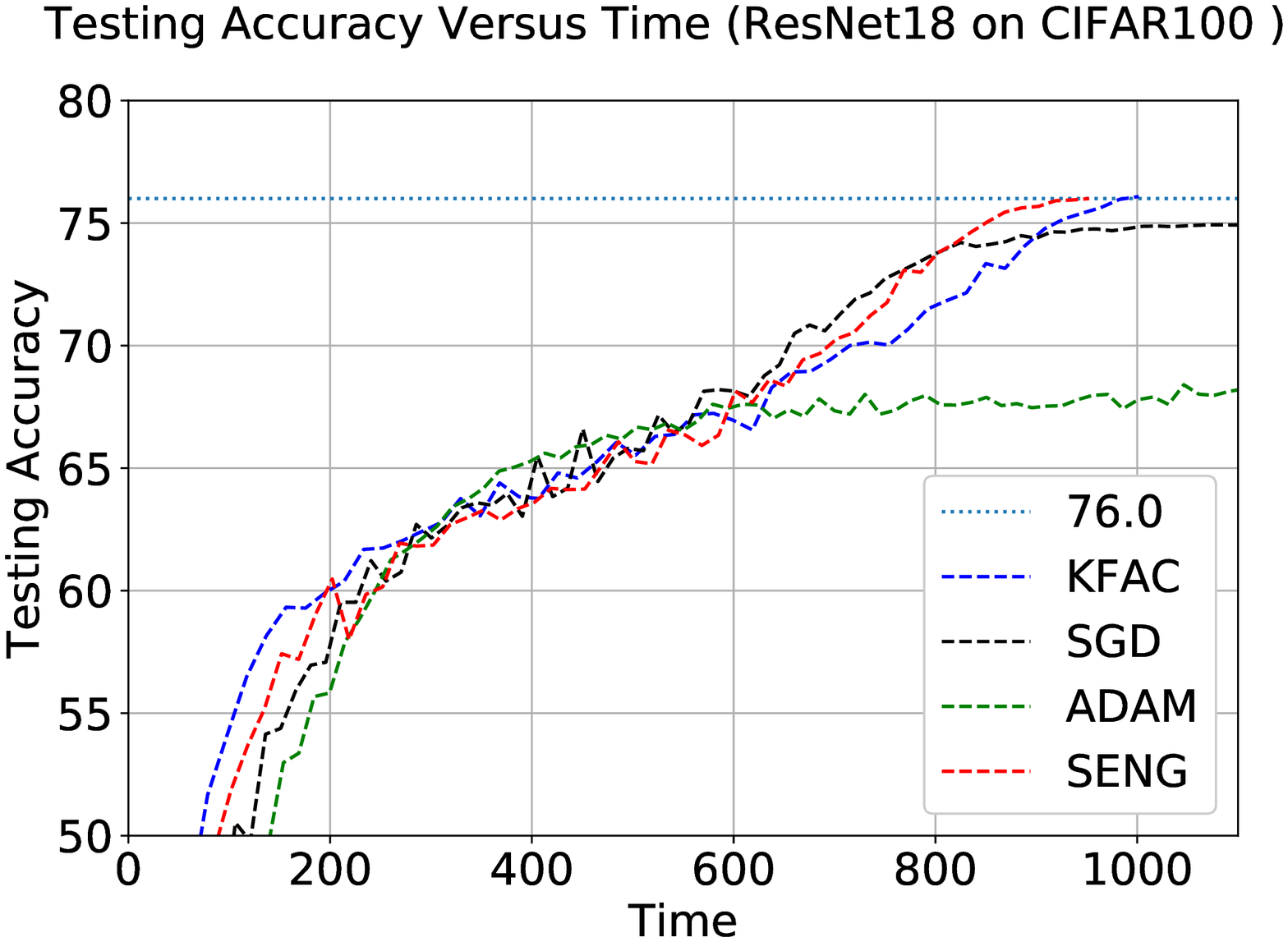}
\includegraphics[width=0.25\textwidth]{./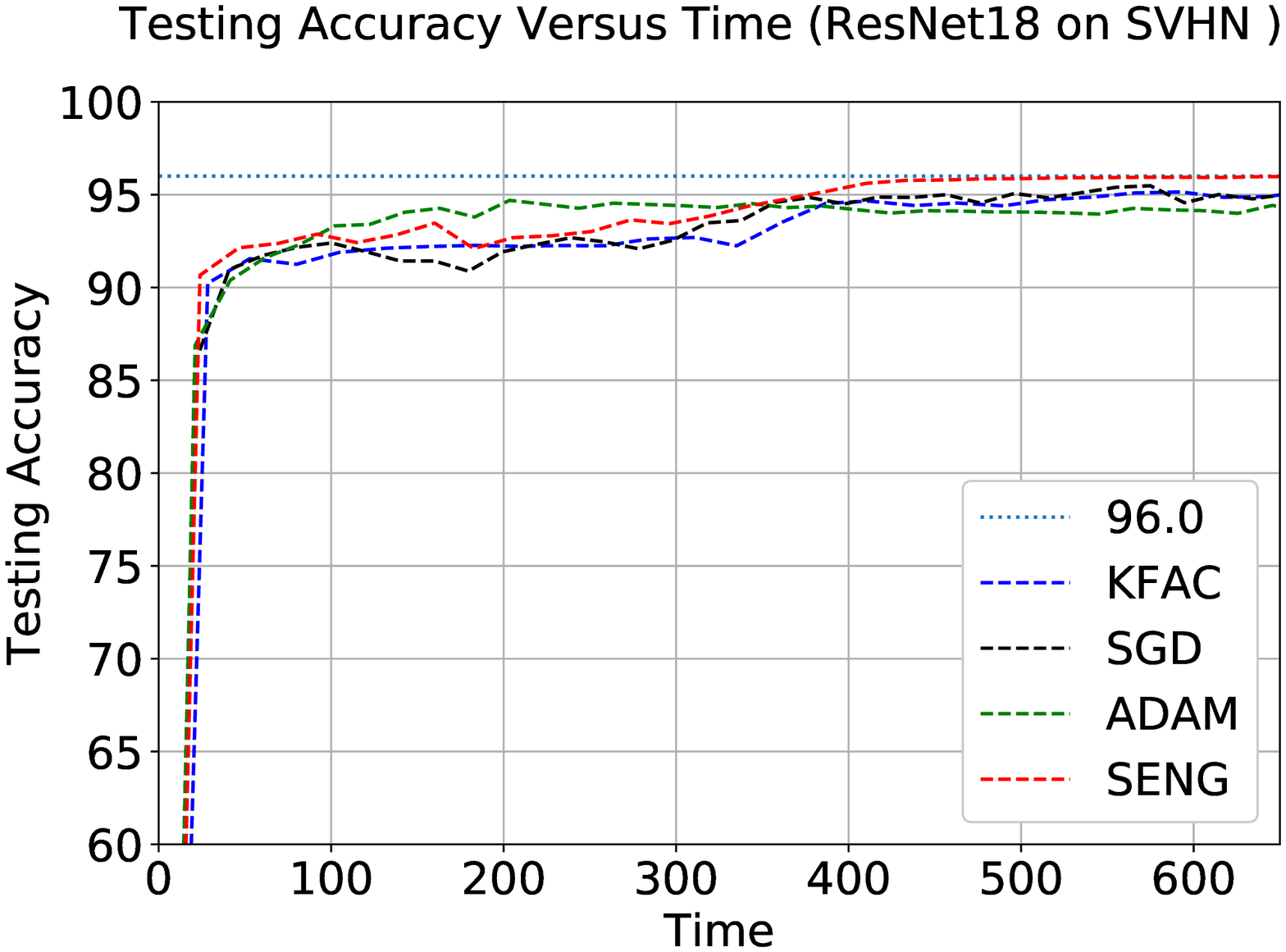}\\
\includegraphics[width=0.25\textwidth]{./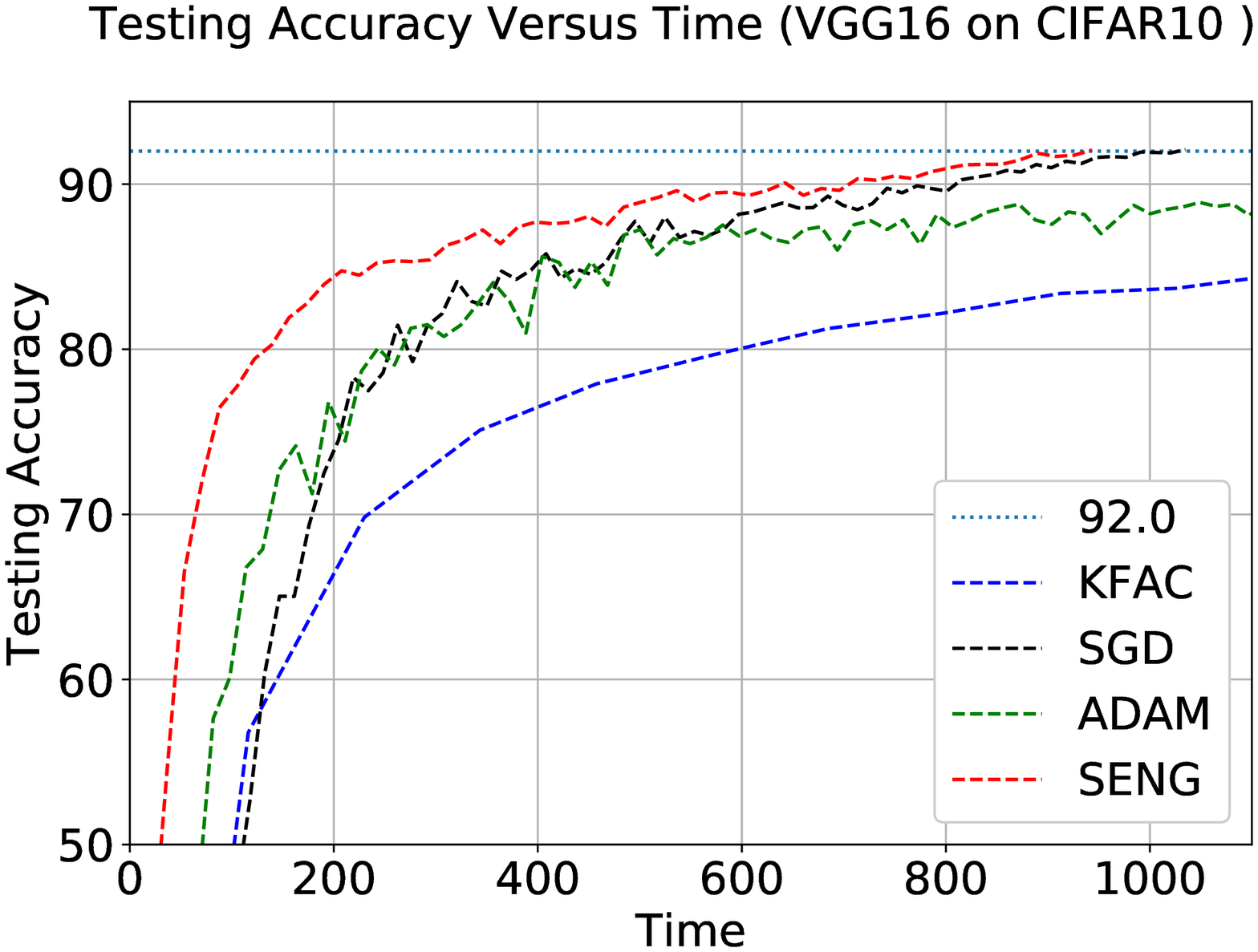}
\includegraphics[width=0.25\textwidth]{./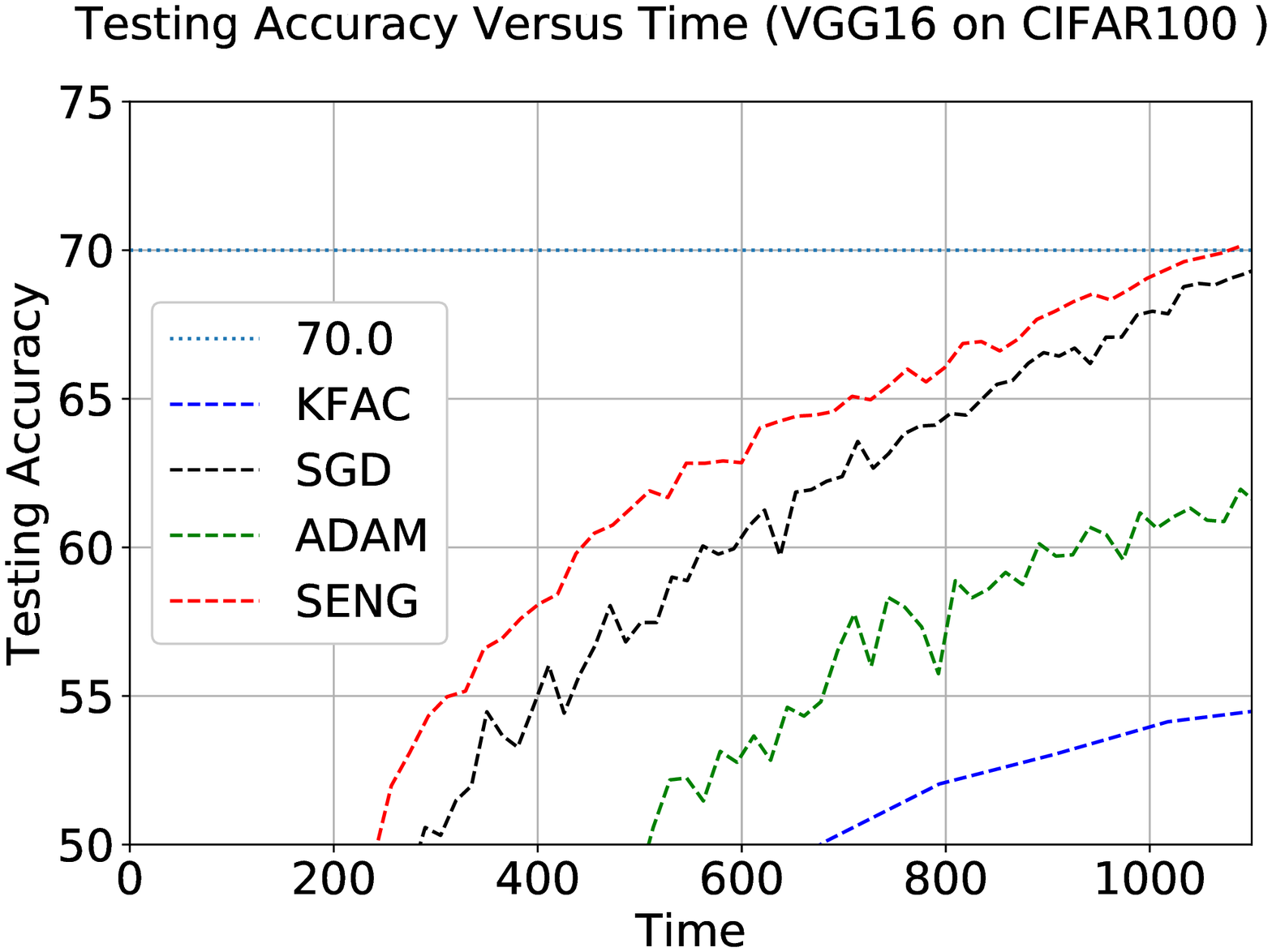}
\includegraphics[width=0.25\textwidth]{./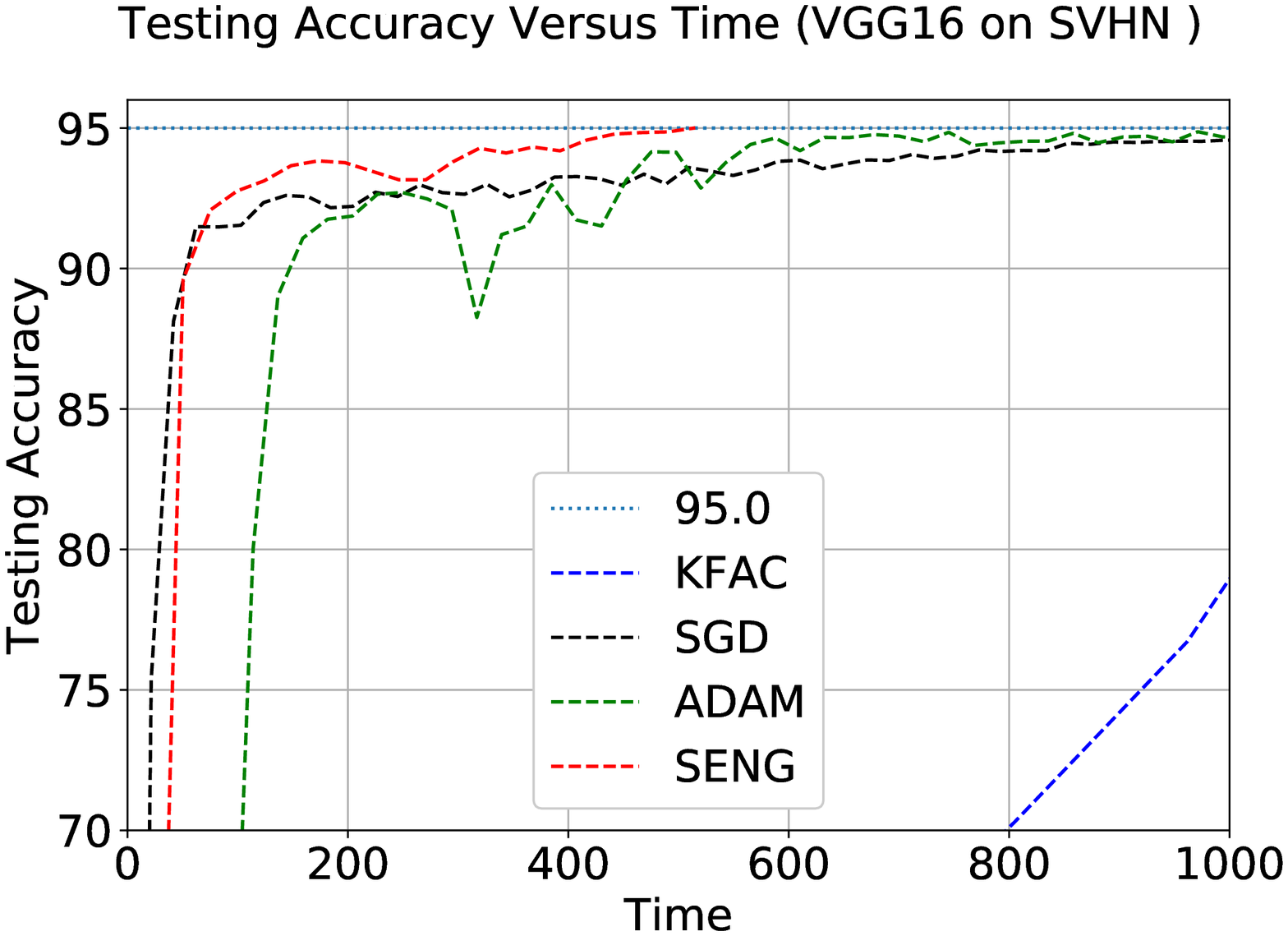}\\
%\includegraphics[width=0.3\textwidth]{./fig/results-resnet18-cifar10-Testing-Accu-Epoch.eps}
%\includegraphics[width=0.3\textwidth]{./fig/results-resnet18-cifar100-Testing-Accu-Epoch.eps}
%\includegraphics[width=0.3\textwidth]{./fig/results-resnet18-svhn-Testing-Accu-Epoch.eps}\\
%\includegraphics[width=0.3\textwidth]{./fig/results-vgg16-cifar10-Testing-Accu-Epoch.eps}
%\includegraphics[width=0.3\textwidth]{./fig/results-vgg16-cifar100-Testing-Accu-Epoch.eps}
%\includegraphics[width=0.3\textwidth]{./fig/results-vgg16-svhn-Testing-Accu-Epoch.eps}\\
\end{tabular}
\vspace{-2ex}
\caption{Testing Accuracy Versus Time on Six Tasks.}
\vspace{-3ex}
\label{resnetandvgg}
\end{figure*}

\fi

\begin{theorem}\label{thm:globalconv}
Suppose that Assumptions A.1-A.2 and B.1-B.3 are satisfied and $
 \frac{\sqrt{\epsilon_k }+\eta_k}{1-\eta_k}$ is small enough. If the step size $\{\alpha_k\}$ further satisfies $\alpha_k \leq \min\left \{\frac{1}{2L_{\Psi}},\frac{h_1^2}{2L_\Psi h_2} \right \}$,
$\sum \alpha_k = \infty$ and $\sum \alpha_k\sigma_k^2 < \infty,$ it holds
%for Algorithm \ref{alg:SENG}
\begin{equation*}
\lim_{k\rightarrow \infty} \nabla \Psi(\theta_k) = 0
\end{equation*}
with probability $\Pi_{k=0}^\infty (1-\delta_k)$.
\end{theorem}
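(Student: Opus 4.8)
The plan is to treat SENG as an inexact stochastic quasi-Newton method and run the classical supermartingale argument, supplying two ingredients special to the sketch: a deterministic bound on the perturbation $\hat d_k-d_k$ produced by replacing the exact direction \eqref{inversion} by the sketched one \eqref{sketch-direc}, and a careful accounting of the independence between $g_k$, $B_k$ and $\Omega_k$. Following the stated proof outline, I first estimate the sketching error, then write down the descent of the objective via the Lipschitz gradient, and finally balance the two through the step size.

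The key first step is the error bound. Writing $a_k=1/\lambda_k$ and noting from \eqref{sol-ls-2} that $\hat b_k=(\lambda_k I+\Xi_k^\top\Xi_k)^{-1}(\Omega_k U_k)^\top\Omega_k g_k$ is linear in $g_k$, the sketched direction can be expressed as $\hat d_k=-H_k g_k$ with $H_k=a_k I-a_k U_k(\lambda_k I+\Xi_k^\top\Xi_k)^{-1}(\Omega_k U_k)^\top\Omega_k$, a matrix depending only on $U_k$ and $\Omega_k$ but not on $g_k$; likewise $d_k=-\bar H_k g_k$ with $\bar H_k=(B_k+\lambda_k I)^{-1}$, which by B.2 satisfies $\frac{1}{h_2}I\preceq\bar H_k\preceq\frac{1}{h_1}I$. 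Applying the subspace-embedding property A.1 and the matrix-multiplication property A.2 in the orthonormal coordinates $N_k$ of the column span of $U_k$, I would invoke the standard sketched least-squares guarantees \cite{10.5555/3122009.3242075} to obtain, on the event that A.1--A.2 hold, $\|(H_k-\bar H_k)v\|\le\frac{\gamma_k}{h_1}\|v\|$ for each fixed vector $v$, where $\gamma_k=\frac{\sqrt{\epsilon_k}+\eta_k}{1-\eta_k}$ is precisely the quantity assumed small. Taking $v=\nabla\Psi(\theta_k)$ (which is measurable with respect to the past, hence fixed relative to $\Omega_k$) and using Cauchy--Schwarz to handle the non-symmetry of $H_k$ yields the sufficient-descent bound $\nabla\Psi(\theta_k)^\top H_k\nabla\Psi(\theta_k)\ge\big(\frac{1}{h_2}-\frac{\gamma_k}{h_1}\big)\|\nabla\Psi(\theta_k)\|^2$, which is positive once $\gamma_k<h_1/h_2$, i.e. ``small enough''; taking $v=g_k$ gives the norm bound $\|\hat d_k\|\le\frac{1+\gamma_k}{h_1}\|g_k\|$.

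Next I would combine these with the descent lemma from B.1, $\Psi(\theta_{k+1})\le\Psi(\theta_k)+\alpha_k\nabla\Psi(\theta_k)^\top\hat d_k+\frac{L_\Psi\alpha_k^2}{2}\|\hat d_k\|^2$, and take conditional expectations. Conditioning on the history together with $U_k$ and $\Omega_k$, the gradient $g_k$ is still conditionally unbiased by B.3 and the independence of $\mathbf\Omega$, so $\mathbb{E}[\nabla\Psi(\theta_k)^\top\hat d_k\mid\mathcal F_{k-1}]=-\nabla\Psi(\theta_k)^\top H_k\nabla\Psi(\theta_k)\le-\mu_k\|\nabla\Psi(\theta_k)\|^2$ with $\mu_k=\frac{1}{h_2}-\frac{\gamma_k}{h_1}>0$, while $\mathbb{E}[\|\hat d_k\|^2\mid\mathcal F_{k-1}]\le\frac{(1+\gamma_k)^2}{h_1^2}(\|\nabla\Psi(\theta_k)\|^2+\sigma_k^2)$ by the bounded-variance part of B.3. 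The step-size ceiling $\alpha_k\le\min\{\frac{1}{2L_\Psi},\frac{h_1^2}{2L_\Psi h_2}\}$ is exactly what is needed to absorb the $\alpha_k^2\|\nabla\Psi\|^2$ term into the negative $\alpha_k\|\nabla\Psi\|^2$ term, producing the recursion $\mathbb{E}[\Psi(\theta_{k+1})\mid\mathcal F_{k-1}]\le\Psi(\theta_k)-\frac{\alpha_k\mu_k}{2}\|\nabla\Psi(\theta_k)\|^2+C\alpha_k^2\sigma_k^2$.

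Since $\Psi$ is bounded below and $\sum\alpha_k^2\sigma_k^2\le(\sup_k\alpha_k)\sum\alpha_k\sigma_k^2<\infty$, the Robbins--Siegmund supermartingale convergence theorem then gives that $\Psi(\theta_k)$ converges and $\sum_k\alpha_k\|\nabla\Psi(\theta_k)\|^2<\infty$ almost surely; combined with $\sum\alpha_k=\infty$ this forces $\liminf_k\|\nabla\Psi(\theta_k)\|=0$, which is upgraded to $\lim_k\|\nabla\Psi(\theta_k)\|=0$ using the Lipschitz continuity of $\nabla\Psi$ by the standard argument of \cite{ByrHanNocSin16,yangSEQN}. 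The probability factor $\prod_k(1-\delta_k)$ is just the chance that A.1--A.2 hold at every iteration, and the factors multiply because the $\Omega_k$ are independent. I expect the genuinely delicate step to be the first one: converting the per-vector sketching guarantees A.1--A.2 into the operator-type bound $\|(H_k-\bar H_k)v\|\le\frac{\gamma_k}{h_1}\|v\|$ and justifying its simultaneous use for the fixed vectors $\nabla\Psi(\theta_k)$ and $g_k$ via conditioning (possibly with a union bound over the two vectors); the martingale and step-size bookkeeping, while lengthy, is routine once the descent inequality is in hand.
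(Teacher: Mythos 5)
Your proposal is correct and follows essentially the same route as the paper's proof: the operator-type perturbation bound you invoke at the start is exactly the paper's key lemma, which bounds $\|\hat b_k - b_k\|$ via the SVD $U_k = N_k\Sigma_k V_k$ and Assumptions A.1--A.2 in the coordinates $N_k$ (yielding the constant $\sqrt{h_2/h_1}\,\gamma_k/\lambda_k$ in place of your $\gamma_k/h_1$, immaterial since only smallness of $\gamma_k$ is used), after which the Lipschitz descent inequality, the step-size balancing, the summability $\sum_k \alpha_k\|\nabla\Psi(\theta_k)\|_2^2<\infty$, the $\liminf$-to-$\lim$ upgrade by the standard contradiction argument, and the $\prod_k(1-\delta_k)$ event accounting all coincide with the paper's. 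The only minor, non-substantive differences are that the paper decomposes $\hat d_k = (\hat d_k - d_k)+(d_k-\tilde d_k)+\tilde d_k$ with $\tilde d_k = -(B_k+\lambda_k I)^{-1}\nabla\Psi(\theta_k)$ and sums conditional expectations rather than invoking Robbins--Siegmund, and it applies A.2 only to the single vector $g_k^{\perp}$ per iteration, so your anticipated union bound over two vectors is avoidable.
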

The proof of Theorem \ref{thm:globalconv} is shown in the Appendix.
\begin{figure*}[t]
\centering
\vspace{-1ex}
\begin{tabular}{ccc}
\includegraphics[width=0.33\textwidth]{./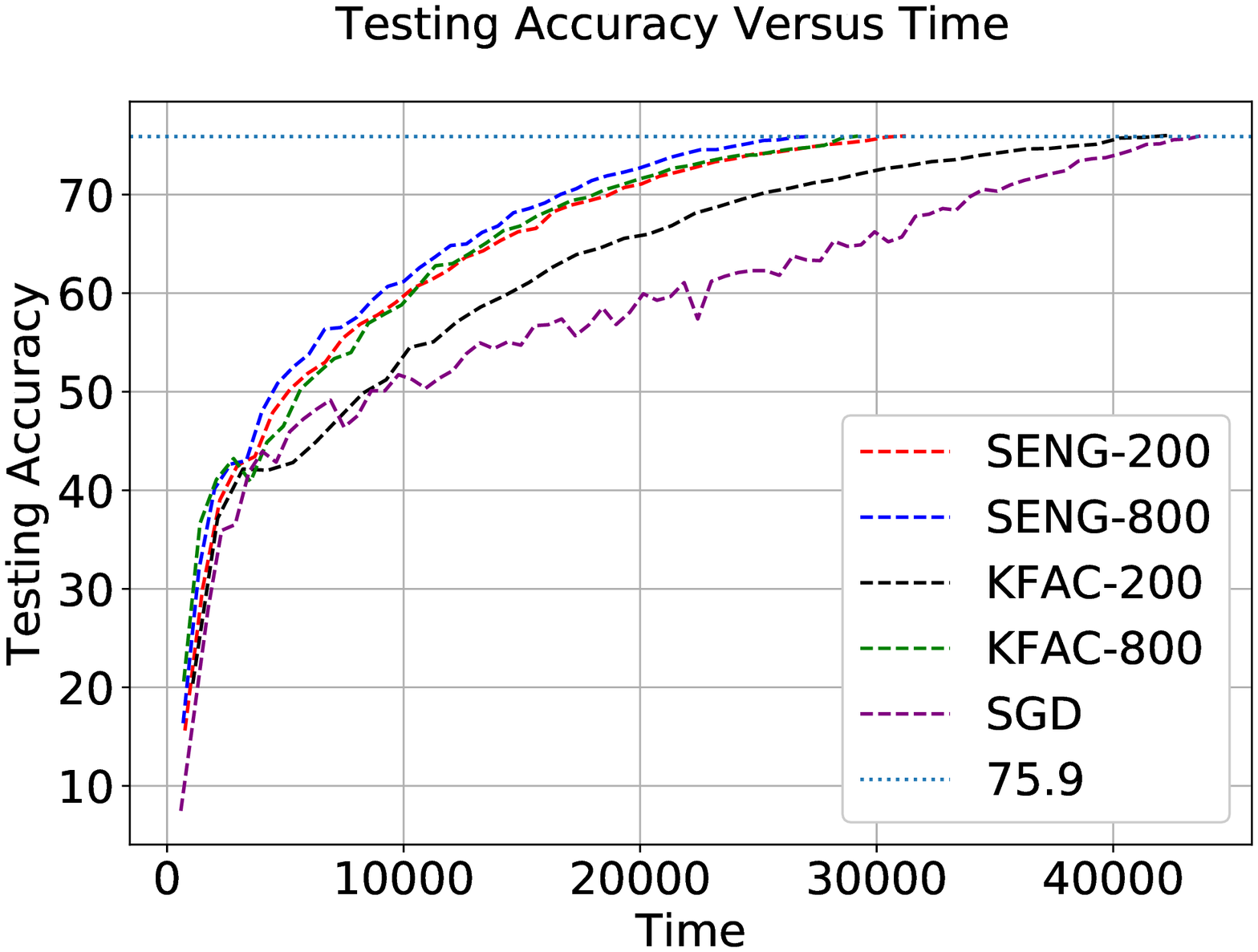}
\includegraphics[width=0.33\textwidth]{./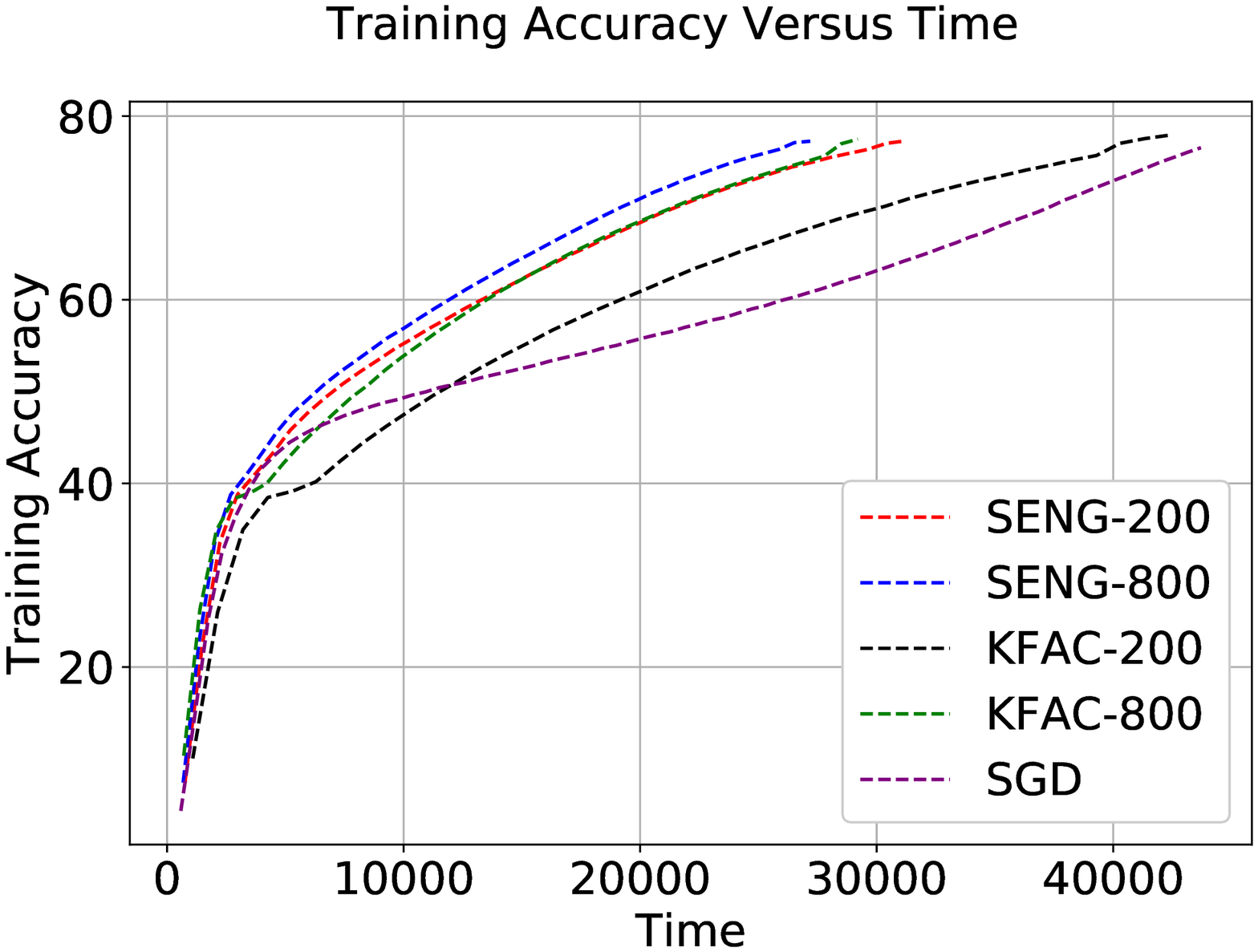}
\includegraphics[width=0.33\textwidth]{./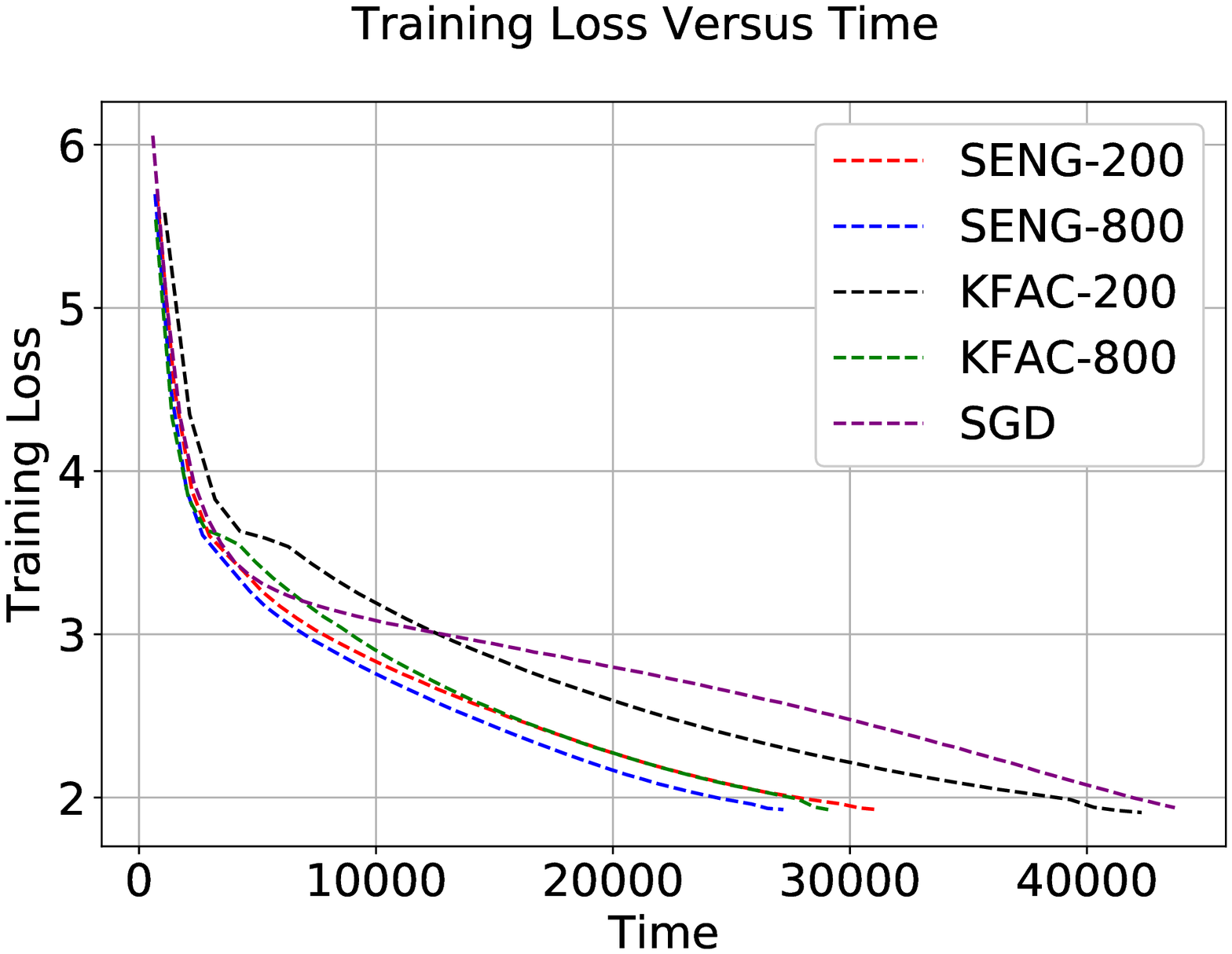}\\
\end{tabular}
\vspace{-2ex}
\caption{Numerical performance on ResNet50 on ImageNet-1k.}
\label{imagenet-time}
%Upper Three: The numerical comparison with KFAC and SGD. Lower Three: The performance between SENG variants with different matrix updating frequency. The number behind SENG is the number of frequency.
\vspace{-2ex}

\end{figure*}
\subsection{Linear Convergence in Wide Neural Networks}
In this part, we analyze SENG for over-parameterized neural networks in the NTK regime. Consider a fully-connected network with a single output \footnote{For simplicity, we consider one-dimension output and fix the second layer. However, it is easy to extend the analysis to the multi-dimensional output or the case for jointly training both layers.}:
%\begin{align*}
%f_{l}(x,\theta) = \frac{\sigma_w}{\sqrt{M_{l-1}}}W_lh_{l-1} + \sigma_b b_l, \qquad h_l = \phi(f_l),
%\end{align*}
\begin{align*}
f(x,\theta) = \frac{1}{\sqrt{\hat m}}\sum_{i=1}^{\hat{m}} a^i \phi((w^i)^\top x) = \frac{1}{\sqrt{\hat{m}}}a^\top \phi(W x)  ,
\end{align*}
%for $l=1,\dots, L,$
%where $h_0=x \in \Rn^{m_0\times 1}$ is the input, $\theta$ is the set of all learned parameters $\{W_l\in \Rn^{m_{l}\times m_{l-1}},b_l \in \Rn^{m_l}\}_{l=1}^L$, $\sigma_w$ and $\sigma_b$ are scaling factors.
where $x\in\Rn^{m_0}$ is the input, $\phi$ is the Relu activation function and $a = [a^1,\dots, a^{\hat{m}}]^\top \in \Rn^{\hat m}.$ $\theta = \ovec{(W)}\in\Rn^n$ is the set of the learned parameters, where $W = [w^1,\dots w^{\hat{m}} ] $. For a given dataset $\{x_i,y_i\}_{i=1}^N$,  we consider the case where the loss is set to be MSE:
\be
\label{MSE-NTK}
\Psi(\theta) = \frac{1}{2} \|f(\theta) - y\|_2^2,
\ee
%where $f(\theta) = [f_L(x_1,\theta)^\top,\dots, f_L(x_N,\theta)^\top]^\top \in \Rn^{m_L N}$ and $y = [y_1^\top,\dots,y_N^\top]^\top\in \Rn^{m_L N}.$
where $f(\theta) = [f(x_1,\theta)^\top,\dots, f(x_N,\theta)^\top]^\top \in \Rn^{ N}$ and $y = [y_1^\top,\dots,y_N^\top]^\top\in \Rn^{N}.$
Denote $J(\theta) $ by the collections of Jacobian matrices $\frac{\partial f(\theta)}{\partial \theta }^\top \in \Rn^{n\times N}.$ Hence, the gradient can be written as $\nabla \Psi(\theta_k) = J_k^\top (f^k-y),$ where $J_k = J(\theta_k)$ and $f^k = f(\theta_k).$
%It is not difficult to extend Theorem \ref{NGD-Theory-1} to our SENG method.
%Let us first see the iteration update of the expected version of SENG (where $U_k = J_k^\top$) for MSE loss :
%\[
%\theta_{k+1} = \theta_k - \eta J_k^\top (J_kJ_k^\top)^{-1}(J_kJ_k^\top)^{-1} J_kJ_k^\top f^k
%\]

Consider the SENG using \eqref{sol-ls-2} applied to $U_k = J_k^\top$ with $\alpha_k \equiv \alpha$. We next briefly explain a key step of our proof by deriving an equivalent format of the direction. The update rule is $
\theta_{k+1} = \theta_k - \alpha d_k,
$
where $d_k = \frac{1}{\lambda_k}J_k^\top\left ( I -  (\lambda_k I + \widetilde M_k )^{-1}\widetilde M_k \right )( f^k-y)$ and $\widetilde M_k = J_k\Omega_k^\top \Omega_k J_k^\top.$
Assume  $\widetilde M_k$ is invertible and its eigenvalue decomposition is $ Q_k \widetilde \Sigma_k Q_k$ where $Q_k$ is orthogonal and $\widetilde \Sigma_k$ is diagonal. Then, we obtain: \vspace{-1ex}
\be
\begin{aligned}
d_k
%& \frac{1}{\lambda_k}J_k^\top\left ( I -  (\lambda_k I + J_k\Omega_k \Omega_k^\top J_k^\top)^{-1}J_k\Omega_k \Omega_k^\top  J_k^\top \right ) f^k \\
=& \frac{1}{\lambda_k}J_k^\top \left (Q_k \left( I - (\lambda_k I + \widetilde \Sigma_k)^{-1}  \widetilde \Sigma_k \right ) Q_k^\top \right)( f^k-y)\\
= & J_k^\top \left (Q_k ( \lambda_k I + \widetilde \Sigma_k)^{-1}  Q_k^\top \right) (f^k-y),
\end{aligned}
\vspace{-1ex}
\ee
where the second equality uses the fact that $\widetilde \Sigma_k$ is a diagonal matrix.
The difference of SENG from NGD can be seen clearer by letting $\lambda_k \rightarrow 0$. Since $\widetilde M_k$ is positive definite due to over-parameterization, each diagonal entry of $\widetilde \Sigma_k$ is positive and we have: \vspace{-1ex}
$$d_k = J_k^\top \left (Q_k \widetilde \Sigma_k^{-1}  Q_k^\top \right)(f^k-y) = J_k^\top \widetilde M_k^{-1}(f^k-y).$$
Note that the direction of NGD is $J_k^\top (J_kJ_k^\top)^{-1}(f^k-y)$, see \cite{zhang2019fast}. Hence, the main difference is the occurrence of $\Omega_k^\top \Omega_k$ in $\widetilde M_k$. 

 Our convergence is established in the next theorem.
%The update of SENG in this case becomes:
%\[
%\begin{aligned}
%\theta_{k+1}
%& =  \theta_k - \alpha J_k^\top (J_k\Omega_k^\top \Omega_kJ_k^\top)^{-1}(f^k-y).
%\end{aligned}
%\]
\begin{theorem}
\label{ntk-theorem}
Assume that (1) the initialization $ w_0^i \sim \mathcal{N}(0,\nu^2 I)$, $a_0^i \sim \text{unif}\left(\{-1,+1\}\right)$ for $i=1,\dots,\hat m$; (2) the sketching matrices $\{\Omega_k\}$ are independent from the initialization; (3) each $\|x_i\|_2 = 1$, $|y_i| =\mathcal{O}(1)$ and $x_i \neq x_{i'}, \ \forall i\neq i' $. Then, under the Assumptions A.1-A.2, if $\eta_k$, $\epsilon_k$ are small enough and $\hat m=\Omega\left( \frac{n^4}{\nu^2\lambda_0^4\delta^3}\right)$, it holds with a constant $\zeta \in (0,1)$,
$$ \|f^{k} - y\|_2^2 \leq \zeta^k \|f^0 - y\|_2^2$$
 with probability $(1-\delta)\Pi_{k=0}^\infty (1-\delta_k)$ over the random initialization and the sketching.
\end{theorem}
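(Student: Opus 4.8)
The plan is to track the residual $e_k := f^k - y$ and prove it contracts geometrically by exploiting the self-correcting nature of the sketched natural-gradient preconditioner. Starting from the exact direction derived in the excerpt, $d_k = J_k^\top(\lambda_k I + \widetilde M_k)^{-1}e_k$ with $\widetilde M_k = J_k\Omega_k^\top\Omega_k J_k^\top$, and the first-order expansion $f^{k+1}-f^k = J_k(\theta_{k+1}-\theta_k) + r_k$ (where $r_k$ is the remainder caused by the ReLU nonlinearity), I substitute $\theta_{k+1}-\theta_k = -\alpha d_k$ and write $H_k := J_kJ_k^\top$ for the unsketched Gram matrix to obtain
\be
e_{k+1} = \left(I - \alpha H_k(\lambda_k I + \widetilde M_k)^{-1}\right)e_k + r_k.
\ee
The essential point is that $H_k(\lambda_k I + \widetilde M_k)^{-1}$ is close to the identity whenever $\widetilde M_k \approx H_k$ and $\lambda_k$ is small, \emph{independently} of the conditioning of $H_k$; thus I only need $H_k$ to stay invertible, not well-conditioned. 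The whole argument is then organized as an induction over $k$ whose hypothesis is $\|e_k\|_2^2 \le \zeta^k\|e_0\|_2^2$ together with a companion bound stating that each weight vector $w^i$ has not left a ball of radius $R$ around its initialization.

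To quantify the sketch, I would use Assumption~A as follows. Writing $U_k = J_k^\top = N_k C_k$ with $N_k$ the orthonormal basis from Assumption~A, so that $H_k = C_k^\top C_k$ and $\widetilde M_k = C_k^\top(N_k^\top\Omega_k^\top\Omega_k N_k)C_k$, Assumption~A.1 gives the relative spectral bound
\be
(1-\eta_k)\,H_k \preceq \widetilde M_k \preceq (1+\eta_k)\,H_k .
\ee
Hence, on the range of $H_k$, the symmetrized dynamics matrix $\widetilde M_k^{-1/2}H_k\widetilde M_k^{-1/2}$ has spectrum in $[\tfrac{1}{1+\eta_k},\tfrac{1}{1-\eta_k}]$, so for $\eta_k$ small and $\lambda_k$ small relative to $\lambda_{\min}(\widetilde M_k)$ one gets $\|I-\alpha H_k(\lambda_k I+\widetilde M_k)^{-1}\| \le 1-c\alpha$ for some $c>0$; this is the source of the contraction constant $\zeta$. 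Assumption~A.2 enters to control the sketched right-hand side (which here is essentially exact, since $g_k = J_k^\top e_k$ already lies in the range of $U_k$). The hypothesis that $\eta_k,\epsilon_k$ be small enough is precisely what keeps the sketch perturbation below the margin afforded by the exact natural-gradient step.

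To guarantee that $H_k$, and hence $\widetilde M_k$, stays invertible along the trajectory, I import the standard NTK machinery. Under hypothesis (3) the limiting kernel $H^\infty$ is strictly positive definite; set $\lambda_0 = \lambda_{\min}(H^\infty)>0$. A concentration argument over the random initialization (hypothesis (1)) shows that for large width, $\|H_0 - H^\infty\| \le \lambda_0/4$ with probability $1-\delta$, so $\lambda_{\min}(H_0)\ge 3\lambda_0/4$; a perturbation lemma then keeps $\lambda_{\min}(H_k)\ge\lambda_0/2$ as long as the weights remain in the radius-$R$ ball. Finally, because $\|d_k\| \le \|J_k^\top\widetilde M_k^{-1}\|\,\|e_k\| \lesssim \|e_k\|/\sqrt{\lambda_0}$ and the $\|e_k\|$ decay geometrically by the induction hypothesis, the total movement $\sum_k\alpha\|d_k\| \lesssim \|e_0\|/\sqrt{\lambda_0}$ is a convergent geometric series; requiring this to stay below $R$ closes the induction and dictates the width $\hat m = \Omega\!\left(n^4/(\nu^2\lambda_0^4\delta^3)\right)$.

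The hard part is closing this induction with a \emph{single} width parameter, since three error sources must be made simultaneously negligible: (a) keeping the sketched Gram $\widetilde M_k$ uniformly invertible and spectrally equivalent to $H_k$ across \emph{all} iterations, which couples the random sketch (each failing with probability $\delta_k$, forcing the union bound $\prod_{k=0}^\infty(1-\delta_k)$) with the drifting Jacobian; (b) bounding the ReLU linearization remainder $r_k$ so that it is dominated by the contraction, which hinges on controlling the fraction of neurons whose activation pattern flips, an event whose probability shrinks with width; and (c) the non-commutativity of $H_k$ and $\widetilde M_k$ when estimating $\|I-\alpha H_k(\lambda_k I+\widetilde M_k)^{-1}\|$ in operator norm, which I would handle by passing to the symmetrized form $\widetilde M_k^{-1/2}H_k\widetilde M_k^{-1/2}$. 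Reconciling all three to extract the stated polynomial width and the combined success probability $(1-\delta)\prod_{k=0}^\infty(1-\delta_k)$ is the crux of the proof.
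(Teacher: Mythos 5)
Your proof skeleton coincides with the paper's: import the NTK conditions from \cite{zhang2019fast} (positive definiteness of $G_0 = J_0J_0^\top$ and Jacobian stability on a ball of radius $3\|y-f^0\|_2/\sqrt{\lambda_{\min}(G_0)}$ around $\theta_0$, holding with probability $1-\delta$ at the stated width), prove a per-step contraction of $\|f^k-y\|_2^2$ conditional on the iterates staying in that ball, sum the resulting geometric series to bound the total parameter movement, and close with a bootstrap/contradiction argument on the intersection of the initialization event with the per-iteration sketching events, which yields the probability $(1-\delta)\Pi_{k=0}^\infty(1-\delta_k)$. The width requirement and probability bookkeeping arise exactly as you describe.

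The genuine gap is in the per-step contraction itself, precisely at your point (c), and your proposed fix does not repair it. From A.1 you correctly obtain $(1-\eta_k)H_k \preceq \widetilde M_k \preceq (1+\eta_k)H_k$ (writing $H_k = J_kJ_k^\top$, the paper's $G_k$), hence control of the \emph{spectrum} of $H_k(\lambda_k I + \widetilde M_k)^{-1}$. But this matrix is not symmetric, and passing to $(\lambda_k I + \widetilde M_k)^{-1/2}H_k(\lambda_k I + \widetilde M_k)^{-1/2}$ is a similarity transformation: it preserves eigenvalues, not the operator norm. Quantitatively, $\|I - \alpha H_k(\lambda_k I + \widetilde M_k)^{-1}\|_2 \leq \kappa\bigl((\lambda_k I + \widetilde M_k)^{1/2}\bigr)\,\|I - \alpha S_k\|_2$ with $S_k$ the symmetrized matrix, and the conjugation factor is roughly $\sqrt{\kappa(H_k)}$ --- exactly the conditioning you claim is irrelevant. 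So the asserted bound $\|I - \alpha H_k(\lambda_k I + \widetilde M_k)^{-1}\|_2 \leq 1 - c\alpha$ does not follow; alternatively, controlling the symmetric part of $H_k(\lambda_k I + \widetilde M_k)^{-1}$ forces $\eta_k \lesssim 1/\kappa(H_k)$, so conditioning re-enters through the smallness requirement on $\eta_k$ (harmless for the theorem, which permits $\eta_k$ as small as needed, but it undercuts the organizing principle of your argument). The paper sidesteps the non-symmetry entirely: in Lemma \ref{main-lemma-ntk} it writes $f^{k+1}-f^k$ as an integral along the segment $\theta(s)$ and splits off the \emph{exact} (unsketched, undamped) NGD term $-\alpha J_k^\top(J_kJ_k^\top)^{-1}(f^k-y)$, whose image under the Jacobian is literally $-\alpha(f^k-y)$, a scalar multiple of the residual, so no operator-norm subtlety arises; the sketching effect ($\|(J_kJ_k^\top)^{-1}-(J_k\Omega_k^\top\Omega_kJ_k^\top)^{-1}\|_2$), the damping effect ($d_k-\overline d_k$), and the Jacobian drift along the segment become additive errors with conditioning-dependent constants such as $\frac{\eta_k}{1-\eta_k}\frac{81J_\Lambda^4}{16\lambda_{\min}(G_0)^2}$, all made smaller than the margin by taking $\eta_k,\lambda_k$ small, after which the contraction follows from expanding $\|f^{k+1}-y\|_2^2 = \|f^k-y\|_2^2 + 2\langle f^k-y, f^{k+1}-f^k\rangle + \|f^{k+1}-f^k\|_2^2$. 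If you replace your operator-norm step with this anchor-on-exact-NGD decomposition (or accept conditioning-dependent smallness of $\eta_k,\lambda_k$ and argue via the symmetric part), the rest of your plan goes through as stated.
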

The proof of Theorem \ref{ntk-theorem} is shown in the Appendix. Here, $\lambda_0$ is the smallest eigenvalue of limiting Gram matrix. $\delta_0$ and $\delta_k$ are related to the weight initialization and sketching, respectively. The proof follows from \cite{zhang2019fast} by bounding two errors. One error is the difference between the natural gradient flow and the NGD sequence while the other is the difference between the sequence of NGD and that of the SENG. Theoretical understanding of SENG can be extended to the multi-layer fully-connected networks case similar to that in \cite{Karakida2020UnderstandingAF}.

\section{Numerical Experiments}

\label{resnet18-cifar10}
In this part, we report the numerical results of SENG and make comparisons with the state-of-the-art methods. The performance is shown on the classical neural networks ``ResNet18'' and ``VGG16\_bn'' with three commonly used datasets CIFAR10, CIFAR100 and SVHN. Furthermore, we consider the ResNet50 on ImageNet-1k classification problem and show the advantages over the SGD (with momentum) and KFAC.
% with an average of 5 different runs on the CIFAR10 dataset with the ResNet18 \cite{he2016deep} .  For a fair comparison, the batch size and the momentum are set to be 256 and 0.9, and we decrease the learning rate by 0.1 every 30 epochs for all methods. For KFAC and our methods, the frequency of updating the curvature matrix, the initial learning rate and the regularization parameters $\lambda_k$
%are set to be $50$, $0.1$ and $0.8 \times 0.6^{\lfloor
%\mathrm{epoch}/30 \rfloor}$, respectively.
Our codes are implemented in PyTorch. We run ResNet18 and VGG16\_bn experiments on one Tesla V100 GPU and ResNet50 on multiple Tesla V100 GPUs.

\begin{table*}%[t]%[!b]
\centering
\caption{Comparison of SENG on different batch sizes. We terminate the training when the top-1 testing accuracy achieves 75.9\%. BS means batch size. TT means total time. TpE means time per epoch. Scaling efficiency (SE) for each line is $\frac{512}{BS(\cdot)} \times \frac{TT(512)} { TT(\cdot)}$ or $\frac{512}{BS(\cdot)} \times \frac{TpE(512)} { TpE(\cdot)}$. }
%{\color{red} To be completed.}.}
{
%\vspace{ex}
\begin{tabular}{c|cccccc}
\hline \hline
 BS & \#GPUs & \# Epochs & Total Time (TT)  & SE (TT) & Time Per Epoch (TpE) & SE (TpE) \\ \hline
 512 & 4 & 41 &  371.5 min &  1 &542.22 s & 1  \\
1024 & 8 & 41 &  202.4 min &  0.92 & 296.12 s & 0.92 \\
 2048 & 16 &41 &  103.2 min &  0.90 &151.07 s & 0.90\\
4096 & 32 &41 & \textbf{49.7 min}  &0.93  &72.66 s &  0.93\\
 \hline \hline
\end{tabular}
}
\label{ImageNet-BS}
\vspace{-2ex}
\end{table*}

%For each task, the results of different methods are based on the same environment, i.e., both the same softwares and the same hardwares.
%When the matrix $B_k$ is set to be the trivial subsampled EFIM matrix (\ref{mb-EFIM}), we name it empirical natural gradient (ENG). {\color{red}The description of numerical results wait for the results from Peng Cheng Lab. I will update this afternoon 28/05/2020. }

\subsection{ResNet18 \& VGG16\_bn}

We demonstrate the comparison results of six tasks in this part. The compared methods are well-tuned by a grid search and the details can be found in the Appendix. We terminate the algorithms once their top-1 testing accuracy attains the given baseline. The related statistics are averages of three independent runs and shown in Table \ref{statistics-resnetvgg}.
We further show the the changes of testing accuracy and other statistics versus training time in the Appendix.
\vspace{-2ex}
\begin{table}[H]%[t]%[!b]
%\vspace{-2ex}
\scriptsize
\centering
\caption{ Comparison with SGD, ADAM and KFAC on six tasks over three independent runs. ``N'' means the methods can not attain the given accuracy and the attained best testing accuracy is reported next to it. The time are in seconds. }
%{\color{red} To be completed.}.}
{
\begin{tabular}{r|cccc}
\hline \hline
 & \multicolumn{4}{c}{VGG16\_bn} \\ \hline
& SENG &SGD & ADAM & KFAC \\ \hline
CIFAR10 TimeTo92\% & {\textbf{943.12}} & 1034.82 & N/89.4\% & 6009.86\\
Time/Epoch & 17.49 & 14.58 & 16.13 & 113.37\\ \hline
CIFAR100 TimeTo70\% & {\textbf{1088.16}} & 1168.49 & N/63.0\% & 5652.81\\
Time/Epoch & 18.14 & 15.16 & 16.50 & 113.13\\ \hline
SVHN TimeTo95\% & {\textbf{515.06}} & N/75.5\% & N/94.97\% & 7321.62\\
Time/Epoch & 24.43 & 20.30 & 22.56 & 162.57\\ \hline
 & \multicolumn{4}{c}{ResNet18} \\ \hline
& SENG &SGD & ADAM & KFAC \\ \hline
CIFAR10 TimeTo94\% & {\textbf{940.72}} & 1083.60 & N/91.3\% & 1040.24\\
Time/Epoch & 16.48 & 15.04 & 15.39 & 19.67\\ \hline
CIFAR100 TimeTo76\% &{\textbf{ 952.26}} & N/75.0\% & N/68.4\% & 1001.44\\
Time/Epoch & 16.70 & 14.99 & 15.14 & 19.27\\ \hline
SVHN TimeTo96\% & {\textbf{685.29}} & 1091.78 & N/94.7\% & 1103.17\\
Time/Epoch & 22.90 & 19.88 & 20.21 & 25.65\\ \hline \hline
\end{tabular}
}
\label{statistics-resnetvgg}
\vspace{-4ex}
\end{table}

The performance of SENG is the best in all six tasks.
Compared with first-order type methods, our SENG method takes fewer steps without too much overhead. The advantage of SENG in terms of the computational time over KFAC is also remarkable. For example, in CIFAR10 dataset with VGG16\_bn, the time per epoch of SENG is 18 seconds while that of KFAC is 113.37 seconds. There exists large fully linear layers in the end of VGG16\_bn, so that the cost of matrix inversions in the KFAC method dominates. By constrast, since the size of every matrix to be inverted in SENG is equal to the batch size, its computational time can be controlled.

\begin{table}%[t]%[!b]
%\scriptsize
\centering
\caption{Detailed Statistics on ResNet50 on ImageNet-1k when the top-1 testing accuracy achieves 75.9\%. The numbers that follows SENG and KFAC are the number of matrix update frequency.}
%{\color{red} To be completed.}.}
{
\vspace{0.5ex}
\begin{tabular}{c|ccc}
\hline \hline
&\# Epochs & Total Time  &Time Per Epoch \\ \hline
   SENG-800 & { \textbf{41}} &{  \textbf{27, 190 s}} & \textbf{ 663.17 s} \\ \hline
    SGD & 76 & 43, 707 s & { \textbf{575.09 s}} \\ \hline
  SENG-200 & 41 & 31, 224 s&761.56 s \\ \hline
  KFAC-800 & 42  & 29, 204 s &712.29 s\\ \hline
  KFAC-200 & 42  & 42, 307 s  & 1007.31 s\\ \hline
  \hline
\end{tabular}
}
\label{ImageNet-stat}
\vspace{-4ex}
\end{table}
\subsection{ResNet50 on ImageNet-1k}
The training of ResNet50 on ImageNet-1k dataset is one of the base experiments in MLPerf \cite{mattson2019mlperf}.
%The numerical performance on the task is an important criterion to evaluate the new proposed algorithms.
We compare SENG with KFAC and SGD, and terminate the training process of all the three methods once the Top-1 testing accuracy equals or exceeds 75.9\% as in the MLPerf requirement. The comparison with ADAM is not reported because it does not perform well in our numerical results.

We report the testing accuracy, training accuracy and training loss versus training time in Figure \ref{imagenet-time}. The batch size is chosen to be $256$ for all the three methods and does not change during the training process.  As shown in Figure \ref{imagenet-time}, SENG performs best in the total training time and only takes 41 epochs to get 75.9\% Top-1 testing accuracy. Detailed statistics can be found in Table \ref{ImageNet-stat}. We can see the time per epoch of SENG is close to that of SGD while the number of epoch is much smaller. SENG is faster than KFAC using the same number of matrix update frequency because SENG does not require expensive matrix inversions. Note that the SGD reported here uses the cosine learning rate and its Top-1 testing accuracy can exceed 75.9\% within 76 epochs, which is a well-tuned version. Detailed tuning strategies for SENG and KFAC can be found in the Appendix.
%The performance of SENG can be better if a suitable tuning strategy can be found and the implementation is further optimized.

\subsection{Scaling Efficiency}
It has been widely known that large batch training will lead to performance degradation. In this part, we investigate the scalability  of our proposed SENG. We start by running the codes with 4, 8 and 16 GPUs on one node, and then run the distributed version on 32 GPUs across 2 nodes. The results are shown in Table \ref{ImageNet-BS}. We can see that the GPU scaling efficiency is over 90\% in terms of both total time and  time per epoch.  The results show the great potential of SENG in the distributed large-batch training in practice.

It is noticed that SENG with the batch size 4, 096 can attain top-1 testing accuracy 75.9\% within 41 epochs and takes 49.7 minutes. The number of epoch is the same on all batch sizes, which illustrates the effectiveness of SENG with large batch.
Compared with the results reported in \cite{9123671} where SP-NGD with a batch size 4, 096 but on 128 Tesla V100 GPUs takes 32.5 minutes to top-1 testing accuracy 74.8\%, our results are also reasonable. With more computational resources, it is expected that SENG can attain the given testing accuracy within less training time. The detailed hyper-parameters are reported in the Appendix.
\section{Conclusion}
In this paper, we develop efficient sketching techniques for the empirical natural gradient method for deep learning problems.  Since the EFIM is usually low-rank, the corresponding direction is actually a linear combination of the subsampled gradients based on the SMW formula. For layers whose number of parameters is not huge,  we construct a much smaller least squares problem by sketching on the subsampled gradients. Otherwise, the quantities in the SMW formula is computed by using the matrix-matrix  representation of the gradients. We first approximate them by low-rank matrices, then use sketching methods to compute the expensive parts. Global convergence is guaranteed under some standard assumptions and a fast linear convergence is analyzed in the NTK regime. Our numerical results show that the empirical natural gradient method with randomized techniques can be quite competitive with the state-of-the-art methods such as SGD and KFAC. Experiments on the distributed large-batch training illustrate that the scaling efficiency of SENG is quite promising.

\textbf{Acknowledgments}  M. Yang, D. Xu and Z. Wen are supported in part by Key-Area Research and Development Program of Guangdong Province (No.2019B121204008),  the NSFC grants 11831002 and Beijing Academy of Artificial Intelligence.
\nocite{langley00}

\bibliography{reference}
\bibliographystyle{icml2021}

%\iffalse
\appendix
\onecolumn

\section{Implementation Details}
The statistics of the datasets used in Section 6 are listed in Table \ref{SENG:datasets}.
\begin{table}[H]
\centering
\begin{tabular}{ccccc}%{|p{10ex}p{8ex}p{8ex}|}
\toprule
Dataset & \# Training Set & \# Testing Set   \\ %&sp. (\%) \\
\midrule
CIFAR10 	& 50,000		& 10,000  \\ %	& 99.84 \\
CIFAR100	&50,000		& 10,000  \\
SVHN &73,257 & 26,032 \\
ImageNet-1k &1,281,167 & 50,000\\
\bottomrule
\end{tabular}
 \caption{The Datasets Information.}
\label{SENG:datasets}
\end{table}
We use the official implementation of VGG16\_bn, ResNet18 and ResNet50 (also known as ResNet50 v1.5) in PyTorch. The detailed network structures can be found in the websites: \url{https://pytorch.org/docs/stable/_modules/torchvision/models/resnet.html} and \url{https://pytorch.org/docs/stable/_modules/torchvision/models/vgg.html}.

We next describe the tuning schemes of hyper-parameters. The learning rate is very important for performance and we mainly consider the following  two schemes.
\begin{itemize}
\item cosine: Given the max\_epoch and the initial learning rate $\alpha_0$, the learning rate $\alpha_k$ at the $k$-th epoch is changed as:
\[ \alpha_k = 0.001 + 0.5 * (\alpha_0 - 0.001) * (1 + \text{cos(epoch\_k / max\_epoch} * \pi)).\]
\item exp: Given the max\_epoch, decay rate $\mathcal{\widetilde D}$ and the initial learning rate $\alpha_0$, the learning rate $\alpha_k$  at the $k$-th epoch is changed as:
\[ \alpha_k = \alpha_0 * (1 - \text{epoch\_k/max\_epoch)}^{\mathcal{\widetilde D}}.\]
\end{itemize}
\begin{itemize}
\item For ResNet18 and VGG16 on the three datasets, i.e., CIFAR10, CIFAR100 and SVHN:
\begin{itemize}
\item  Adam
\begin{itemize}
\item The initial learning rate is chosen from \{1e-4, 2e-4, 5e-4, 1e-3, 2e-3, 5e-3,1e-2, 2e-2, 5e-2, 1e-1\}.
\item The parameters $\beta_1$ and $\beta_2$ are selected in \{0.9,0.99\} and $\{0.99,0.999\}$, respectively.
\item The weight decay is chosen from \{5e-4, 2e-4,1e-4\}.
\item The perturbation value $\epsilon$ is 1e-8.
\end{itemize}
\item SGD (with momentum): we use the best results from the cosine and exp schemes.

\begin{itemize}
\item For the cosine scheme, the hyper-parameters is tuning as follows:
\begin{itemize}
\item The initial learning rate is from \{1e-3, 5e-3, 1e-2, 5e-2, 1e-1\}.
\item The max\_epoch is tuned from \{85, 90\}.
\item The weight decay is chosen from \{5e-4, 2e-4,1e-4\}.
\item The momentum is set to be 0.9.
\end{itemize}
\item For the exp scheme, the hyper-parameters is tuning as follows:
\begin{itemize}
\item The initial learning rate in from \{1e-3, 5e-3, 1e-2, 5e-2, 1e-1\}.
\item The max\_epoch is tuned from \{70, 75, 80\}.
\item The decay rate is tuned from \{4, 5, 6\}.
\item The weight decay is tuned from \{5e-4, 2e-4,1e-4\}.
\item The momentum is set to be 0.9.
\end{itemize}
\end{itemize}
\item KFAC and SENG use the same grid search strategies as SGD. In addition, the damping parameter for both methods is chosen from \{1.5, 2.0, 2.5\} for VGG16 and from \{ 0.8, 1.0,1.2\} for ResNet18. KFAC updates the covariance matrix to be inverted every 200 iterations. Similarly, SENG updates the matrix $U$ at the same frequency. For convenience of notations, we call them the matrix update frequency for both methods. They are critical for the performance since the related operations are expensive.
\end{itemize}
\item For ResNet50 on ImageNet-1k, we use the linear warmup strategy  \cite{goyal2017accurate} in the first 5 epochs for SGD, KFAC and SENG, then use the cosine or exp learning rate strategy.
\begin{itemize}
\item SGD refers to a well-tuned cosine learning rate strategy in the website \footnote{\url{https://gitee.com/mindspore/mindspore/blob/r0.7/model_zoo/official/cv/resnet/src/lr_generator.py}} and the result reported here achieves top-1 testing accuracy 75.9\% within 76 epochs. This is better than the result in terms of epoch in \cite{goyal2017accurate} where they use the diminishing learning rate strategy and need nearly 90 epochs.
\item KFAC uses the exp strategy after the first 5 epochs. The initial learning rate is from \{0.05, 0.1, 0.15\} and the damping is from  \{0.05, 0.1, 0.15\} and we report the best results among them.
\item SENG uses the exp strategy after the first 5 epochs. The initial learning rate is 0.145 and the damping is 0.17.
\item Note that both SENG and KFAC are not sensitive to damping and initial learning rate. The weight decay for SENG and KFAC are chosen the best from \{5e-4, 3e-4, 2e-4, 1e-4\}. We also consider the cosine strategy for KFAC and SENG, but it does not work well.
\end{itemize}
\item For large-batch training, the detailed hyper-parameters for each batch-size are listed in Table \ref{large-bs-table}.
\begin{table}[H]
\centering
\begin{tabular}{cccccc}%{|p{10ex}p{8ex}p{8ex}|}
\toprule
Batch Size & $\alpha_{\text{warmup}}$ & $\alpha_0 $ & decay rate &max\_epoch & damping $\lambda$ \\ %&sp. (\%) \\
\midrule
512 	& 0.01		& 0.3&6  &60 & 0.17$\cdot (0.9)^{epoch/10}$  \\ %	& 99.84 \\
1, 024	& 0.01		& 0.6&6&60 &0.17$\cdot (0.9)^{epoch/10}$ \\
2, 048 &0.2&1.2 &6&60&0.3$\cdot (0.7)^{epoch/10}$\\
4, 096 &0.2&2.2 &5&55&0.3$\cdot (0.7)^{epoch/10}$\\
\bottomrule
\end{tabular}
 \caption{Detailed hyper-parameters for different batch sizes.}
\label{large-bs-table}
\end{table}

\end{itemize}

\section{Further results in section 6}
We give the other statistics on the six tasks in Figure \ref{shallownet}.
The comparison results with KFAC and SGD in section 6.2 with respect to other criteria are listed in Figure \ref{imagenet-epoch}. The performance difference of SENG and KFAC in term of epoch is not significant since the matrix update frequency does not have a strong effect on the performance and the the changes of learning rate of both methods are very similar. The variants of SENG with different matrix update frequency are shown in Figure \ref{imagenet-freq-time}. The detailed results of SENG are reported in Table \ref{ImageNet-freq-stat}.

\begin{table}[ht]%[t]%[!b]
\centering
\caption{Statistics of different matrix update frequency variants of SENG on ResNet50/ImageNet-1k when the top-1 Testing Accuracy achieves 75.9\%.}
%{\color{red} To be completed.}.}
{
\vspace{2ex}
\begin{tabular}{c|cccc}
\hline
 Frequency (SENG) & 100 &200 & 500 & 800 \\ \hline
\# Epoch & { \textbf{40}}& 41& 42 & 41\\
Total Time  & 38, 586 s  & 31, 224 s & 29, 332 s & \textbf{27, 190 s} \\
 Time Per Epoch & 964.65 s & 761.56 s & 698.38 s  & \textbf{663.17 s} \\ \hline
\end{tabular}
}
\label{ImageNet-freq-stat}
\vspace{2ex}
\end{table}
\begin{figure}[H]
\caption{Numerical Comparison on six tasks in section 6.1.
%Upper Three: The numerical comparison with KFAC and SGD. Lower Three: The performance between SENG variants with different matrix updating frequency. The number behind SENG is the number of frequency.
}
\centering
\vspace{0.5ex}
\begin{tabular}{ccc}
\includegraphics[width=0.3\textwidth]{./fig/results-resnet18-cifar10-Testing-Accu-Time.eps}
\includegraphics[width=0.3\textwidth]{./fig/results-resnet18-cifar100-Testing-Accu-Time.eps}
\includegraphics[width=0.3\textwidth]{./fig/results-resnet18-svhn-Testing-Accu-Time.eps}\\
\includegraphics[width=0.3\textwidth]{./fig/results-vgg16-cifar10-Testing-Accu-Time.eps}
\includegraphics[width=0.3\textwidth]{./fig/results-vgg16-cifar100-Testing-Accu-Time.eps}
\includegraphics[width=0.3\textwidth]{./fig/results-vgg16-svhn-Testing-Accu-Time.eps}\\
\includegraphics[width=0.3\textwidth]{./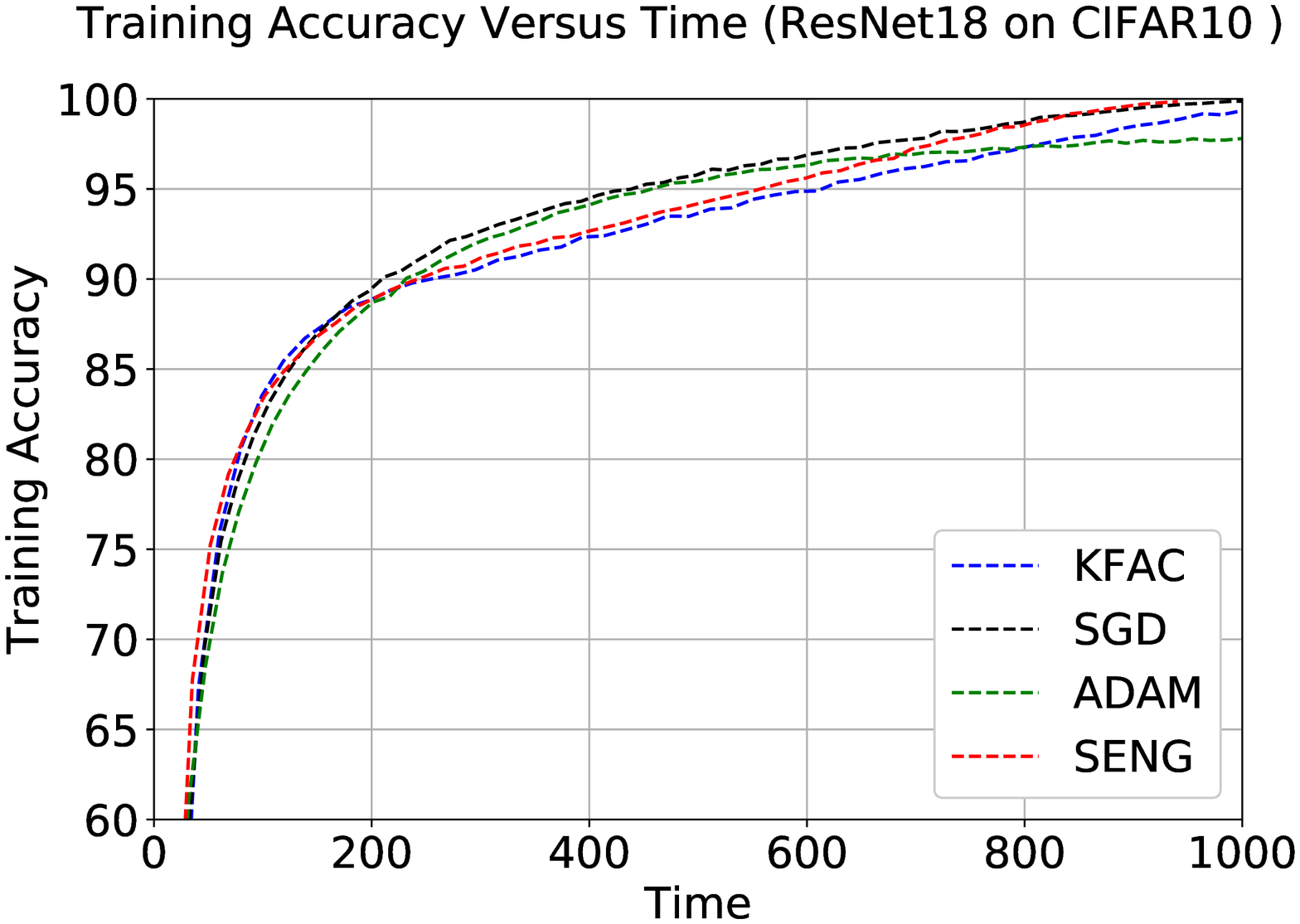}
\includegraphics[width=0.3\textwidth]{./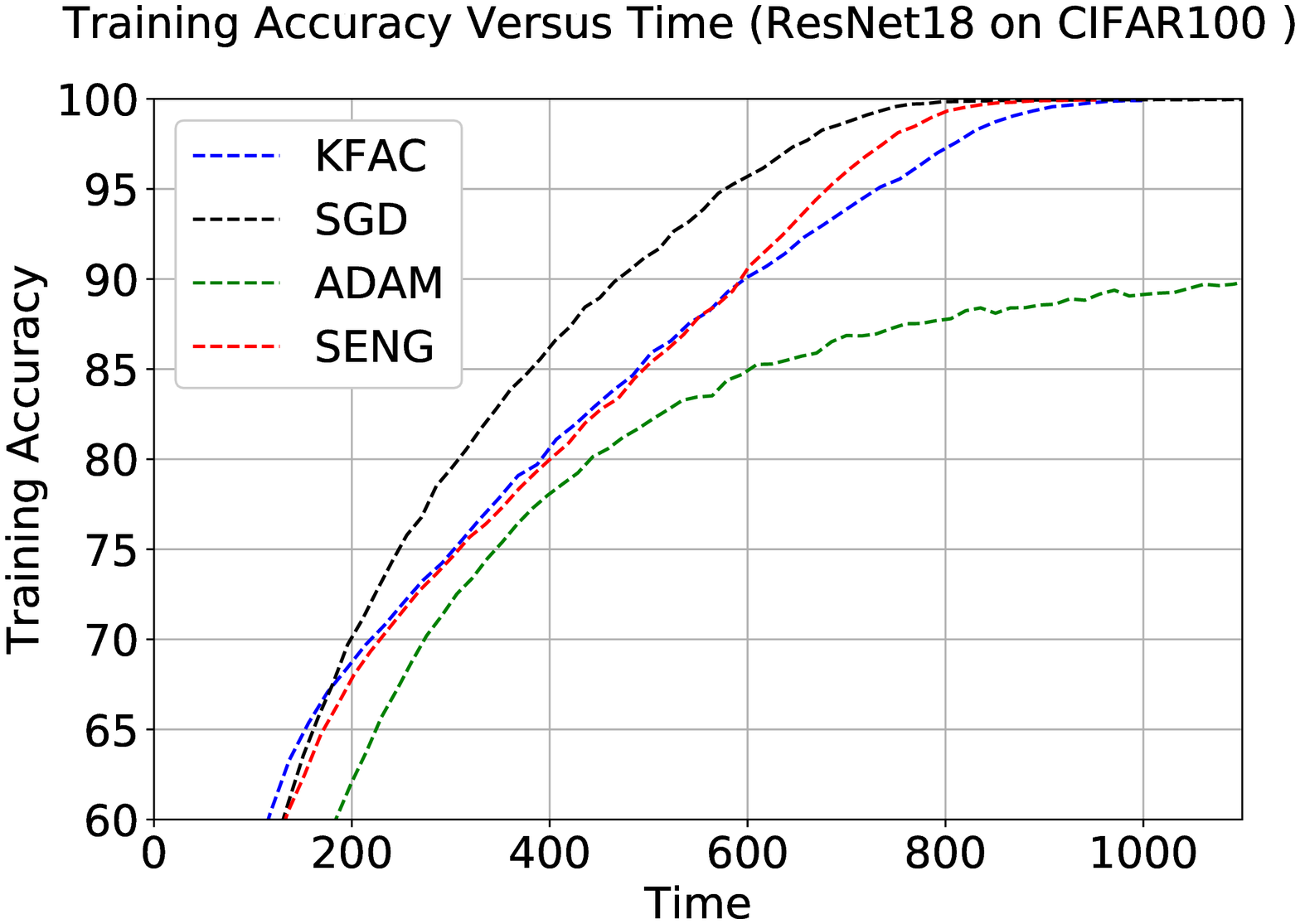}
\includegraphics[width=0.3\textwidth]{./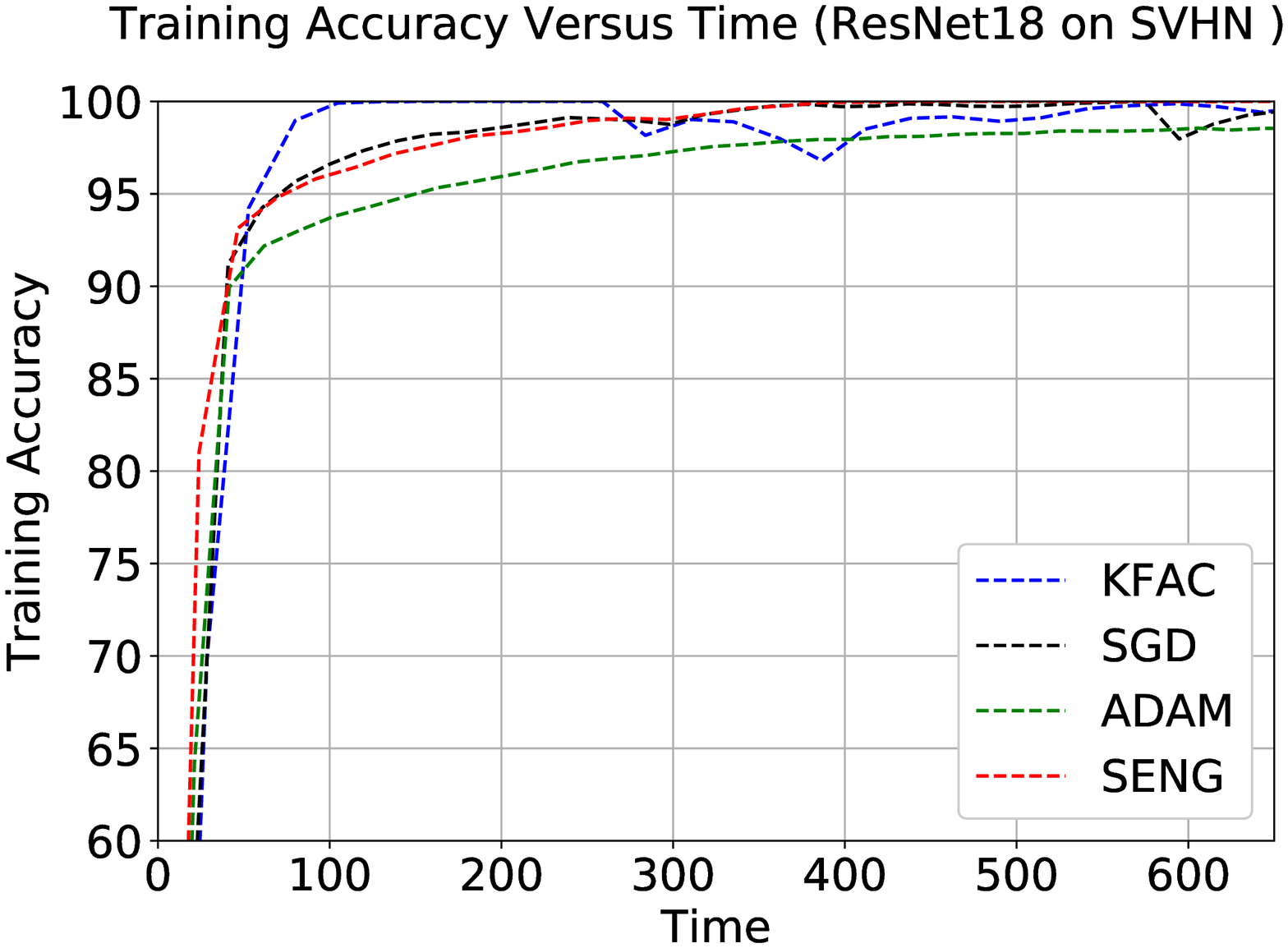}\\
\includegraphics[width=0.3\textwidth]{./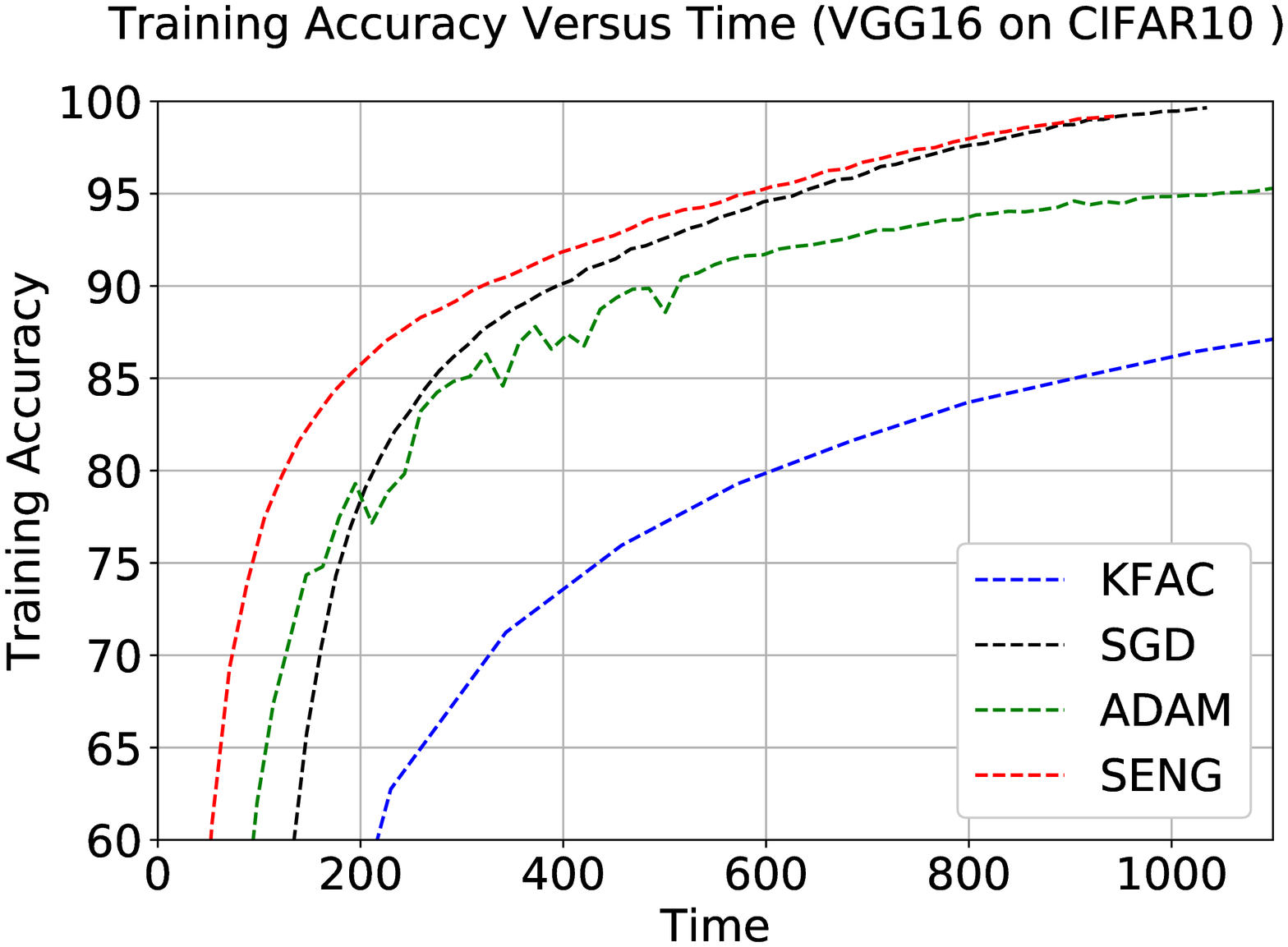}
\includegraphics[width=0.3\textwidth]{./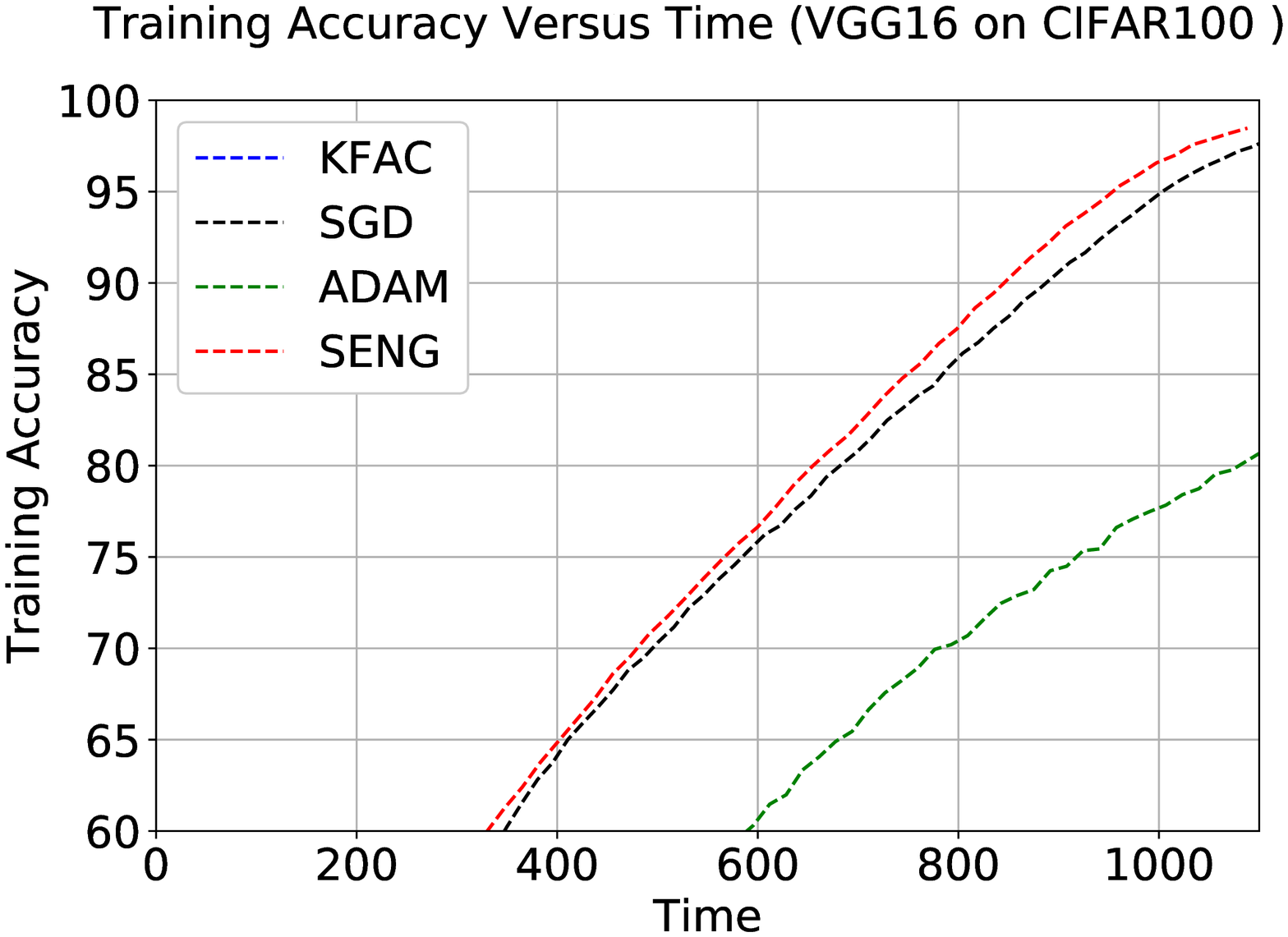}
\includegraphics[width=0.3\textwidth]{./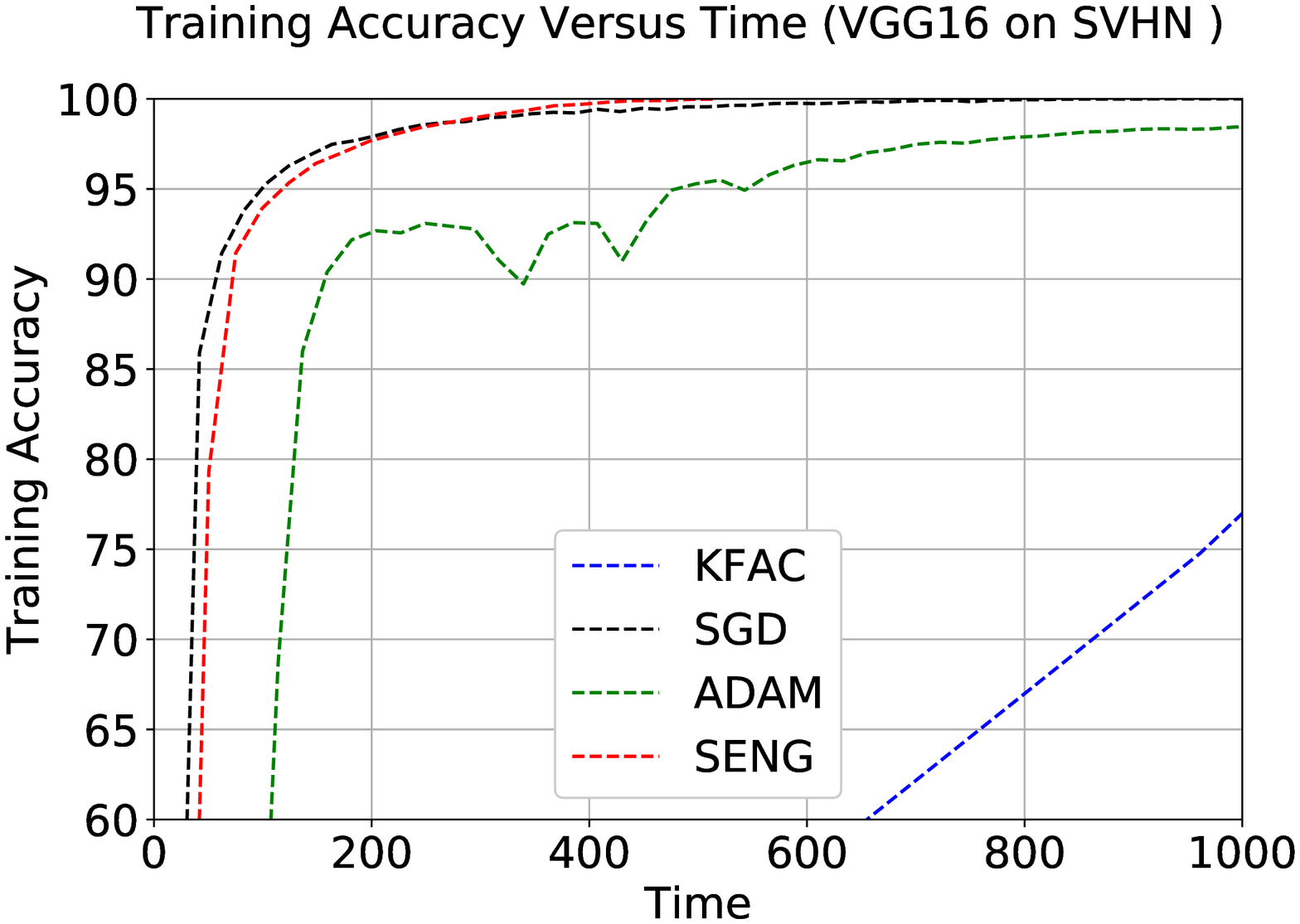}\\
\includegraphics[width=0.3\textwidth]{./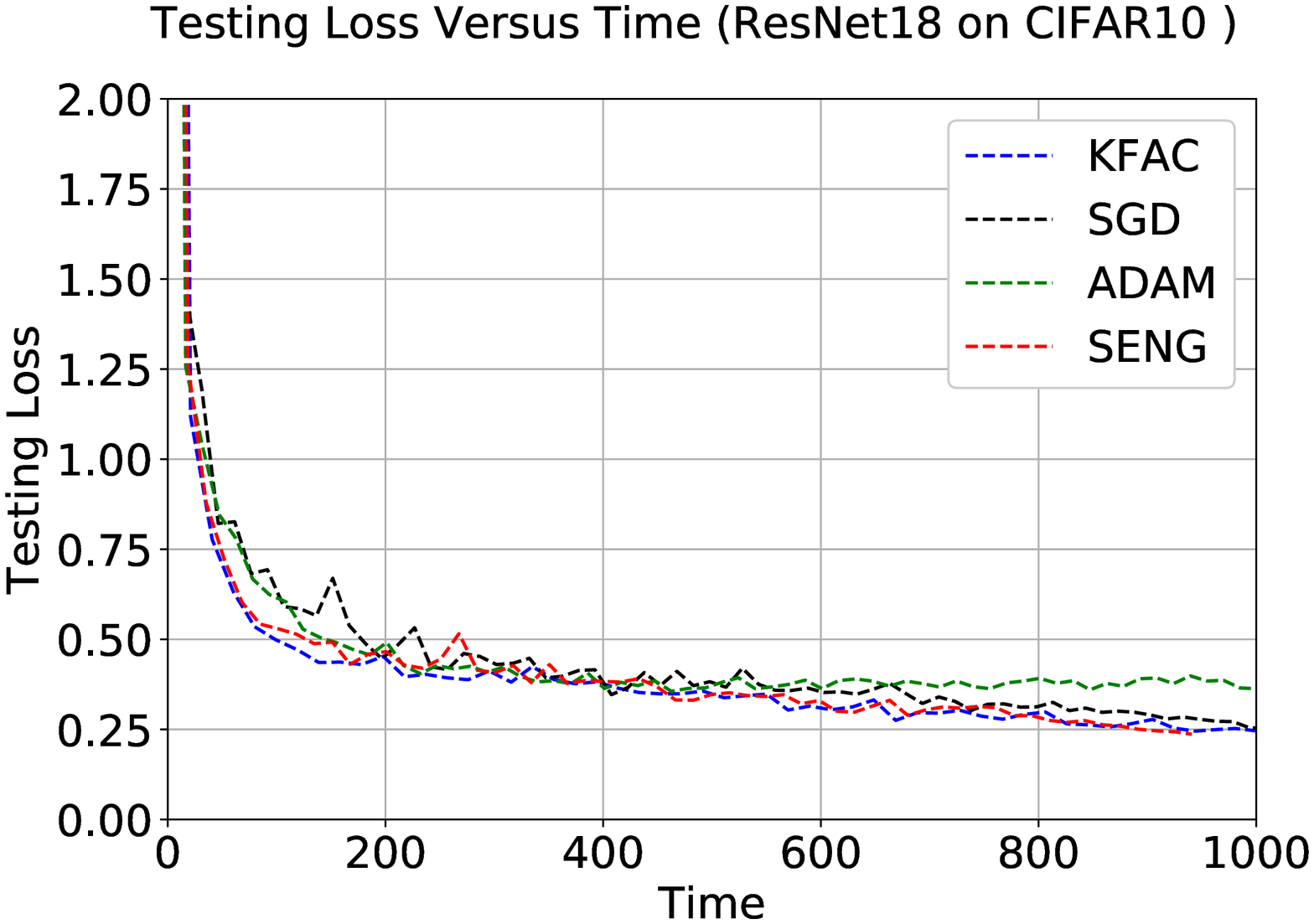}
\includegraphics[width=0.3\textwidth]{./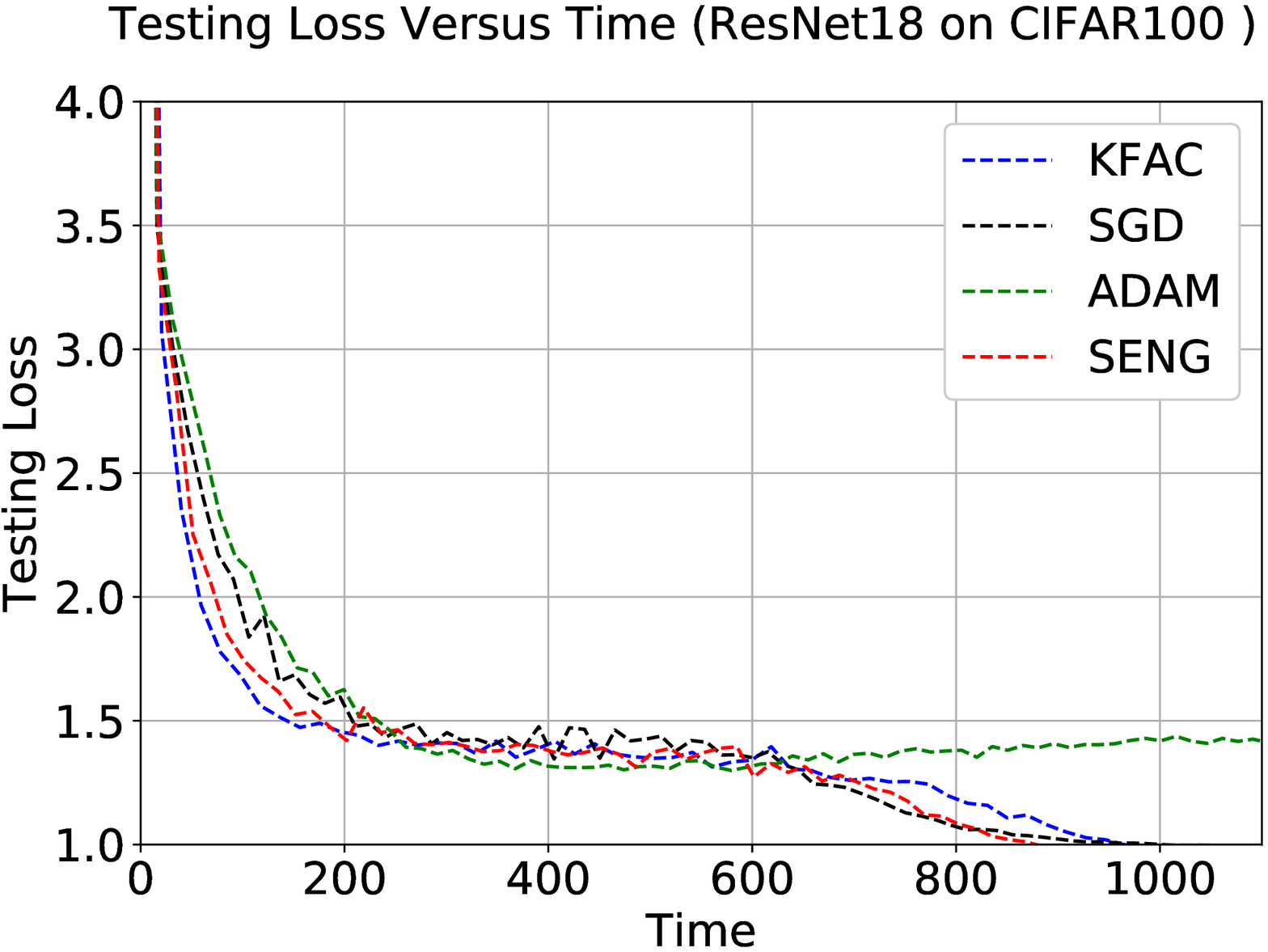}
\includegraphics[width=0.3\textwidth]{./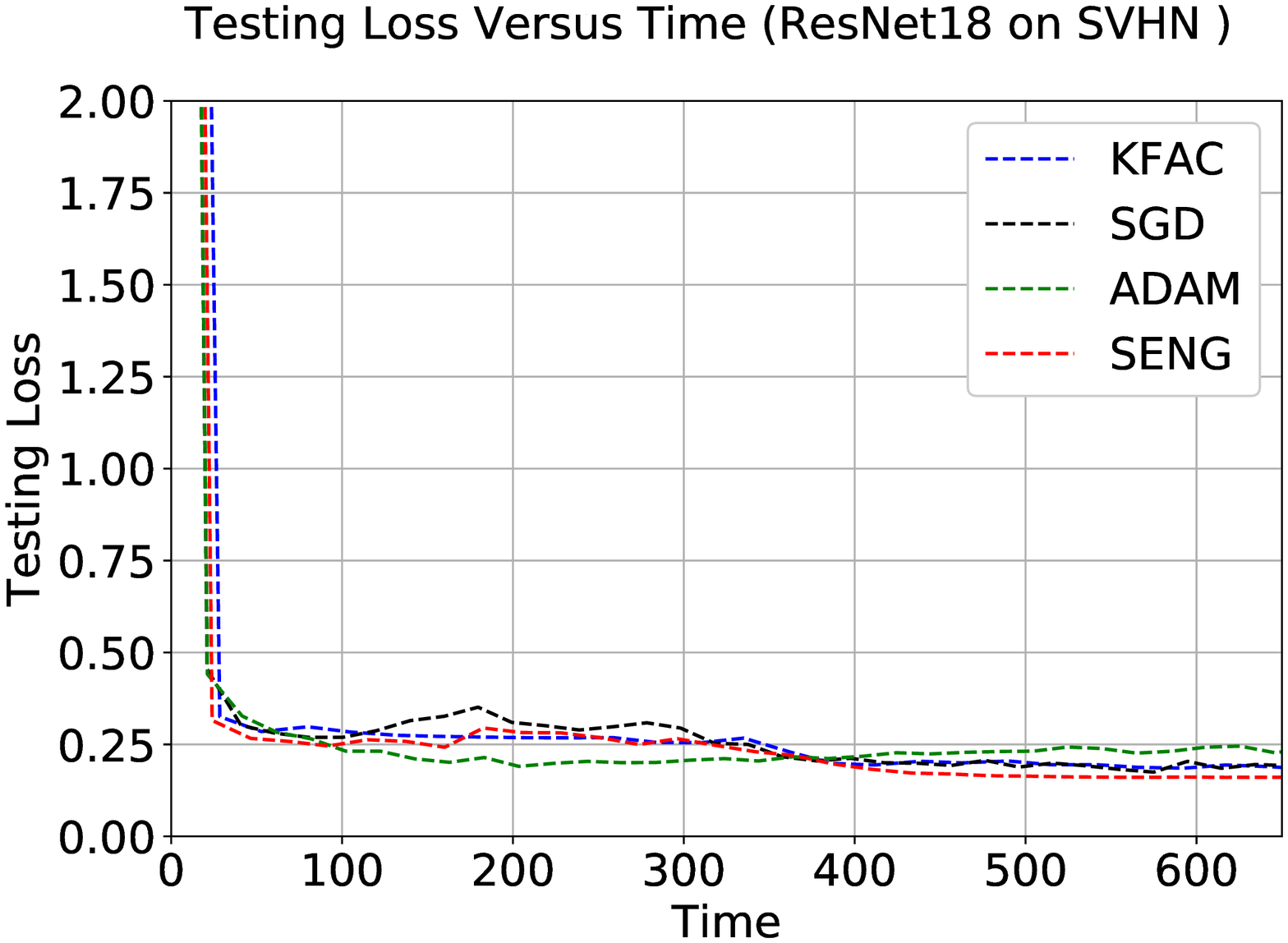}\\
\includegraphics[width=0.3\textwidth]{./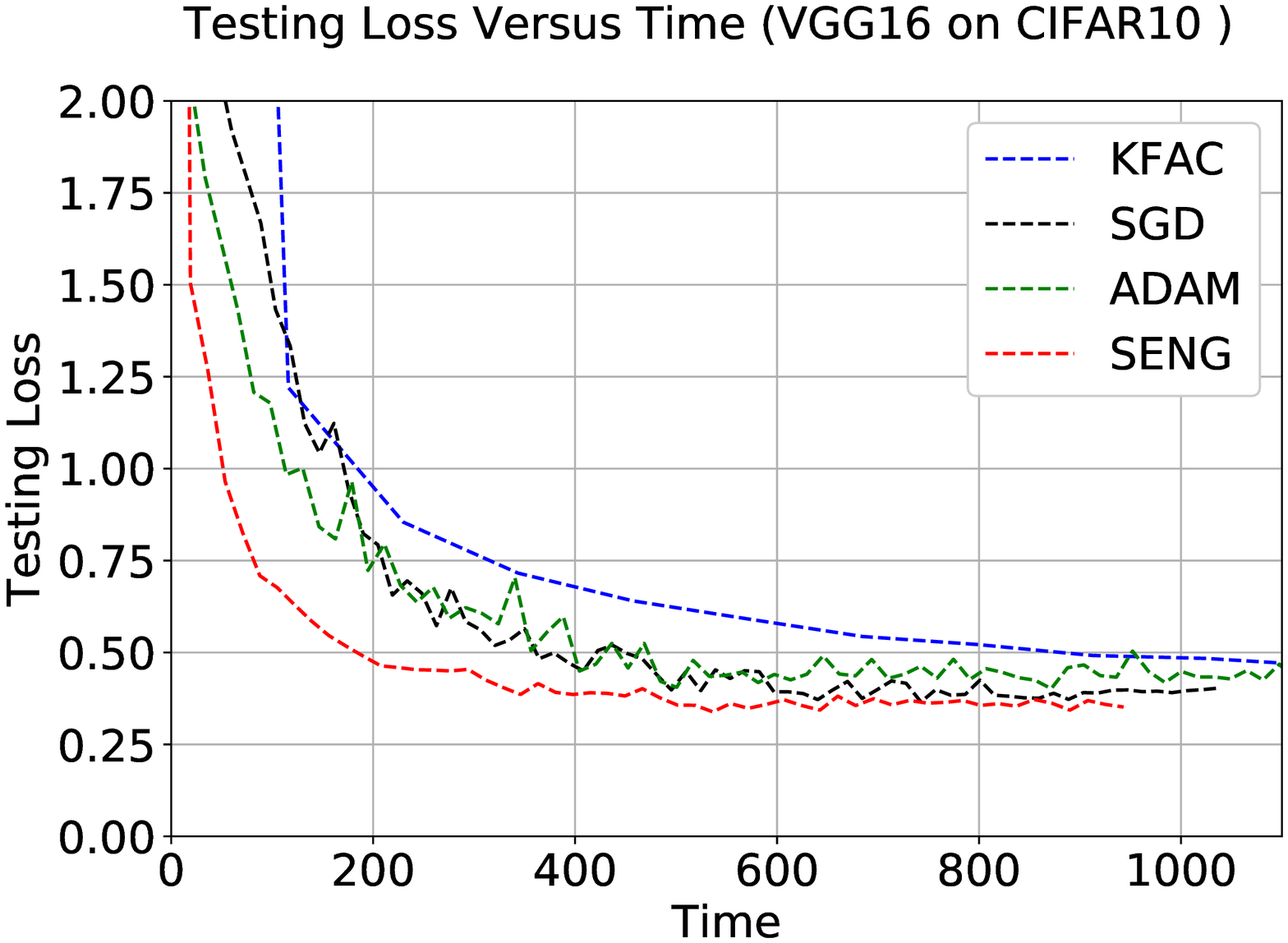}
\includegraphics[width=0.3\textwidth]{./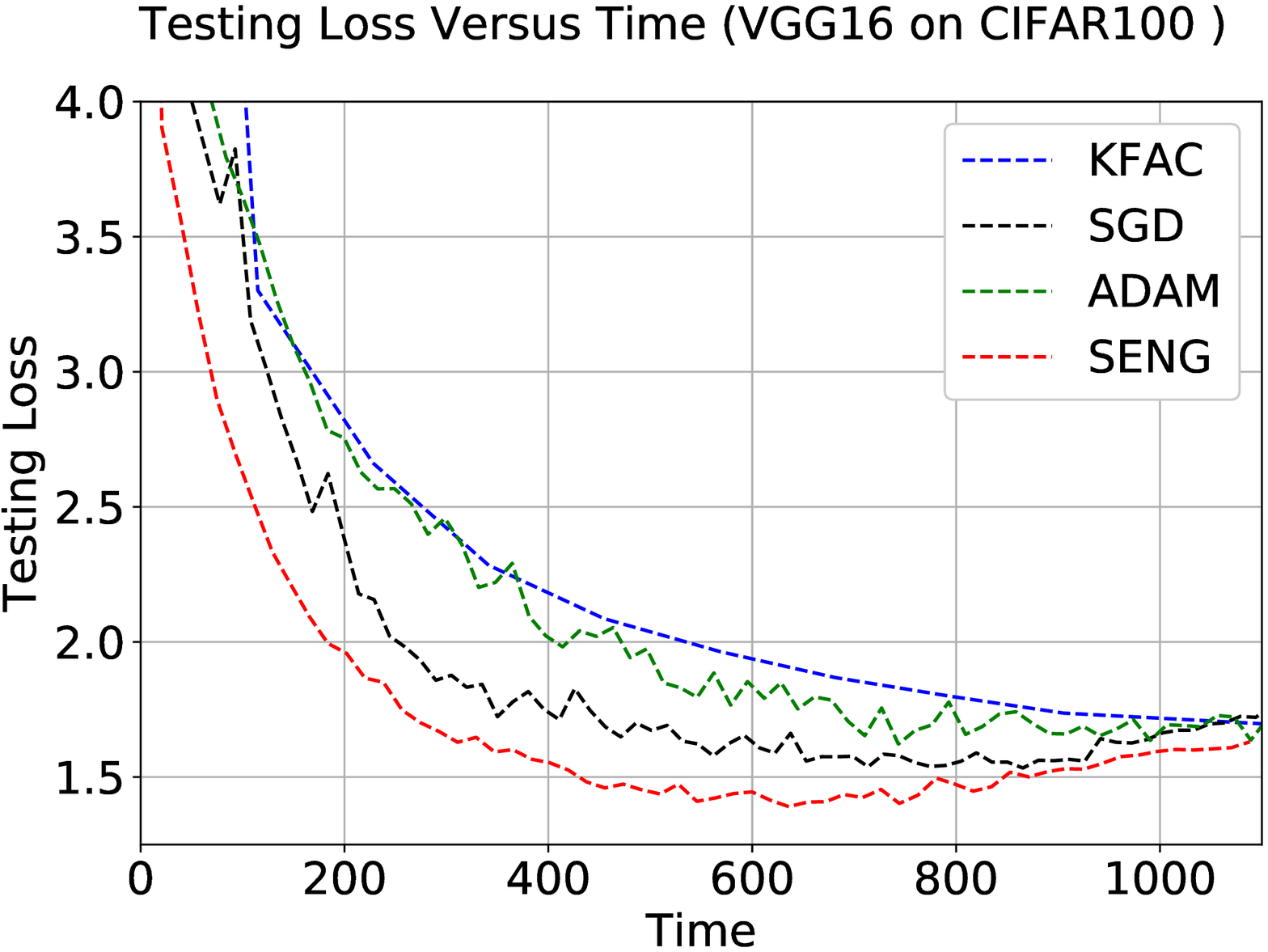}
\includegraphics[width=0.3\textwidth]{./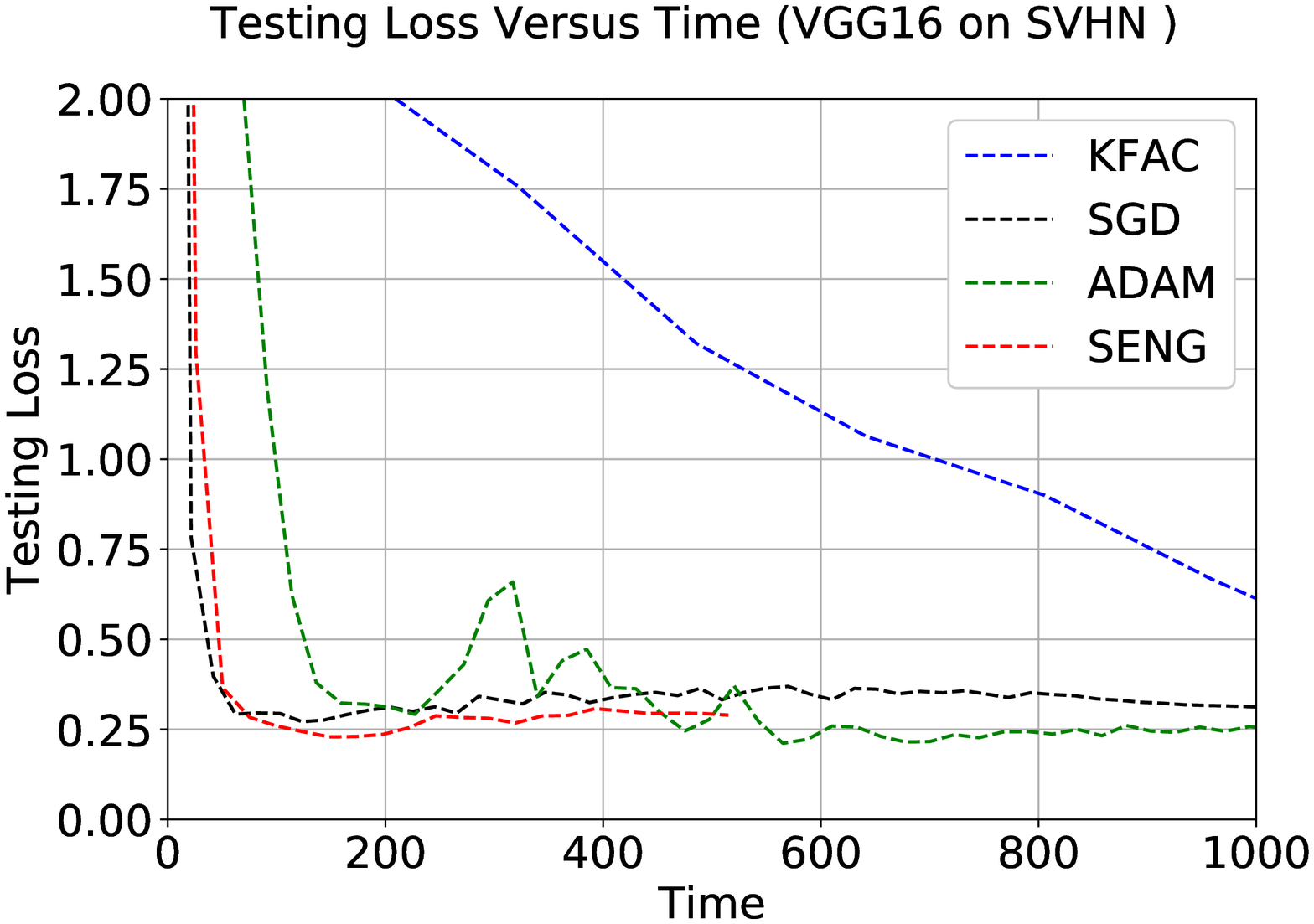}\\
\end{tabular}
\label{shallownet}
\end{figure}

\begin{figure}[H]
\caption{Numerical Comparison on ResNet50 on ImageNet-1k.
%Upper Three: The numerical comparison with KFAC and SGD. Lower Three: The performance between SENG variants with different matrix updating frequency. The number behind SENG is the number of frequency.
}
\centering
%\vspace{0.5ex}
\begin{tabular}{ccc}
\includegraphics[width=0.4\textwidth]{./fig/results-resnet50-Time-Training-Loss.eps}
\includegraphics[width=0.4\textwidth]{./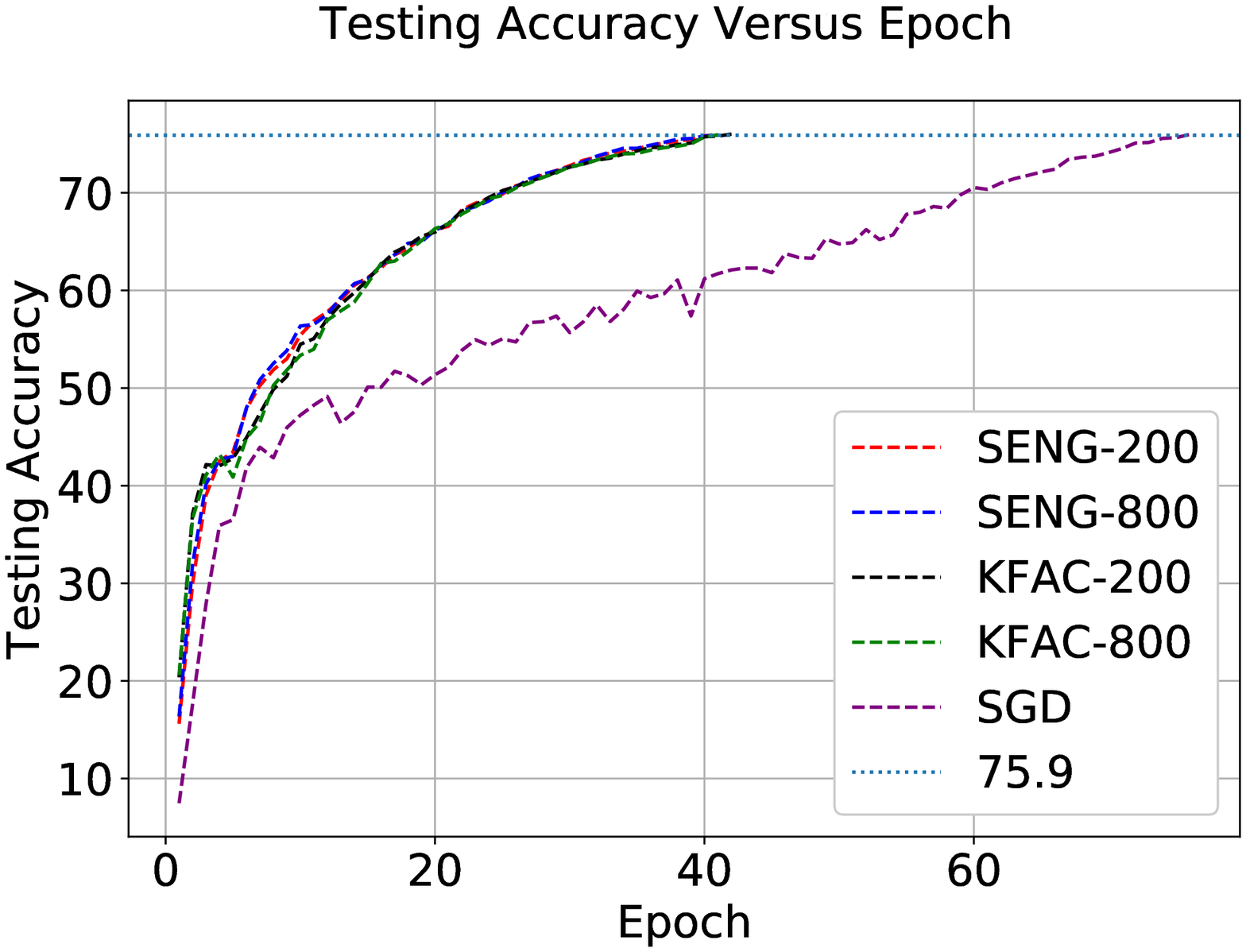}\\
\includegraphics[width=0.4\textwidth]{./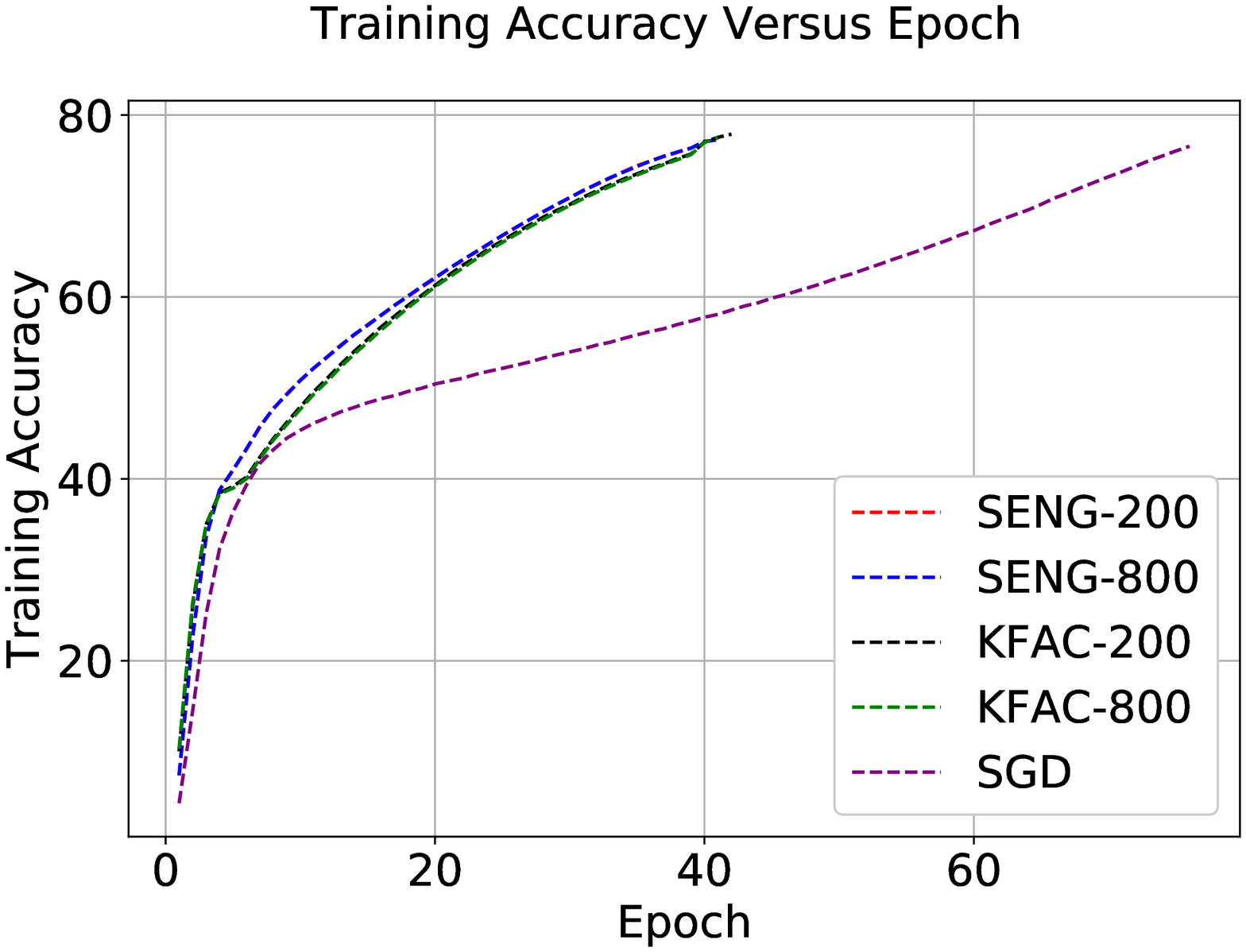}
\includegraphics[width=0.4\textwidth]{./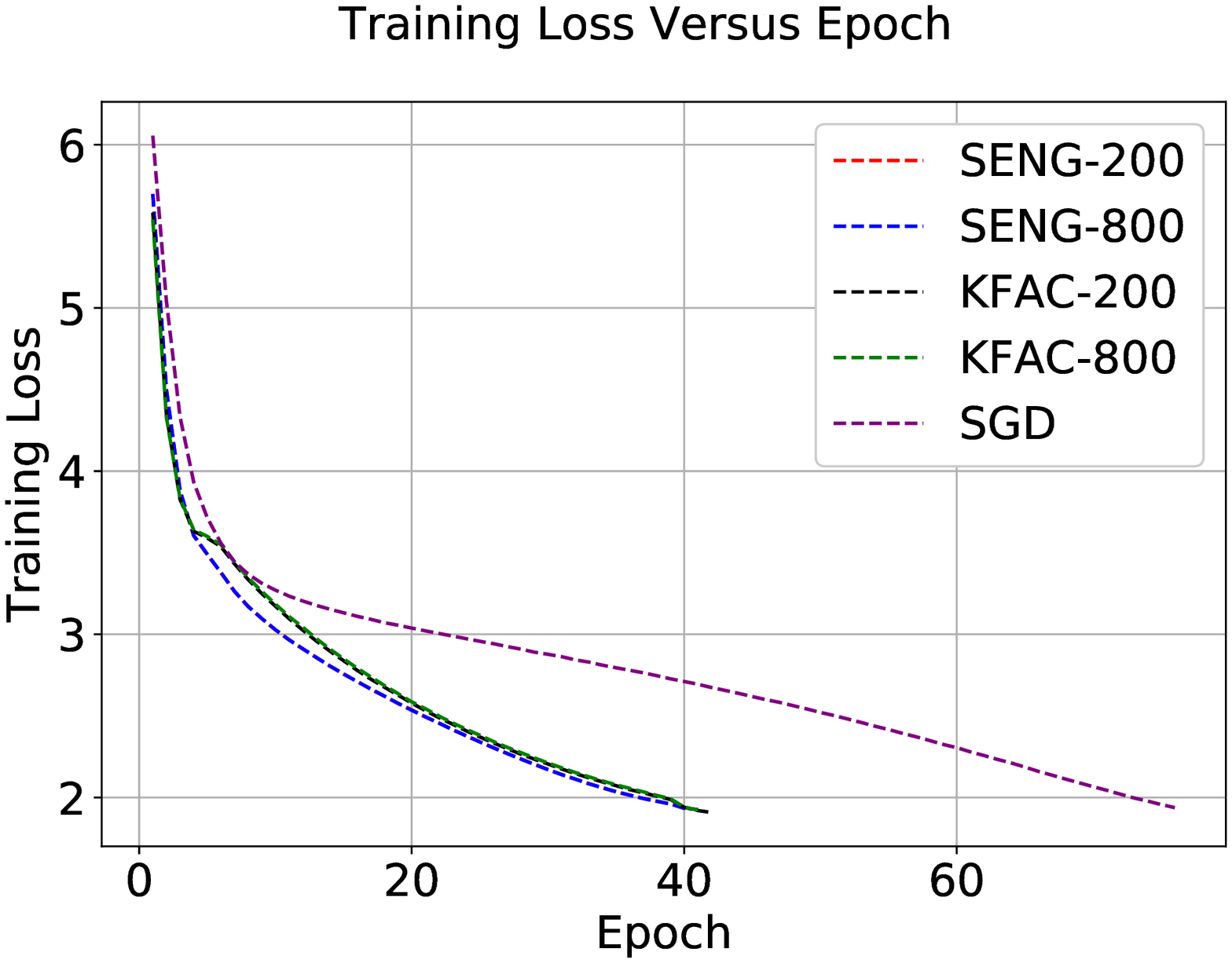}\\
\end{tabular}
\label{imagenet-epoch}
\vspace{-2ex}
\end{figure}

\begin{figure}[H]
\caption{Numerical performance on ResNet50 on ImageNet-1k.
The number behind SENG is the number of matrix update frequency.
}
\centering
\vspace{0.5ex}
\begin{tabular}{cccc}
\includegraphics[width=0.4\textwidth]{./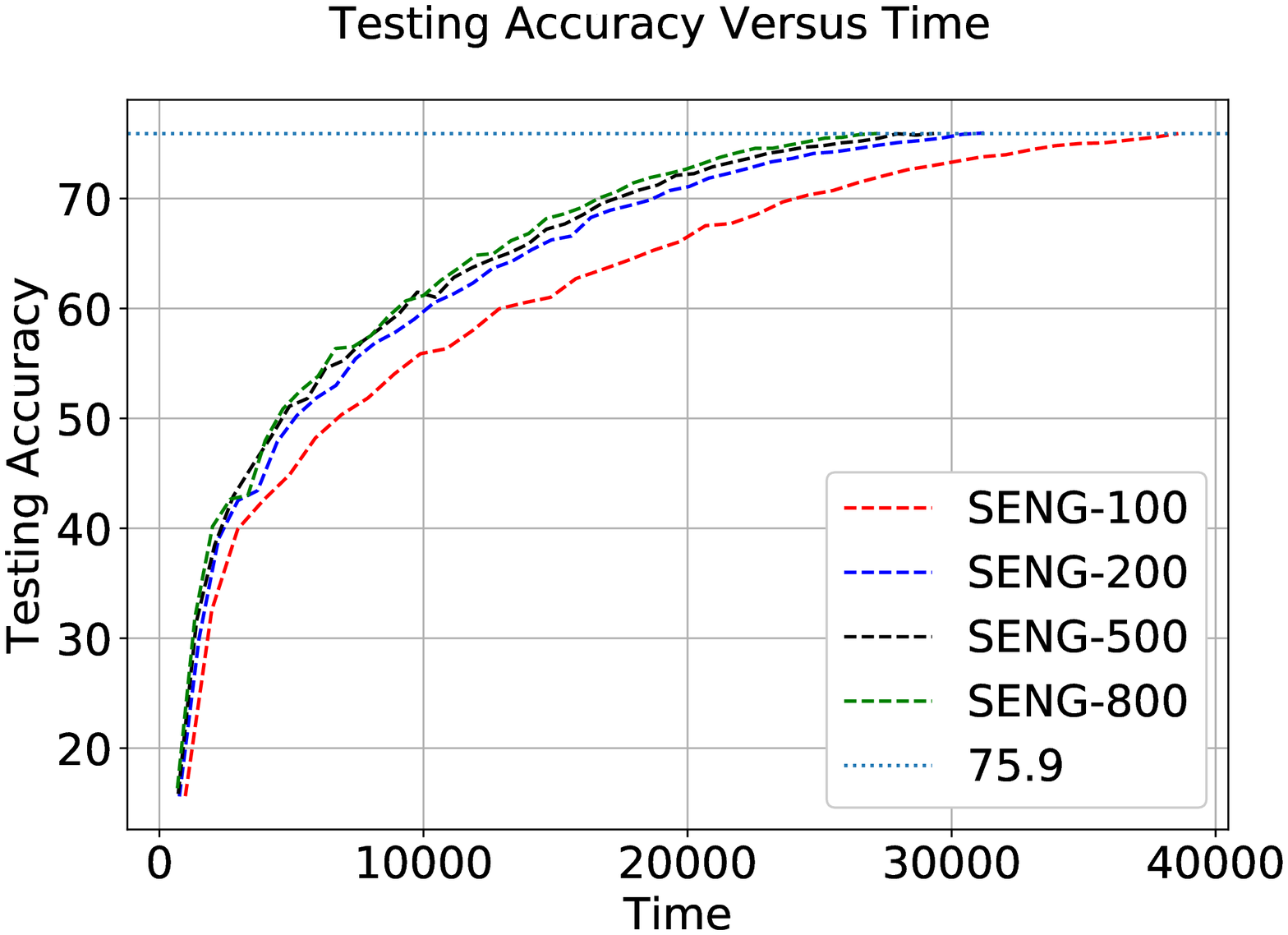}
\includegraphics[width=0.4\textwidth]{./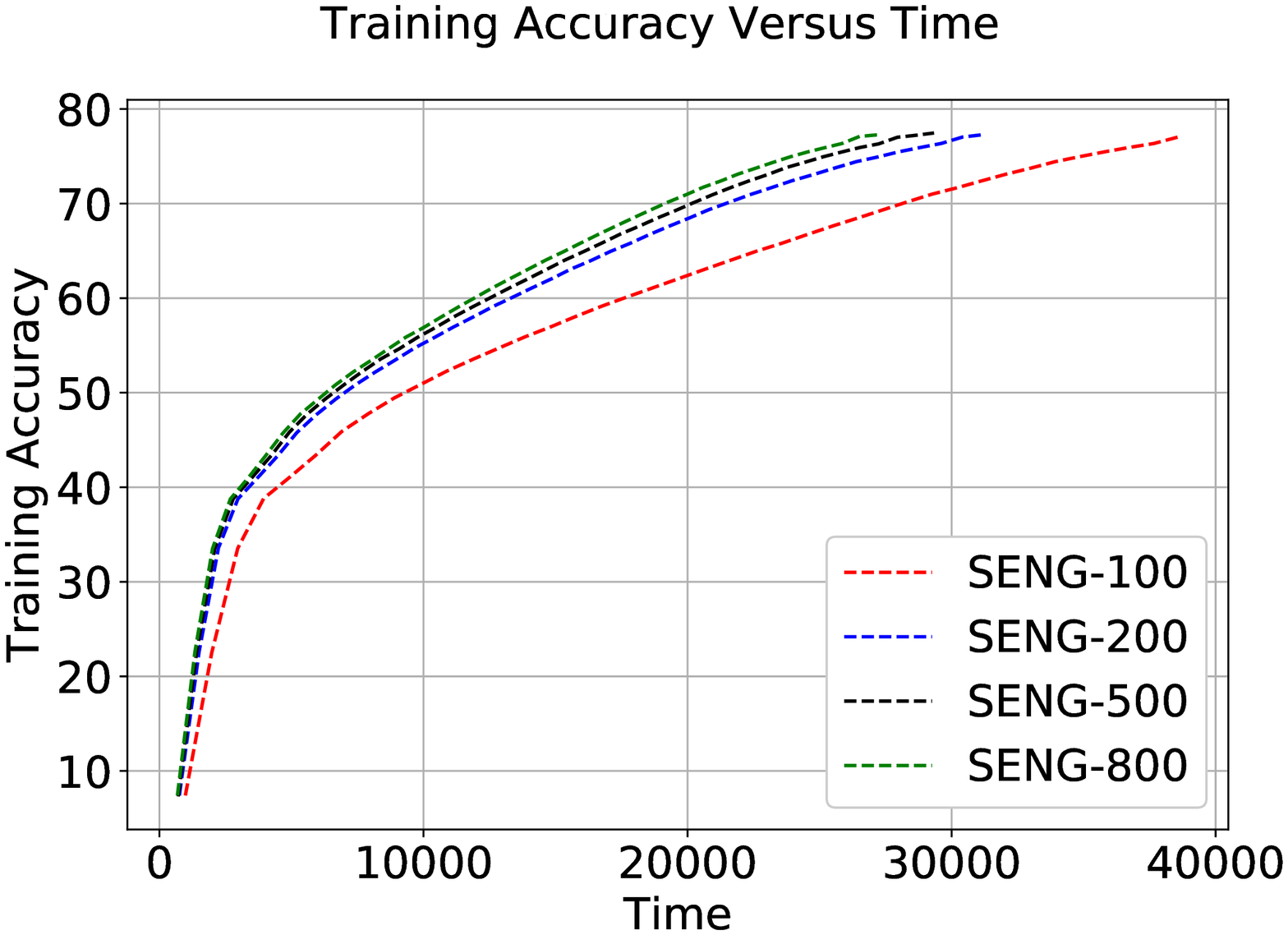}\\
\includegraphics[width=0.4\textwidth]{./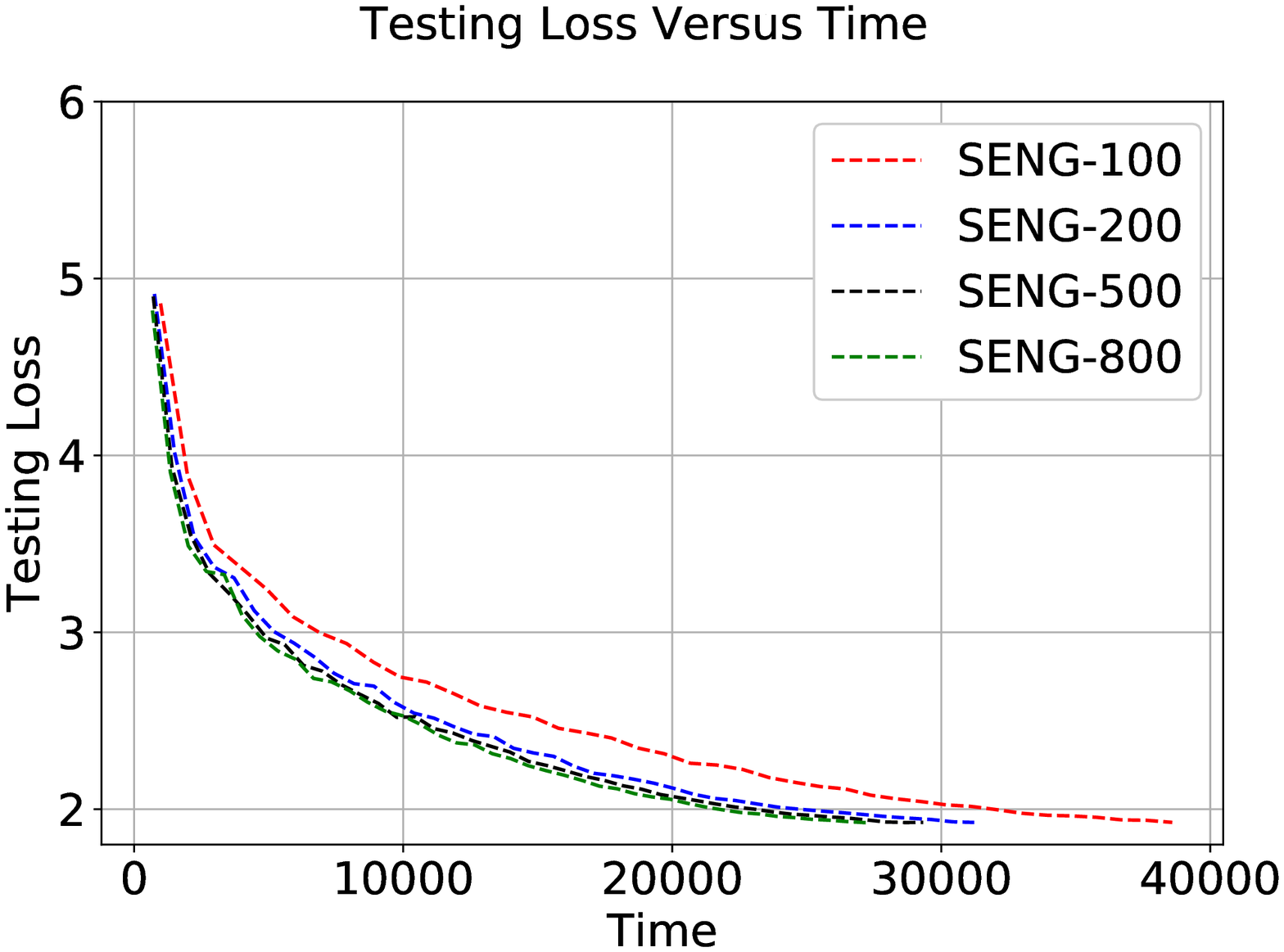}
\includegraphics[width=0.4\textwidth]{./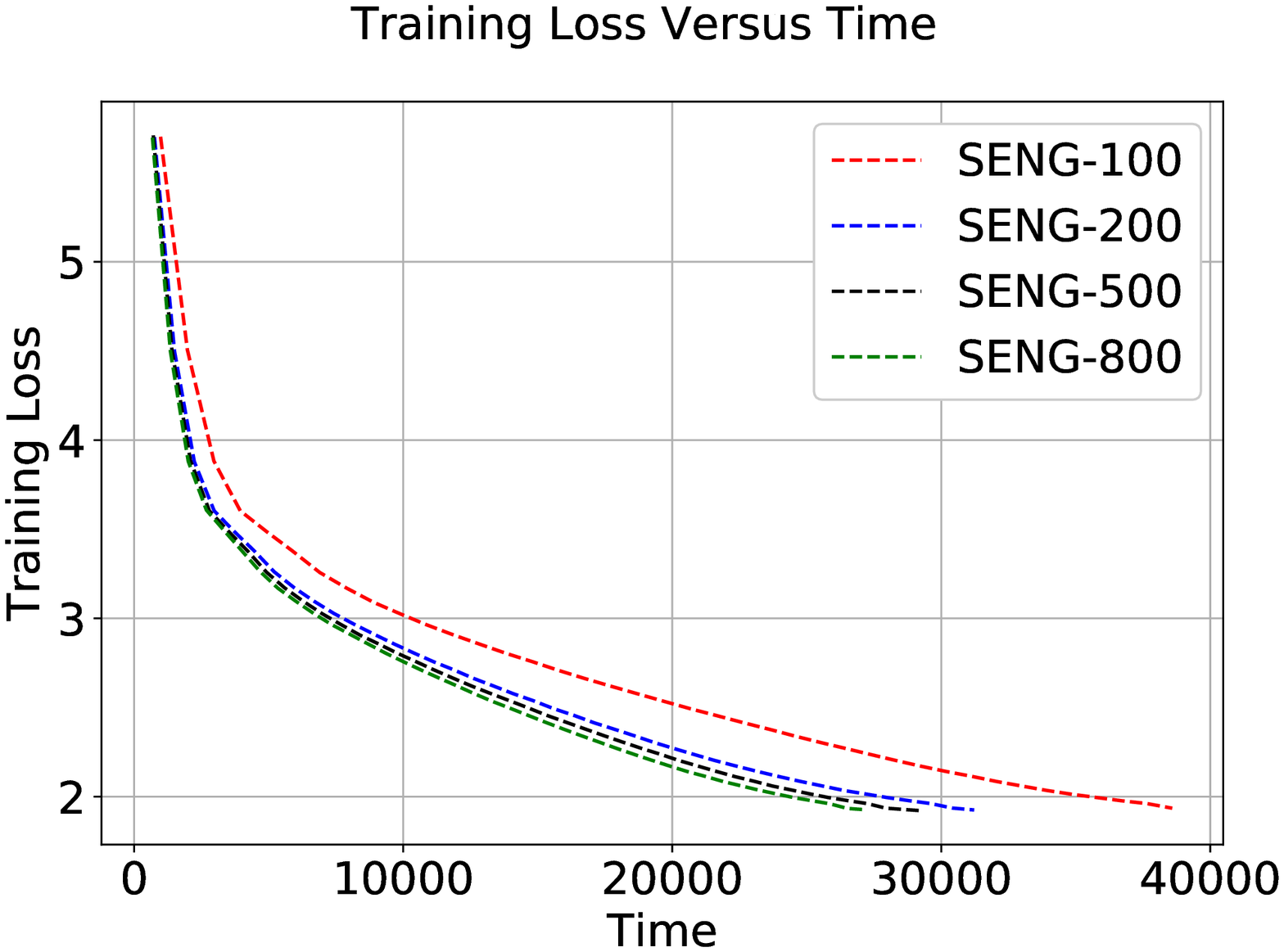}
\end{tabular}
\label{imagenet-freq-time}
\vspace{-2ex}
\end{figure}
\section{Proof of Theorem \ref{thm:globalconv}}
We next estimate the error between $b_k$ in (\ref{sol-ls-1}) and $\hat b_k$ in (\ref{sol-ls-2}) and then establish the global convergence.
\begin{lemma}
\label{diffbhatb}
Suppose that Assumptions A.1, A.2 and B.2 are satisfied with $\eta_k$ and $\epsilon_k$. It holds
\be
\|b_k-\hat b_k\|_2 \leq
%h_1^{1/2} \frac{\sqrt{\epsilon_k}+\sqrt{\frac{h_2}{h_1}}\eta_k}{1-\eta_k} \|g_k\| =
 \frac{\sqrt{\epsilon_k h_1}+\eta_k\sqrt{h_1}}{1-\eta_k}\|g_k\|_2
\ee
with probability at least $1-\delta_k$.
\end{lemma}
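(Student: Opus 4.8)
The plan is to regard $\hat b_k$ as a structured perturbation of $b_k$ in which $\Omega_k^\top\Omega_k$ replaces the identity both inside the normal matrix and on the right-hand side, and to measure that perturbation only on $\mathrm{range}(U_k)$, which is exactly where Assumptions A.1 and A.2 apply. Writing $H_k=\lambda_k I+U_k^\top U_k$ and $\hat H_k=\lambda_k I+U_k^\top\Omega_k^\top\Omega_k U_k$, from \eqref{sol-ls-1} and \eqref{sol-ls-2} we have $b_k=H_k^{-1}U_k^\top g_k$ and $\hat b_k=\hat H_k^{-1}U_k^\top\Omega_k^\top\Omega_k g_k$.

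First I would apply the resolvent identity $H_k^{-1}-\hat H_k^{-1}=\hat H_k^{-1}(\hat H_k-H_k)H_k^{-1}$ to split
\begin{equation*}
b_k-\hat b_k=\hat H_k^{-1}(\hat H_k-H_k)\,b_k+\hat H_k^{-1}\big(U_k^\top g_k-U_k^\top\Omega_k^\top\Omega_k g_k\big).
\end{equation*}
Then, writing $U_k=N_kT_k$ with $T_k=N_k^\top U_k$ and $P_k=N_k^\top\Omega_k^\top\Omega_k N_k$, both perturbations factor through $N_k$: one has $\hat H_k-H_k=T_k^\top(P_k-I)T_k$ and $U_k^\top g_k-U_k^\top\Omega_k^\top\Omega_k g_k=-T_k^\top\big(N_k^\top\Omega_k^\top\Omega_k g_k-N_k^\top g_k\big)$. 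Collecting the common factor gives
\begin{equation*}
b_k-\hat b_k=\hat H_k^{-1}T_k^\top\Big[(P_k-I)T_kb_k-\big(N_k^\top\Omega_k^\top\Omega_k g_k-N_k^\top g_k\big)\Big].
\end{equation*}

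For the bracket I would invoke the two sketching properties directly. Assumption A.1 gives $\|(P_k-I)T_kb_k\|\le\eta_k\|T_kb_k\|$, and since $\|T_kb_k\|=\|N_k^\top U_kb_k\|\le\|U_kb_k\|=\|(\lambda_kI+M_k)^{-1}M_kg_k\|\le\|g_k\|$ (the last matrix has eigenvalues in $[0,1)$), this piece is at most $\eta_k\|g_k\|$. Assumption A.2 with $v=g_k$ bounds the second piece by $\sqrt{\epsilon_k}\|g_k\|$. Hence the bracket has norm at most $(\eta_k+\sqrt{\epsilon_k})\|g_k\|$, already producing the $\eta_k+\sqrt{\epsilon_k}$ structure of the claimed bound.

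The remaining and \textbf{main obstacle} is to control the operator $\hat H_k^{-1}T_k^\top$, where both the $\tfrac{1}{1-\eta_k}$ factor and the dependence on $h_1$ must surface. The key observation is that A.1 yields $P_k\succeq(1-\eta_k)I$, so $\hat H_k=\lambda_kI+T_k^\top P_kT_k\succeq(1-\eta_k)\big(\lambda_kI+T_k^\top T_k\big)=(1-\eta_k)H_k$, whence $\|\hat H_k^{-1}T_k^\top\|\le\tfrac{1}{1-\eta_k}\|H_k^{-1}T_k^\top\|$. Using the push-through identity $H_k^{-1}T_k^\top=T_k^\top(\lambda_kI+T_kT_k^\top)^{-1}$ and Assumption B.2 (which bounds $\lambda_k$ and the spectrum of $M_k=U_kU_k^\top$ through $h_1,h_2$) reduces $\|H_k^{-1}T_k^\top\|$ to a scalar estimate of the form $\sigma/(\lambda_k+\sigma^2)$ over the singular values $\sigma$ of $U_k$. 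Combining this with the bracket bound yields the stated inequality on the event where A.1 and A.2 hold, which has probability at least $1-\delta_k$. The delicate point is purely this final constant: one must track carefully how $h_1$ enters through B.2 so as to land on the exact factor in the statement, and to make sure the single probabilistic event of probability $1-\delta_k$ is the one carrying both assumptions simultaneously.
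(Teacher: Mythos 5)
Your reduction is sound and genuinely different in organization from the paper's: you split the error via the resolvent identity into a normal-matrix perturbation acting on $b_k$ plus a right-hand-side perturbation, bound the first with A.1 (using the correct observation $\|T_kb_k\|\le\|U_kb_k\|=\|(\lambda_kI+M_k)^{-1}M_kg_k\|\le\|g_k\|$) and the second with A.2 applied to $v=g_k$; the paper instead derives an exact identity isolating $U_k^\top\Omega_k^\top\Omega_kg_k^\perp$ (so A.2 is applied to $g_k^\perp$ with $N_k^\top g_k^\perp=0$) plus a $\lambda_k$-weighted term, and works in the $(\lambda_kI+U_k^\top U_k)^{1/2}$-weighted norm. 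Both routes produce the same $(\sqrt{\epsilon_k}+\eta_k)/(1-\eta_k)$ structure. However, there is a genuine gap exactly where you flagged the "main obstacle": the inference ``$\hat H_k\succeq(1-\eta_k)H_k$, \emph{whence} $\|\hat H_k^{-1}T_k^\top\|\le\tfrac{1}{1-\eta_k}\|H_k^{-1}T_k^\top\|$'' is not valid. The Loewner order gives $\hat H_k^{-1}\preceq\tfrac{1}{1-\eta_k}H_k^{-1}$, but $A\preceq B$ does not imply $\|AC\|\le\|BC\|$ for an arbitrary factor $C$ (equivalently, $A\preceq B$ does not imply $A^2\preceq B^2$: squaring is not operator monotone), so the comparison of the two sketched/unsketched operators with $T_k^\top$ attached does not follow, and your subsequent push-through computation for $\|H_k^{-1}T_k^\top\|$ is bounding the wrong quantity.

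The gap is repairable inside your own framework without the unjustified comparison: since $T_k^\top T_k=U_k^\top U_k\preceq H_k\preceq\tfrac{1}{1-\eta_k}\hat H_k$, one has
\begin{equation*}
\|\hat H_k^{-1}T_k^\top\|^2=\|\hat H_k^{-1}T_k^\top T_k\hat H_k^{-1}\|\le\tfrac{1}{1-\eta_k}\|\hat H_k^{-1}\|\le\tfrac{1}{(1-\eta_k)^2}\|H_k^{-1}\|\le\tfrac{1}{(1-\eta_k)^2\lambda_k}\le\tfrac{1}{(1-\eta_k)^2 h_1},
\end{equation*}
where the second inequality uses Loewner monotonicity of the inverse (valid), and the last uses B.2: because $M_k=U_kU_k^\top$ is rank-deficient, $h_1I\preceq M_k+\lambda_kI$ forces $h_1\le\lambda_k$. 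Combined with your bracket bound this gives $\|b_k-\hat b_k\|_2\le\frac{\sqrt{\epsilon_k}+\eta_k}{(1-\eta_k)\sqrt{h_1}}\|g_k\|_2$, which is precisely what the paper's own proof derives; note that the constant in the lemma's \emph{statement} ($\sqrt{h_1}$ in the numerator) disagrees with the paper's proof ($1/\sqrt{h_1}$), so you should not expect to "land on the exact factor in the statement"—the repaired version of your argument matches the proof, and the statement appears to carry a typo.
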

%The proof of Lemma \ref{diffbhatb} is shown in Appendix.

%\section{Proof of Lemma \ref{diffbhatb}}
\begin{proof}

%Define $\hat b:= (\lambda I + U^\top \Omega^\top \Omega U)^{-1} U^\top \Omega^\top \Omega g$ and  $ b:= (\lambda I + U^\top U)^{-1} U^\top g.$
%Since the direction $d_k^l$ for all layers are identical, we first focus on one layer and drop the layer indices, then give the deduction for $d_k=[d_k^1,\dots,d_k^L].$

The SVD decomposition of $U_k$ is: $U_k=N_k\Sigma_k V_k,$ where $N_k\in \Rn^{n\times\rho_k}, \Sigma_k \in \Rn^{\rho_k\times \rho_k}, V_k\in \Rn^{\rho_k\times \varrho}$ and $\rho_k$ is the rank of $U_k$. Let $g_k^{\perp} = g_k - U_kU_k^{\dagger}g_k =g_k- N_kN_k^\top g_k$, where $U_k^\dagger$ is the pseudoinverse of $U_k$. By the definition in (\ref{sol-ls-1}) and (\ref{sol-ls-2}),
\[b_k = (\lambda_k I + U_k^\top U_k)^{-1}(U_k^{\top}g_k)\; \text{and}\; \hat b_k = (\lambda_k I + U_k^\top \Omega_k^\top \Omega_k U_k )^{-1}(U_k^\top\Omega_k^\top \Omega_kg_k),\]
we have:
\be
\begin{aligned}
&(\lambda_k I + U_k^\top \Omega_k^\top \Omega_k U_k)(\hat b_k - b_k)\\
%=&    U_k^\top \Omega_k^\top \Omega_k g - (\lambda_k I + U_k^\top \Omega_k^\top \Omega_k U_k)(\lambda_k I + U_k^\top U_k)^{-1} U_k^\top g\\
=&   U_k^\top \Omega_k^\top \Omega_k g_k^{\perp}+ U_k^\top \Omega_k^\top \Omega_k U_kU_k^{\dagger}g_k - (\lambda_k I + U_k^\top \Omega_k^\top \Omega_k U_k) (\lambda_k I + U_k^\top U_k)^{-1} U_k^\top g_k  \\
%=& U_k^\top \Omega_k^\top \Omega_k g^{\perp} - \lambda_k U_k^{\dagger}g + (\lambda_k I + U_k^\top \Omega_k^\top \Omega_k U_k)[ U_k^{\dagger}g-(\lambda_k I + U_k^\top U_k)^{-1} U_k^\top g ]\\
=& U_k^\top \Omega_k^\top \Omega_k g_k^{\perp} - \lambda_k U_k^{\dagger}g_k + (\lambda_k I + U_k^\top \Omega_k^\top \Omega_k U_k)(\lambda_k I + U_k^\top U_k)^{-1} [(\lambda_k I + U_k^\top U_k) U_k^{\dagger}g_k-U_k^\top g_k ]\\
%=& U_k^\top \Omega_k^\top \Omega_k g^{\perp} - \lambda_k U_k^{\dagger}g + (\lambda_k I + U_k^\top \Omega_k^\top \Omega_k U_k)(\lambda_k I + U_k^\top U_k)^{-1} \lambda_k I U_k^{\dagger}g\\
=&  U_k^\top \Omega_k^\top \Omega_k g_k^{\perp}   + \lambda_k (U_k^\top \Omega_k^\top \Omega_k U_k- U_k^\top U_k)(\lambda_k I + U_k^\top U_k)^{-1}U_k^{\dagger}g_k. \end{aligned}
\ee
The last equality follows from the fact that $U_k^\top U_k U_k^\dagger g_k = U_k^\top g_k$.
By Assumption B.2, we know $U_k^\top U_k +\lambda_k I$ is positive definite. We define:
\begin{align}
\label{diffestimate}
(U_k^\top U_k +\lambda_k I)^{-1/2}(\lambda_k I + U_k^\top \Omega_k^\top \Omega_k U_k)(\hat b_k - b_k):=\Pi_k + \Delta_k,
\end{align}

where
\begin{equation*}
\begin{aligned}
\Pi_k =&  (U_k^\top U_k +\lambda_k I)^{-1/2}U_k^\top \Omega_k^\top \Omega_k g_k^{\perp} = V_k^\top(\Sigma_k^2+\lambda_k I)^{-1/2}\Sigma_k N_k^\top  \Omega_k^\top \Omega_k g_k^{\perp}\\
\Delta_k = & \lambda_k (U_k^\top U_k +\lambda_k I)^{-1/2} (U_k^\top \Omega_k^\top \Omega_k U_k- U_k^\top U_k)(\lambda_k I + U_k^\top U_k)^{-1}U_k^{\dagger}g_k\\
= & \lambda_k V_k^\top \Sigma_k(\Sigma_k^2+\lambda_k I)^{-1/2}(N_k^\top\Omega_k^\top\Omega_k N_k - I)(\Sigma_k^2+\lambda_k I)^{-1}N_k^{\top}g_k.
\end{aligned}
\end{equation*}
By Assumption A.1, it holds with probability $1-\delta_k$:
\[
(1-\eta_k)I\preceq N_k^\top\Omega_k^\top\Omega_k N_k \preceq (1+\eta_k) I.
\]
By a left multiplication $V_k^\top\Sigma_k^\top$ , right multiplication $\Sigma_k V_k$ to each matrix and using the fact $N_k^\top N_k = I$, we have
\[
(1-\eta_k) U_k^\top U_k  \preceq U_k^\top\Omega_k^\top\Omega_kU_k \preceq (1+\eta_k) U_k^\top U_k.
\]
This implies
\[
(1-\eta_k) (\lambda_k I + U_k^\top  U_k) \preceq (\lambda_k I + U_k^\top \Omega_k^\top \Omega_k U_k)\preceq (1+\eta_k) (\lambda_k I + U_k^\top   U_k).
\]
Hence, we have
\[
(1-\eta_k)I \preceq (U_k^\top U_k +\lambda_k I)^{-1/2}(\lambda_k I + U_k^\top \Omega_k^\top \Omega_k U_k) (U_k^\top U_k +\lambda_k I)^{-1/2} \preceq (1+\eta_k)I,
\]
which yields
\begin{equation*}
\begin{aligned}
&\|(U_k^\top U_k +\lambda_k I)^{1/2}(\hat b_k - b_k)\|_2\\
 \leq &  \|[(U_k^\top U_k +\lambda_k I)^{-1/2}(\lambda_k I +U_k^\top \Omega_k^\top \Omega_k U_k) (U_k^\top U_k +\lambda_k I)^{-1/2}]^{-1}\|_2 \|\Pi_k+\Delta_k\|_2 \\
 \leq& \frac{1}{1-\eta_k}  \|\Pi_k+\Delta_k\|_2 \leq \frac{1}{1-\eta_k} ( \|\Pi_k\|_2+\|\Delta_k\|_2).
\end{aligned}
\end{equation*}
By using $N_k^\top g_k^{\perp} = 0$ and Assumption A.2, we have with probability $1-\delta_k$:
\be
\label{Piestimate}
\begin{aligned}
\|\Pi_k\|_2 &\leq  \| (\Sigma_k^2+\lambda_k I)^{-1/2}\Sigma_k\|_2 \| N_k^\top\Omega_k^\top \Omega_k g_k^{\perp} - N_k^\top g_k^{\perp}\|_2\\
&\leq \sqrt{\epsilon_k }  \|(\Sigma_k^2+\lambda_k I)^{-1/2}\Sigma_k\|_2  \|g_k^{\perp}\|_2\\
&\leq  \sqrt{\epsilon_k}\|g_k^{\perp}\|_2 \leq \sqrt{\epsilon_k} \|g_k\|_2.
\end{aligned}
\ee
By using Assumptions A.1 and B.2, we have with probability $1-\delta_k$:
\be
\label{deltaestimate}
\begin{aligned}
\|\Delta_k\|_2 &\leq  \lambda_k \| \Sigma_k(\Sigma_k^2+\lambda_k I)^{-1/2}(N_k^\top\Omega_k^\top\Omega_k N_k - I)(\Sigma_k^2+\lambda_k I)^{-1}N_k^{\top}g_k\|_2\\
& \leq \lambda_k \eta_k \|\Sigma_k(\Sigma_k^2+\lambda_k I)^{-1/2}\|_2\|(\Sigma_k^2+\lambda_k I)^{-1}\|_2\|N_k^{\top}g_k\|_2\\
& \leq \eta_k \|\Sigma_k(\Sigma_k^2+\lambda_k I)^{-1/2}\|_2\|N_k^{\top}g_k\|_2\\
&\leq \eta_k  \|g_k\|_2.
\end{aligned}\ee
By Assumption B.2 and combining (\ref{diffestimate}), (\ref{deltaestimate}) and (\ref{Piestimate}), we have
\begin{align}
\|\hat b_k - b_k\|_2 \leq \frac{1}{\sqrt{h_1}} \frac{\sqrt{\epsilon_k}+\eta_k}{1-\eta_k} \|g_k\|_2
\end{align}
%Adding the layer indices, the above deductions actually yield
%\[
%\|\hat b_k^l - b_k^l\|_2 \leq \frac{\sqrt{\epsilon^l_k h_1}+\eta^l_k\sqrt{h_1}}{1-\eta^l_k} \|g^l_k\|_2
%\]
%with probability $1-\delta_k^l.$ Next, we consider the case for all layers. Let $\eta_k^l \equiv \eta_k$ and $\epsilon_k^l \equiv \epsilon_k$, we have
%\[
%\sum_{l=1}^L \|\hat b_k^l - b_k^l\|^2_2 \leq (\frac{\sqrt{\epsilon_k h_1}+\eta_k\sqrt{h_1}}{1-\eta_k})^2 \sum_{l=1}^L\|g^l_k\|^2_2
%\]
%with probability $1-\delta_k :=\Pi_{l=1}^L(1- \delta_k^l)$.
%This is equivalent to
%\[
%\|\hat b_k - b_k\|_2 \leq \frac{\sqrt{\epsilon_k h_1}+\eta_k\sqrt{h_1}}{1-\eta_k} \|g_k\|_2
%\]
with probability $1-\delta_k$ and this completes the proof of Lemma \ref{diffbhatb}.
\end{proof}

%With Lemma \ref{diffbhatb}, it is enough to give the proof of Theorem \ref{thm:globalconv}.
We now give the proof of Theorem  \ref{thm:globalconv} by using Lemma \ref{diffbhatb}.
\begin{proof}
For simplicity, we use $\Expe_k$ to denote the conditional expectation, i.e.,  $\Expe_{k+1} [·] = \Expe [·	| \theta_{k},\dots,\theta_0,\mathbf{\Omega} ]$.
By the definitions (\ref{inversion}) and (\ref{sketch-direc}), we have $\hat d_k- d_k = \frac{1}{\lambda_k}U_k( \hat b_k - b_k)$. Since
$\|U_k\|_2^2 \leq \|\lambda_k I + U_kU_k^\top\|_2 \leq h_2 $,
we obtain $\|U_k\|_2\leq \sqrt{h_2}$.
It follows from Lemma \ref{diffbhatb} that \be
\label{diff-esti}
\Expe_{k} [\|d_k - \hat d_k\|_2] \leq \sqrt{\frac{h_2}{h_1}} \frac{\sqrt{\epsilon_k}+\eta_k}{\lambda_k(1-\eta_k)} \Expe_k[\|g_k\|_2] =
% h_2 \frac{\sqrt{\epsilon_k }+\eta_k}{1-\eta_k} \|g_k\|_2 =
\sqrt{\frac{h_2}{h_1}}  t_k \Expe_k[\|g_k\|_2],
\ee
where $t_k = \frac{\sqrt{\epsilon_k }+\eta_k}{\lambda_k(1-\eta_k)} $ and can be small enough by carefully choosing $\epsilon_k$ and $\eta_k$.
%It follows from Assumptions A.1-A.2 that
%\be\label{dir-esti-1}
%-h_1^{-1}\|g_k\|_2^2\leq \left<g_k,d_k\right> = \left< g_k, -(B_k+\lambda_k I)^{-1}g_k \right > \leq -h_2^{-1}\|g_k\|_2^2\ee
%and
%\be
%\label{dir-esti-2}
%\|d_k\|_2^2 = \|(B_k+\lambda_k I)^{-1}g_k\|_2^2 \leq h_1^{-2}\|g_k\|_2^2.
%\ee

Denote $\tilde{d}_k = -(B_k+\lambda_k)^{-1}\nabla \Psi(\theta_k)$. By combining (\ref{diff-esti}), Assumptions A.1-A.2 and B.1-B.3, and taking the conditional expectation yields
%it holds with probability $(1-\delta_k)^2$:
\be
\label{descent:lemma}
\begin{aligned}
\Expe_{k} [\Psi(\theta_{k+1})] \leq & \Expe_{k}[ \Psi(\theta_k) + \left< \nabla \Psi(\theta_k),\theta_{k+1}-\theta_k\right> + \frac{L_{\Psi}}{2}\|\theta_{k+1}-\theta_k\|_2^2]\\
\leq &\Expe_{k} \left [\Psi(\theta_k) + \left<   \nabla \Psi(\theta_k),\alpha_k( \hat d_k - d_k + d_k - \tilde d_k + \tilde d_k)\right>  + {L_{\Psi}\alpha_k^2}\left [\|d_k\|_2^2 + \|\hat d_k - d_k\|_2^2\right] \right]\\
%&\leq \Psi(\theta_k) -(\alpha_k h_2^{-1}- {L_{\Psi}\alpha_k^2}h_1^{-2}) \|g_k\|_2^2 + \alpha_k \|g_k\|_2\|U_k( \hat b_k - b_k)\|_2 + {L_{\Psi}\alpha_k^2}\|U_k( \hat b_k - b_k)\|_2^2\\
\leq  &\Expe_{k} \left [ \Psi(\theta_k)  -\alpha_k h_2^{-1} \|\nabla \Psi(\theta_k)\|_2^2 + {L_{\Psi}\alpha_k^2}h_1^{-2}\|g_k\|^2_2 + \alpha_k \sqrt{\frac{h_2}{h_1}}t_k \|\nabla \Psi(\theta_k)\|_2 \|g_k\|_2 + {L_{\Psi}\alpha_k^2}\frac{h_2}{h_1}t_k^2\|g_k\|_2^2 \right]\\
\leq  & \Expe_{k} \left [\Psi(\theta_k)  -(\alpha_k h_2^{-1} - \frac{{\alpha_k}h_2^{-1}}{2}) \|\nabla \Psi(\theta_k)\|_2^2   + \alpha_k \frac{h_2^2}{2h_1} t_k^2 \|g_k\|_2^2  + \frac{L_{\Psi}\alpha_k^2h_1^{-2}}{2}\|g_k\|_2^2 + {L_{\Psi}\alpha_k^2}\frac{h_2}{h_1}t_k^2\|g_k\|_2^2 \right]\\
 \leq & \Psi(\theta_k) - \alpha_k \left( \frac{1}{2}h_2^{-1}- \frac{L_{\Psi}\alpha_kh_1^{-2}}{2} -\frac{h_2^2}{2h_1} t_k^2 - {L_{\Psi}\alpha_k}\frac{h_2}{h_1}t_k^2\right)\|\nabla \Psi(\theta_k)\|^2_2 \\
& +\alpha_k \left( \frac{L_{\Psi}\alpha_kh_1^{-2}}{2} +\frac{h_2^2}{2h_1} t_k^2 + {L_{\Psi}\alpha_k} \frac{h_2}{h_1}t_k^2\right)\sigma_k^2.
%\\
%&:=\Psi(\theta_k) - \alpha_kc_1 \|g_k\|^2.
\end{aligned}
\ee
%where in the last inequality, we use with at least $1-\delta_k$, $\|g_k - \nabla \Psi(\theta_k)\|^2 \leq \sigma_k^2$ holds.
If $\alpha_k \leq \min\{\frac{1}{2L_{\Psi}},\frac{h_1^2}{2L_\Psi h_2}\}$, we have
\[\frac{1}{h_2}- L_{\Psi}\alpha_k\frac{1}{h_1^2}  > \frac{1}{2h_2}.\]
If $t_k^2 <\frac{h_1}{4h_2(h_2^2-h_1^2)}$, we obtain
\be
\label{parameter-estimate}
\left( \frac{1}{2}h_2^{-1}- \frac{L_{\Psi}\alpha_kh_1^{-2}}{2} -\frac{h_2^2}{2h_1} t_k^2 - {L_{\Psi}\alpha_k}\frac{h_2}{h_1}t_k^2\right) > \frac{1}{4h_2}- (\frac{h_2^2}{2h_1} t_k^2 + {L_{\Psi}\alpha_k}\frac{h_2}{h_1}) t_k^2 >  \frac{1}{8h_2}.
\ee
Let $\epsilon_k$ and $\eta_k$ be small enough so that $t_k$ is small. This can be achieved by choosing suitable sample sizes. Combining (\ref{parameter-estimate}), summing the inequality (\ref{descent:lemma}) and taking expectation, we get
\be
\label{descent-fobj}
\sum_{k=0}^{\infty}   \frac{\alpha_k}{8h_2} \Expe\left[\|\nabla \Psi(\theta_k)\|_2^2|\mathbf{\Omega}\right] \leq  \Psi(\theta_0) - \Psi^*  + \sum_{k=0}^\infty \left( \frac{L_{\Psi}\alpha_kh_1^{-2}}{2} +\frac{h_2^2}{2h_1} t_k^2 + {L_{\Psi}\alpha_k} \frac{h_2}{h_1}t_k^2\right)\alpha_k \sigma_k^2
<\infty.
\ee
%This completes the proof of Theorem \ref{thm:globalconv}.
% $h_3$ is a function of $\epsilon$ and $\eta$, therefore it follows that $c_1= c_1(\epsilon,\eta)$. Since $\delta$ do not depend on $\eta,\epsilon$, we can let $\eta,\epsilon$ be small enough to let $c_1$ be positive.

Define the events:
$$\Gamma_k =\left \{\|N_k^\top \Omega_k^\top \Omega_k N_k- I\|_2 \leq \eta_k,
\|N_k^\top \Omega_k^\top \Omega_k v - N_k^\top v\|^2_2 \leq \epsilon_k \|v\|_2^2
\right\},$$ where $N_k$ is an orthogonal basis for the column span of $U_k$.  From Assumptions A.1 and A.2, it is easy to know $\Prob(\Gamma_k)=1-\delta_k$
and $\Prob( \Gamma) = \Pi_{i=0}^\infty(1-\delta_k)$ where $\Gamma = \cap_{k=0}^\infty \Gamma_k$. It follows from \eqref{descent-fobj} that on the event $\Gamma$, we have \be \label{step-var-summable2}\sum_{k=0}^\infty \alpha_k \Expe[\|\nabla \Psi(\theta_k)\|_2^2]< \infty. \ee
This implies that there exists an event $\widetilde{\mathbf{\Omega}}$ such that $ \Prob(\widetilde{\mathbf{\Omega}}) = \Pi_{i=1}^\infty(1-\delta_k)$ and on the event we have
\be \label{step-var-summable}\sum_{k=0}^\infty \alpha_k \|\nabla \Psi(\theta_k)\|_2^2< \infty.\ee

The deductions in the next are on the events $ \widetilde {\mathbf{\Omega}}$. %with probability $\Pi_{k=1}^\infty (1-\delta_k)$.
Since $\sum_{k=0}^\infty \alpha_k = \infty$, there is a subsequence $\{o_i\}_i$ such that $\{\|\nabla \Psi(\theta_{o_i})\|^2_2 \rightarrow 0\}$, which is equivalent to
\be
\label{inf-conv} \lim_{k\rightarrow\infty} \inf \|\nabla \Psi(\theta_k)\|_2 = 0.\ee
%with probability $\Pi_{k=1}^\infty (1-\delta_k)$.
By Assumptions B.1-B.3, (\ref{diff-esti}) and (\ref{step-var-summable}), we obtain
\be
\label{theta-summable}
\begin{aligned}
\sum_{k=0}^\infty \alpha_k^{-1}\|\theta_{k+1}-\theta_k\|_2^2 = \sum_{k=0}^\infty \alpha_k\|d_k - d_k + \hat d_k\|_2^2\leq \sum_{k=0}^\infty \alpha_k (h_1^{-2} + \frac{h_2}{h_1}t_k^2) \left (\|\nabla \Psi(\theta_k)\|_2^2 + \sigma_k^2 \right) <\infty.\\
\end{aligned}
\ee
%with probability $\Pi_{k=1}^\infty (1-\delta_k)$.
The last inequality follows from the fact that $t_k$ has an upper bound. Next, we prove the result by contradiction. Assume that there exists $\epsilon>0$ and two increasing sequences $\{p_i\}_i $, $\{q_i\}_i$ such that $p_i < q_i$ and
\[
    \|\nabla \Psi(\theta_{p_i})\|_2 \geq 2\epsilon, \quad \|\nabla \Psi(\theta_{q_i})\|_2 < \epsilon, \quad \|\nabla \Psi(\theta_{k})\|_2 \geq \epsilon,
\]
for $ k = p_i+1,\dots,q_i-1.$
%with probability $\Pi_{k=1}^\infty (1-\delta_k)$.
Thus, it follows that
\be
\begin{aligned}
\epsilon^2 \sum_{i=0}^{\infty} \sum_{k=p_i}^{q_i-1} \alpha_k
& \leq \sum_{i=0}^{\infty} \sum_{k=p_i}^{q_i-1} \alpha_k \|\nabla \Psi(\theta_k)\|_2^2
\leq \sum_{k=0}^{\infty}\alpha_k \|\nabla \Psi(\theta_k)\|_2^2< \infty .
\end{aligned}
\ee
Setting $\zeta_i = \sum_{k=p_i}^{q_i-1} \alpha_k$ implies $\zeta_i\rightarrow 0.$ Then by the H\"older's inequality and (\ref{theta-summable}), we obtain
\[
    \|\theta_{p_i} -\theta_{q_i} \|_2 \leq \sqrt{\zeta_i} [\sum_{k=p_i}^{q_i-1} \alpha_k^{-1}\|\theta_{k+1} - \theta_k\|_2^2]^{1/2} \rightarrow 0.
\]
Due to the Lipschitz property of $\nabla \Psi$, we have $\lim_{i\rightarrow \infty} \|\nabla \Psi(\theta_{p_i}) -\nabla \Psi(\theta_{q_i}) \|_2\rightarrow 0$
%with probability $\Pi_{k=1}^\infty (1-\delta_k)$
, which is a contradiction. This implies  $\lim_{k\rightarrow \infty }\|\nabla \Psi(\theta_k) \|_2 = 0$ with probability $\Pi_{k=0}^\infty (1-\delta_k)$ and completes the proof.
\end{proof}

\section{Proof of Theorem \ref{ntk-theorem}}
%Let us first list two conditions and prove these main results under the two conditions. 
%Note that under the standard assumptions in the NTK case the two conditions hold, which is proved in Lemma 6-8 in \cite{zhang2019fast}.

We assume that the following two conditions hold:
\begin{itemize}
\item Condition 1. The matrix $G_0$ is positive definite, where $G_0 = J_0J_0^\top$ and $J_0\in \Rn^{N \times n}$ is the Jacobian matrix. 
%and $n>N.$
\item Condition 2.
The exists constant $0\leq C < \frac{1}{2}$ such that for any $\|\theta - \theta_0\|_2 \leq \frac{3\|y-f^0\|_2}{\sqrt{\lambda_{\text{min}}(G_0)}}$, we have
\[
\|J(\theta) - J_0\| \leq \frac{C}{3}\sqrt{\lambda_{\text{min}}(G_0)}.
\]
\end{itemize}

%\begin{remark}
%\label{condition2-rmk}
As is proved in Lemmas 6-8 in \cite{zhang2019fast}, the above conditions hold with probability $1-\delta$ if $\hat m$ is set to be $ \Omega\left( \frac{n^4}{v^2\lambda_0^4\delta^3}\right)$. Define the corresponding event by $\Gamma$ such that $\Prob(\Gamma) = 1-\delta.$ Note that the sketching matrices are independent from each other and also from the weight initialization. Define the events:
$\Gamma_k =\left \{\|N_k^\top \Omega_k^\top \Omega_k N_k- I\|_2 \leq \eta_k,
\|N_k^\top \Omega_k^\top \Omega_k v - N_k^\top v\|^2_2 \leq \epsilon_k \|v\|_2^2
\right.\}$, such that $\Prob(\Gamma_k) = 1- \delta_k$, where $N_k$ is an orthogonal basis for the column span of $J_k^\top$. Let $\widetilde{\Omega} =\Gamma \cap \left ( \cap_{k=1}^\infty \Gamma_k\right)$ and our analysis is mainly on the event $\widetilde \Omega$.
%\end{remark}

\begin{lemma}
\label{psd-jacobian}
If Condition 2 holds and $\|\theta - \theta_0\|_2 \leq \frac{3\|y-f^0\|_2}{\sqrt{\lambda_{\text{min}}(G_0)}}$, we have 
 $\lambda_\mathrm{min}(G(\theta)) \geq \frac{4}{9} \lambda_\mathrm{min}(G_0)$, where $G(\theta) = J(\theta)J(\theta)^\top.$
\end{lemma}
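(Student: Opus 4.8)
The plan is to prove the lemma by a direct perturbation argument on the Jacobian, using the variational characterization of the smallest eigenvalue rather than any matrix identity. First I would record that, since $G(\theta) = J(\theta)J(\theta)^\top$ is symmetric positive semidefinite, its smallest eigenvalue satisfies $\lambda_{\min}(G(\theta)) = \min_{\|u\|_2 = 1} u^\top G(\theta) u = \min_{\|u\|_2=1}\|J(\theta)^\top u\|_2^2$, and similarly $u^\top G_0 u = \|J_0^\top u\|_2^2 \geq \lambda_{\min}(G_0)$ for every unit vector $u$ (Condition 1 guarantees this quantity is strictly positive, so that the radius in the hypothesis is well defined). Thus it suffices to produce a uniform lower bound on $\|J(\theta)^\top u\|_2$ over all unit $u$.

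Second, for a fixed unit vector $u$, I would split $J(\theta)^\top u = J_0^\top u + (J(\theta) - J_0)^\top u$ and apply the triangle inequality in the form
\[
\|J(\theta)^\top u\|_2 \geq \|J_0^\top u\|_2 - \|(J(\theta) - J_0)^\top u\|_2 \geq \sqrt{\lambda_{\min}(G_0)} - \|J(\theta) - J_0\|_2,
\]
where the first term uses $\|J_0^\top u\|_2 = \sqrt{u^\top G_0 u} \geq \sqrt{\lambda_{\min}(G_0)}$ and the second uses submultiplicativity of the operator norm together with $\|u\|_2 = 1$. The hypothesis $\|\theta - \theta_0\|_2 \leq \tfrac{3\|y - f^0\|_2}{\sqrt{\lambda_{\min}(G_0)}}$ is exactly the ball on which Condition 2 applies, so I may substitute $\|J(\theta) - J_0\|_2 \leq \tfrac{C}{3}\sqrt{\lambda_{\min}(G_0)}$.

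Third, I would collect the constants: the estimate becomes $\|J(\theta)^\top u\|_2 \geq \bigl(1 - \tfrac{C}{3}\bigr)\sqrt{\lambda_{\min}(G_0)}$, and since $0 \leq C < \tfrac{1}{2} < 1$ we have $1 - \tfrac{C}{3} > \tfrac{2}{3}$, hence $\|J(\theta)^\top u\|_2 > \tfrac{2}{3}\sqrt{\lambda_{\min}(G_0)}$ uniformly in $u$. Squaring and taking the minimum over unit $u$ then yields $\lambda_{\min}(G(\theta)) \geq \tfrac{4}{9}\lambda_{\min}(G_0)$, as claimed.

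The argument is short, and the only point requiring care is the bookkeeping at the interface with Condition 2: verifying that the lemma's hypothesis places $\theta$ precisely inside the ball on which the perturbation bound $\tfrac{C}{3}\sqrt{\lambda_{\min}(G_0)}$ is guaranteed, and tracking that $C < \tfrac12$ (rather than the weaker $C < 1$ that the final factor $\tfrac{2}{3}$ would literally require) leaves room to spare. I would deliberately avoid expanding $G(\theta) = G_0 + J_0 E^\top + E J_0^\top + E E^\top$ with $E = J(\theta) - J_0$, since the cross terms would bring in $\|J_0\|_2$, which is not controlled by $\sqrt{\lambda_{\min}(G_0)}$; the per-vector estimate above sidesteps this cleanly.
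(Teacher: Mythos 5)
Your proof is correct and follows essentially the same route as the paper: the paper's one-line argument invokes the singular-value perturbation bound $\sigma_{\min}(J(\theta)) \geq \sigma_{\min}(J_0) - \|J(\theta)-J_0\|$ together with Condition 2, and your variational argument with the triangle inequality is precisely the standard derivation of that bound. The constant bookkeeping ($1 - C/3 > 2/3$ since $C < 1/2$) matches the paper's estimate as well.
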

\begin{proof}
By Condition 2, if $\|\theta - \theta_0\|_2 \leq \frac{3\|y-f^0\|_2}{\sqrt{\lambda_{\text{min}}(G_0)}}$, we have :
\begin{align}
\sigma_{\text{min}}(J(\theta)) & \geq \sigma_{\text{min}}(J_0) - \|J(\theta) - J_0\| \geq \frac{2}{3}\sqrt{\lambda_{\text{min}}(G_0)}, \label{sigma-estimate-lemma}
\end{align}
which means $\lambda_{\text{min}}(G(\theta)) \geq \frac{4}{9}{\lambda_{\text{min}}(G_0)} .$
\end{proof}

\begin{lemma}
\label{main-lemma-ntk}
Assume that  $\|\theta_{k+1} - \theta_0\|_2 \leq \frac{3\|y-f^0\|_2}{\sqrt{\lambda_{\text{min}}(G_0)}}$ and  $\|\theta_k - \theta_0\|_2 \leq \frac{3\|y-f^0\|_2}{\sqrt{\lambda_{\text{min}}(G_0)}}$. Under Conditions 1 and 2, there exists a constant $\zeta\in (0,1)$, such that 
\be \label{func-value-descent} \|f^{k+1} - y\|_2^2 \leq \zeta \|f^k - y\|_2^2.\ee
\end{lemma}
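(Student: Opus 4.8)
The plan is to track the residual $r_k := f^k - y$ and show it contracts by a constant factor. First I would expand $f^{k+1}-f^k$ by the fundamental theorem of calculus along the segment $\theta_s = \theta_k + s(\theta_{k+1}-\theta_k)$, $s\in[0,1]$. Since $\theta_{k+1}-\theta_k = -\alpha d_k = -\alpha J_k^\top P_k r_k$ with $P_k := (\lambda_k I + \widetilde M_k)^{-1}$, this gives $r_{k+1} = r_k - \alpha\int_0^1 J(\theta_s) J_k^\top P_k r_k\,ds$. Writing $J(\theta_s) = J_k + (J(\theta_s)-J_k)$ and using that the first integrand is independent of $s$, I obtain the decomposition
\[
r_{k+1} = \underbrace{(I - \alpha G_k P_k)r_k}_{\text{main term}} - \alpha\underbrace{\int_0^1 (J(\theta_s)-J_k)J_k^\top P_k r_k\,ds}_{\text{drift term}},
\]
where $G_k = J_k J_k^\top$. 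The whole argument then reduces to showing the main term contracts and the drift term is a small multiple of $\|r_k\|_2$.

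For the main term, the key point is that $G_k P_k$ is close to the identity. Using the compact SVD $J_k^\top = N_k\Sigma_k V_k$ as in the proof of Lemma~\ref{diffbhatb} and writing $E_k = N_k^\top\Omega_k^\top\Omega_k N_k$, I have $G_k = V_k^\top\Sigma_k^2 V_k$ and $\widetilde M_k = V_k^\top\Sigma_k E_k\Sigma_k V_k$. Assumption~A.1 gives $\|E_k - I\|_2\le\eta_k$, hence $(1-\eta_k)G_k \preceq \widetilde M_k \preceq (1+\eta_k)G_k$. Passing to the orthogonal basis $V_k$, $I - G_k P_k$ becomes $I - \Sigma_k^2(\lambda_k I + \Sigma_k E_k\Sigma_k)^{-1}$, which by the resolvent identity equals $(\lambda_k I + \Sigma_k(E_k-I)\Sigma_k)(\lambda_k I + \Sigma_k E_k\Sigma_k)^{-1}$. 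This yields
\[
\|I - G_k P_k\|_2 \le \frac{\lambda_k + \eta_k\lambda_{\max}(G_k)}{\lambda_k + (1-\eta_k)\lambda_{\min}(G_k)} =: \rho_k,
\]
which tends to $0$ as $\eta_k,\lambda_k\to 0$. Splitting $I - \alpha G_k P_k = (1-\alpha)I + \alpha(I - G_k P_k)$ then bounds the main term by $(1-\alpha+\alpha\rho_k)\|r_k\|_2$ for $\alpha\in(0,1]$.

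For the drift term, since both $\theta_k$ and $\theta_{k+1}$ lie in the ball of Condition~2 and $\theta_s$ is on the segment between them, Condition~2 gives $\|J(\theta_s)-J_k\|_2 \le \tfrac{2C}{3}\sqrt{\lambda_{\min}(G_0)}$. I would bound $\|J_k^\top P_k\|_2$ via $\|J_k^\top P_k\|_2^2 = \|P_k G_k P_k\|_2$ together with $G_k \preceq (1-\eta_k)^{-1}\widetilde M_k \preceq (1-\eta_k)^{-1}(\lambda_k I + \widetilde M_k)$, which gives $P_k G_k P_k \preceq (1-\eta_k)^{-1}P_k$ and hence $\|J_k^\top P_k\|_2 \le \big((1-\eta_k)\sqrt{\lambda_{\min}(G_k)}\big)^{-1}$. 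Combining these with Lemma~\ref{psd-jacobian} (so $\sqrt{\lambda_{\min}(G_k)}\ge\tfrac{2}{3}\sqrt{\lambda_{\min}(G_0)}$) bounds the drift term by $\tfrac{\alpha C}{1-\eta_k}\|r_k\|_2$.

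Putting the two estimates together yields $\|r_{k+1}\|_2 \le \big(1 - \alpha(1 - \rho_k - \tfrac{C}{1-\eta_k})\big)\|r_k\|_2$. Because $C<\tfrac12$ by Condition~2, choosing $\eta_k,\epsilon_k,\lambda_k$ uniformly small makes $\rho_k + \tfrac{C}{1-\eta_k} < 1$, so the bracketed factor lies in $(0,1)$ for any $\alpha\in(0,1]$; setting $\zeta$ to the square of a uniform upper bound on this factor gives \eqref{func-value-descent}. I expect the main obstacle to be the main-term estimate: because $\widetilde M_k$ is a sketched surrogate of $G_k$, the product $G_k P_k$ is \emph{not} symmetric, so operator-norm control must be obtained indirectly by diagonalizing in the right singular basis of $J_k^\top$ and invoking A.1 there, rather than by a naive spectral bound. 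The remaining bookkeeping is to verify that the drift constant stays below $1$ uniformly in $k$, so that a single $\zeta$ works for every iteration.
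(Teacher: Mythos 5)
Your proof is correct, and it takes a genuinely different route from the paper's. The paper compares the actual step against the \emph{exact} NGD step: it decomposes $f^{k+1}-f^k$ into the exact NGD term (which equals $-\alpha(f^k-y)$ identically), plus three separately bounded errors --- the Jacobian drift along the segment, the sketching error through $\|(J_kJ_k^\top)^{-1}-(J_k\Omega_k^\top\Omega_kJ_k^\top)^{-1}\|_2$, and the damping error $\|d_k-\overline d_k\|_2$ with $\overline d_k = J_k^\top(J_k\Omega_k^\top\Omega_kJ_k^\top)^{-1}(f^k-y)$ --- and then expands $\|f^{k+1}-y\|_2^2$ including the cross term $2\langle f^k-y, f^{k+1}-f^k\rangle$, which is what forces its step-size restriction $\alpha\le\frac{1-2\widetilde C}{(1+\widetilde C)^2}$. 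You instead keep sketching and damping fused inside the single operator $G_kP_k$ with $P_k=(\lambda_k I+\widetilde M_k)^{-1}$, and control $\|I-G_kP_k\|_2$ in one shot: conjugating by the right singular factor $V_k$ (square and orthogonal here, since $G_k\succ 0$ makes $J_k$ full row rank) and applying the resolvent identity, with A.1 supplying $(1-\eta_k)G_k\preceq\widetilde M_k\preceq(1+\eta_k)G_k$, gives your $\rho_k$; only the Jacobian drift survives as a separate term, and you contract $\|f^k-y\|_2$ itself rather than its square. Your route buys: no intermediate direction $\overline d_k$, no eigendecomposition of $\widetilde M_k$, a single constant $\rho_k=\frac{\lambda_k+\eta_k\lambda_{\max}(G_k)}{\lambda_k+(1-\eta_k)\lambda_{\min}(G_k)}$ quantifying both error sources simultaneously, and a contraction factor $1-\alpha\bigl(1-\rho_k-\frac{C}{1-\eta_k}\bigr)$ valid for every $\alpha\in(0,1]$ rather than under a quadratic step-size condition. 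The paper's route buys the explicit comparison with exact NGD --- which is the conceptual point of the section, namely that the only deviation from NGD is the appearance of $\Omega_k^\top\Omega_k$ inside $\widetilde M_k$ --- and error constants in which $\eta_k$ and $\lambda_k$ enter separately. Your closing uniformity concern is indeed only bookkeeping: Lemma \ref{psd-jacobian} gives $\lambda_{\min}(G_k)\ge\frac{4}{9}\lambda_{\min}(G_0)$ and Condition 2 gives $\lambda_{\max}(G_k)\le J_\Lambda^2$ uniformly on the ball, so uniformly small choices of $\eta_k$ and $\lambda_k$ yield a single $\zeta$ for all iterations.
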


\begin{proof}
The following analysis is on the events $\widetilde \Omega$.
%
%We first show, given the initialization and at the iteration $k$, with probability $(1-\delta_k)$, we have
%\be \label{theorem-ntk-1}\|f^{k+1} - y\|_2^2 \leq \zeta \|f^{k} - y\|_2^2, \ee
%where $\zeta\in (0,1)$ is a constant.
From Condition 2, we can obtain the bound of the Jacobian matrix $J(\theta)$ for any $\theta$ satisfying $\|\theta - \theta_0\|_2 \leq \frac{3\|y-f^0\|_2}{\sqrt{\lambda_{\text{min}}(G_0)}}$:
\begin{align}
\sigma_{\text{min}}(J(\theta)) & \geq \sigma_{\text{min}}(J_0) - \|J(\theta) - J_0\| \geq \frac{2}{3}\sqrt{\lambda_{\text{min}}(G_0)}, \label{sigma-estimate-1} \\
\sigma_{\text{max}}(J(\theta)) & \leq \sigma_{\text{max}}(J_0) + \|J(\theta) - J_0\|  \leq \frac{C}{3} \sqrt{\lambda_{\text{min}}(G_0)}  + \sqrt{\lambda_{\text{max}}(G_0)}:= J_\Lambda.  \label{sigma-estimate-2}
\end{align}
Remind that the direction is
\be
\begin{aligned}
d_k
%& \frac{1}{\lambda_k}J_k^\top\left ( I -  (\lambda_k I + J_k\Omega_k \Omega_k^\top J_k^\top)^{-1}J_k\Omega_k \Omega_k^\top  J_k^\top \right ) f^k \\
=& \frac{1}{\lambda_k}J_k^\top \left (Q_k \left( I - (\lambda_k I + \widetilde \Sigma_k)^{-1}  \widetilde \Sigma_k \right ) Q_k^\top \right)( f^k-y)\\
= & J_k^\top \left (Q_k ( \lambda_k I + \widetilde \Sigma_k)^{-1}  Q_k^\top \right) (f^k-y),
\end{aligned}
\vspace{-1ex}
\ee
where $ Q_k \widetilde \Sigma_k Q_k$ is the eigenvalue decomposition of $J_k\Omega_k^\top \Omega_k J_k^\top$, $Q_k$ is orthogonal and $\widetilde \Sigma_k$ is diagonal.

Hence, we have the following estimate between $d_k$ and $\overline{d}_k$ where $\overline{d}_k = J_k^\top (J_k\Omega_k^\top \Omega_kJ_k^\top)^{-1}(f^k-y)$:
\be
\begin{aligned}
\label{damping-direc-error}
&\left \|d_k -\overline{d}_k\right \|_2 \\
= & \left \|J_k^\top \left (Q_k ( \lambda_k I + \widetilde \Sigma_k)^{-1}  Q_k^\top \right) (f^k-y) - J_k^\top \left (Q_k (  \widetilde \Sigma_k)^{-1}  Q_k^\top \right) (f^k-y) \right \|_2 \\
\leq & \left \|J_k^\top \left ( Q_k \lambda_k ( \lambda_k I + \widetilde \Sigma_k)^{-1} (  \widetilde \Sigma_k)^{-1}  Q_k^\top\right ) (f^k - y) \right \|_2\\
\leq & \frac{27\lambda_k }{8(1-\eta_k)^2} \frac{J_\Lambda}{ (\lambda_{\text{min}}(G_0))^{2}} \|f^k - y\|_2 := \frac{ \mathcal{D}(\lambda_k)}{ (\lambda_{\text{min}}(G_0))^{1/2}} \|f^k - y\|_2 .
\end{aligned}
\ee
The above error can be controlled by choosing a small enough $\eta_k$, $\lambda_k$ and the above error vanishes when $\lambda_k=0$.

 According to the update sequence, we calculate the difference of network outputs of two consecutive iterations:
\begin{align}
f^{k+1} - f^k  = &  \int_{s=0}^1 \left < \fracp{f(\theta(s))}{\theta}, - \alpha d_k  \right > ds \\
 = & - \int_{s=0}^1 \left < \fracp{f(\theta_k)}{\theta},\alpha J_k^\top (J_kJ_k^\top)^{-1}(f^k - y)   \right > ds \label{ntk-estimate-1}\\
 & + \int_{s=0}^1 \left < \fracp{f(\theta(s))}{\theta},\alpha J_k^\top \left ( (J_kJ_k^\top)^{-1} - (J_k\Omega_k^\top \Omega_kJ_k^\top)^{-1}\right)(f^k - y)   \right > ds \label{ntk-estimate-2} \\
  & +  \int_{s=0}^1 \left < \fracp{f(\theta_k)}{\theta} - \fracp{f(\theta(s))}{\theta},\alpha J_k^\top (J_kJ_k^\top)^{-1}(f^k - y)   \right > ds. \label{ntk-estimate-3}\\
  & + \int_{s=0}^1 \left < \fracp{f(\theta(s))}{\theta},\alpha \left( \overline{d}_k -  d_k\right )\right > ds \label{ntk-estimate-4} ,
\end{align}
where $\theta(s) = s \theta_{k+1} + (1-s)\theta_k$.
Then, we estimate four terms (\ref{ntk-estimate-1}-\ref{ntk-estimate-4}). It is easy to see that
\be \label{estimate-1} \|\eqref{ntk-estimate-1}\|_2 =-\alpha (f^k - y). \ee

By \eqref{sigma-estimate-1}, \eqref{sigma-estimate-2} and Lemma \ref{psd-jacobian}, we have
\be \label{estimate-2}\begin{aligned}
\|\eqref{ntk-estimate-3}\|_2 &\leq  \alpha \left\| \int_{s=0}^1  \fracp{f(\theta_k)}{\theta} - \fracp{f(\theta(s))}{\theta}    ds\right\|_2\| J_k^\top (J_kJ_k^\top)^{-1}(f^k - y)\|_2 \\
& \leq \alpha \frac{2C}{3}\sqrt{\lambda_{\text{min}}(G_0)} \frac{1}{\sqrt{\lambda_{\text{min}}(G_k)}}\|f^k - y\|_2\\
& \leq \alpha \frac{2C}{3}\sqrt{\lambda_{\text{min}}(G_0)} \frac{3}{2\sqrt{\lambda_{\text{min}}(G_0)}}\|f^k - y\|_2\\
&\leq \alpha C \|f^k - y\|_2 .
\end{aligned}\ee
By Assumptions A.1-A.2, on the event $\widetilde \Omega$, we have
\be
\label{efim-diff-1}
\|J_kJ_k^\top - J_k\Omega_k^\top \Omega_kJ_k^\top\|_2 \leq \eta_k \|J_k\|^2_2.
\ee
Then, we have
\be
\label{efim-diff-2}
\|(J_kJ_k^\top)^{-1} - (J_k\Omega_k^\top \Omega_kJ_k^\top)^{-1}\|_2 = \|(J_kJ_k^\top)^{-1} (J_kJ_k^\top - J_k\Omega_k^\top \Omega_kJ_k^\top)(J_k\Omega_k^\top \Omega_kJ_k^\top)^{-1}\|_2 \leq \frac{\eta_k}{1-\eta_k} \frac{J_\Lambda^2}{\lambda_{\text{min}}(G_k)^{2}}.
\ee
This leads to:
\be
\label{estimate-3}
\begin{aligned}
\left \| \eqref{ntk-estimate-2}\right \|  & \leq  \alpha \left \|  \int_{s=0}^1  \fracp{f(\theta(s))}{\theta}ds  \right \|_2  \| J_k^\top \left ( (J_kJ_k^\top)^{-1} - (J_k\Omega_k^\top \Omega_kJ_k^\top)^{-1}\right) \|_2 \|f^k - y\|_2  \\
& \leq \alpha \frac{\eta_k}{1-\eta_k} \frac{81 J_{\Lambda}^4}{16\lambda_{\text{min}}(G_0)^2}  \|f^k - y\|_2.
\end{aligned}
\ee
By \eqref{damping-direc-error}, the term \eqref{ntk-estimate-4} can be bounded by:
\be
\label{estimate-4}
\|\eqref{ntk-estimate-4}\|_2  \leq \alpha \frac{ \mathcal{D}(\lambda_k)J_{\Lambda}}{ (\lambda_{\text{min}}(G_0))^{1/2}}  \|f^k - y\|_2.
\ee

Combining \eqref{estimate-1},  \eqref{estimate-2}, \eqref{estimate-3}, \eqref{estimate-4},  denoting $\widetilde C = C + \frac{\eta_k}{1-\eta_k} \frac{81 J_{\Lambda}^4}{16\lambda^2_{\text{min}}(G_0)} + \mathcal{D}(\lambda_k) J_{\Lambda}$, and letting $\eta_k$, $\lambda_k$ be small enough such that $\widetilde C\in(0,\frac{1}{2})$, if $\alpha \leq \frac{1-2\widetilde C}{(1+\widetilde C)^2}$, we obtain 
\be
\label{theorem-ntk-1}
\begin{aligned}
\| f^{k+1} -y \|_2^2 &=  \| f^{k} -y \|_2^2 + 2\left <f ^{k} -y, f^{k+1} - f^k \right > + \| f^{k+1} -y \|_2^2 \\
& \leq \left ( 1 - 2 \alpha + 2\alpha \widetilde C   + \alpha^2(1+\widetilde C)^2  \right)  \| f^{k} -y \|_2^2\\
& \leq (1-\alpha) \| f^{k} -y \|_2^2:= \zeta \|f^{k}-y\|_2^2,
\end{aligned}
\ee
%Summing the inequality \eqref{theorem-ntk-1}, we know that with probability $(1-\delta)\Pi_{i=0}^\infty (1-\delta_i)$ where $(1-\delta)$ on the initialization and $\Pi_{i=0}^k (1-\delta_i)$ on the sketching, we have
%
%\[ \|f^{k} - y\|_2^2 \leq \zeta^k \|f^{0} - y\|_2^2
%\]
and this completes the proof.
\end{proof}

\begin{lemma}
\label{lemma-iteration}
If Conditions 1 and 2 hold and $\lambda_{\text{min}}(G_k) \geq \frac{4}{9}{\lambda_{\text{min}}(G_0)}$, we have
\[ \|\theta_{k+1} - \theta_0\|_2 \leq \frac{3\|y-f^0\|_2}{\sqrt{\lambda_{\text{min}}(G_0)}}.\]
\end{lemma}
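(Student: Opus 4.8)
The plan is to bound the total displacement $\|\theta_{k+1}-\theta_0\|_2$ by summing the per-step displacements and controlling each one through the residual $\|f^j-y\|_2$, which decays geometrically by Lemma~\ref{main-lemma-ntk}. First I would apply the triangle inequality to the update rule $\theta_{j+1}=\theta_j-\alpha d_j$ to obtain $\|\theta_{k+1}-\theta_0\|_2\le\alpha\sum_{j=0}^{k}\|d_j\|_2$, reducing the problem to a uniform bound on $\|d_j\|_2$.

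The key technical step is an operator-norm estimate for each $d_j=J_j^\top(\lambda_j I+\widetilde M_j)^{-1}(f^j-y)$, where $\widetilde M_j=J_j\Omega_j^\top\Omega_jJ_j^\top$. I would first translate Assumption A.1 into the Loewner sandwich $(1-\eta_j)G_j\preceq\widetilde M_j\preceq(1+\eta_j)G_j$, exactly as in the proof of Lemma~\ref{diffbhatb}, using the SVD $J_j^\top=N_j\Sigma_jV_j$ together with $G_j=V_j^\top\Sigma_j^2V_j$. Setting $P_j=(\lambda_jI+\widetilde M_j)^{-1}$ and using $G_j\preceq\frac{1}{1-\eta_j}(\lambda_jI+\widetilde M_j)$, conjugation by the symmetric $P_j$ gives $P_jG_jP_j\preceq\frac{1}{1-\eta_j}P_j$, so that $\|d_j\|_2^2\le\lambda_{\max}(P_jG_jP_j)\|f^j-y\|_2^2\le\frac{\|f^j-y\|_2^2}{(1-\eta_j)^2\lambda_{\min}(G_j)}$. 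Invoking the hypothesis $\lambda_{\min}(G_j)\ge\frac49\lambda_{\min}(G_0)$ then yields $\|d_j\|_2\le\frac{3}{2(1-\eta_j)\sqrt{\lambda_{\min}(G_0)}}\|f^j-y\|_2$. (As a sanity check, the limiting case $\Omega_j=I,\lambda_j=0,\eta_j=0$ recovers the pure-NGD bound $\|d_j\|_2\le\sigma_{\min}(J_j)^{-1}\|f^j-y\|_2$.)

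Next I would insert the geometric decay $\|f^j-y\|_2\le(1-\alpha)^{j/2}\|f^0-y\|_2$ supplied by Lemma~\ref{main-lemma-ntk} with $\zeta=1-\alpha$, and sum the series $\sum_{j\ge0}(1-\alpha)^{j/2}=\frac{1+\sqrt{1-\alpha}}{\alpha}$. This gives $\|\theta_{k+1}-\theta_0\|_2\le\frac{3(1+\sqrt{1-\alpha})}{2(1-\bar\eta)\sqrt{\lambda_{\min}(G_0)}}\|f^0-y\|_2$ with $\bar\eta=\sup_j\eta_j$. The target constant $3$ closes precisely when $\frac{1+\sqrt{1-\alpha}}{2(1-\bar\eta)}\le1$, i.e. $\sqrt{1-\alpha}\le1-2\bar\eta$, equivalently $\alpha\ge4\bar\eta(1-\bar\eta)$; this is consistent with the regime of Theorem~\ref{ntk-theorem}, where $\eta_j$ is taken small (large sketch dimension) relative to the step size.

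The subtlety I expect to be the main obstacle is \emph{circularity} rather than any single inequality. Applying Lemma~\ref{main-lemma-ntk} at every $j\le k$ presupposes that $\theta_j$ and $\theta_{j+1}$ already lie in the ball of radius $\frac{3\|y-f^0\|_2}{\sqrt{\lambda_{\min}(G_0)}}$, and the eigenvalue hypothesis $\lambda_{\min}(G_j)\ge\frac49\lambda_{\min}(G_0)$ used above is itself a consequence of ball membership via Lemma~\ref{psd-jacobian}. The clean resolution is to fold all three lemmas into a single induction on $k$: assuming ball membership and linear decay up to step $k$, Lemma~\ref{psd-jacobian} supplies the eigenvalue bound, Lemma~\ref{main-lemma-ntk} advances the decay, and the displacement computation above re-establishes ball membership at $k+1$. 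I would therefore make explicit in the write-up that the summation over $j$ rests on this inductive hypothesis, so the single-index statement of the lemma is read as the inductive step that propagates the invariant.
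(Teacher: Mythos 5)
Your proof is correct and shares the paper's overall skeleton: triangle inequality over the iterates, a per-step bound on $\|d_j\|_2$ proportional to $\|f^j-y\|_2/\sqrt{\lambda_{\min}(G_j)}$, the geometric decay supplied by Lemma~\ref{main-lemma-ntk}, and summation of the resulting geometric series against the hypothesis $\lambda_{\min}(G_j)\geq\frac{4}{9}\lambda_{\min}(G_0)$. Where you genuinely differ is in the per-step estimate: the paper splits $d_i$ as $\overline d_i + (d_i-\overline d_i)$ with $\overline d_i = J_i^\top(J_i\Omega_i^\top\Omega_iJ_i^\top)^{-1}(f^i-y)$ and reuses the damping-error bound \eqref{damping-direc-error}, which produces the factor $\left(1+\eta_k+\mathcal{D}(\lambda_k)\right)$, whereas you bound $d_j = J_j^\top(\lambda_j I+\widetilde M_j)^{-1}(f^j-y)$ in one shot through the Loewner ordering $(1-\eta_j)G_j\preceq\widetilde M_j$ and conjugation by $P_j=(\lambda_jI+\widetilde M_j)^{-1}$. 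Your route is slightly cleaner and tighter: the regularizer $\lambda_j$ only shrinks $P_j$, so it drops out of the bound entirely rather than contributing a term $\mathcal{D}(\lambda_k)$ that must separately be assumed small. You are also more careful about constants: the paper's final display closes only if $\left(1+\eta_k+\mathcal{D}(\lambda_k)\right)\frac{1+\sqrt{1-\alpha}}{2}\leq 1$, a condition left implicit there, while you extract the explicit requirement $\alpha\geq 4\bar\eta(1-\bar\eta)$, which is compatible with the small-$\eta_k$ regime of Theorem~\ref{ntk-theorem}. Finally, your remark on circularity is apt and matches the paper's actual logic: although the lemma's hypothesis names only $\lambda_{\min}(G_k)$, the proof sums over $i=0,\dots,k$ and needs both $\lambda_{\min}(G_i)\geq\frac{4}{9}\lambda_{\min}(G_0)$ and the residual decay at every such $i$; this invariant is exactly what the contradiction/induction argument in the proof of Theorem~\ref{ntk-theorem} propagates, so reading the lemma as the inductive step, as you do, is the right interpretation.
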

%\fi

\begin{proof}
The distance of the parameters to the initialization can be bounded by:
\be
\label{sigma-estimate-lemma-2}
\begin{aligned}
  \|\theta_{k+1} - \theta_0 \|_2 & \leq \alpha \sum_{i=0}^k \left( \|J_i^\top (J_i\Omega_i^\top \Omega_iJ_i^\top)^{-1}(f^i-y)  \|_2 + \|d_i - J_i^\top (J_i\Omega_i^\top \Omega_iJ_i^\top)^{-1}(f^i-y)\|_2\right ) \\
    & \leq  \alpha\sum_{i=0}^k \frac{\left(1+\eta_k+\mathcal{D}(\lambda_k) \right)\|f^i - y \|_2}{\sqrt{\lambda_\mathrm{min}(G_i)}} \\
    & \leq \alpha\sum_{i=0}^k \frac{(1+\eta_k + \mathcal{D}(\lambda_k)) (1-\alpha)^{i/2}\|f^0 - y \|_2}{\sqrt{ \frac{4}{9}{\lambda_{\text{min}}(G_0)}}}\\
    &\leq \frac{3 \|y-f^0\|_2}{\sqrt{\lambda_{\text{min}}(G_0)}}.
\end{aligned}
\ee
\end{proof}
Theorem \ref{ntk-theorem} can be proved by using Lemmas \ref{psd-jacobian}, \ref{main-lemma-ntk} and \ref{lemma-iteration}.
\begin{proof}
The analysis is on the event $\widetilde \Omega$ where Conditions 1 and 2 hold. We prove Theorem \ref{ntk-theorem} by contradiction. Suppose that \be \label{bound-param}  \|\theta_k - \theta_0\|_2 \leq \frac{3\|y-f^0\|_2}{\sqrt{\lambda_{\text{min}}(G_0)}} \ee  does not hold for all iterations. Let (\ref{bound-param}) hold at the iterations $k = 0,1,\dots, \hat k$ but not $\hat k +1$. Then from Lemma \ref{main-lemma-ntk} and \ref{lemma-iteration} we know that there exists $0 < \tilde k \leq \hat k$ such that $\lambda_{\text{min}}(G_{\tilde k}) < \frac{4}{9}{\lambda_{\text{min}}(G_0)}$. However, by Lemma \ref{psd-jacobian}, since $\|\theta_k - \theta_0\|_2 \leq \frac{3\|y-f^0\|_2}{\sqrt{\lambda_{\text{min}}(G_0)}}$ holds for $k = 0,1,\dots, \hat k$, we have $\lambda_{\text{min}}(G_{\tilde k}) < \frac{4}{9}{\lambda_{\text{min}}(G_0)}$, which is a contradiction. Therefore, (\ref{bound-param}) hold for all iterations. By Lemma \ref{main-lemma-ntk}, this illustrates that $ \|f^{k+1} - y\|_2^2 \leq \zeta \|f^k - y\|_2^2$ for all iterations. 

Hence, on the event $\widetilde \Omega$ such that $\Prob(\widetilde \Omega) = (1-\delta)\Pi_{k=0}^\infty (1-\delta_k)$, the following conclusion holds:
 \be \|f^{k} - y\|_2^2 \leq \zeta^k \|f^0 - y\|_2^2\ee
 and this completes the proof.
\end{proof}

%%%%%%%%%%%%%%%%%%%%%%%%%%%%%%%%%%%%%%%%%%%%%%%%%%%%%%%%%%%%%%%%%%%%%%%%%%%%%%%
%%%%%%%%%%%%%%%%%%%%%%%%%%%%%%%%%%%%%%%%%%%%%%%%%%%%%%%%%%%%%%%%%%%%%%%%%%%%%%%
% DELETE THIS PART. DO NOT PLACE CONTENT AFTER THE REFERENCES!
%%%%%%%%%%%%%%%%%%%%%%%%%%%%%%%%%%%%%%%%%%%%%%%%%%%%%%%%%%%%%%%%%%%%%%%%%%%%%%%
%%%%%%%%%%%%%%%%%%%%%%%%%%%%%%%%%%%%%%%%%%%%%%%%%%%%%%%%%%%%%%%%%%%%%%%%%%%%%%%

%%%%%%%%%%%%%%%%%%%%%%%%%%%%%%%%%%%%%%%%%%%%%%%%%%%%%%%%%%%%%%%%%%%%%%%%%%%%%%%
%%%%%%%%%%%%%%%%%%%%%%%%%%%%%%%%%%%%%%%%%%%%%%%%%%%%%%%%%%%%%%%%%%%%%%%%%%%%%%%

\end{document}